\newtheorem{thm}{Theorem}[subsection]
\newtheorem{cor}[thm]{Corollary}
\newtheorem{lem}[thm]{Lemma}
\newtheorem{prop}[thm]{Proposition}
\theoremstyle{definition}
\newtheorem{exm}[thm]{Example}
\newtheorem{conj}[thm]{Conjecture}
\newtheorem{prob}[thm]{Problem}
\theoremstyle{remark}
\newtheorem{rem}[thm]{Remark}
\newcommand{\Ext}{\operatorname{Ext}}
\newcommand{\Hom}{\operatorname{Hom}}
\newcommand{\sd}{\Sigma_d}
\newcommand{\HH}{\operatorname{H}}
 \numberwithin{equation}{subsection}
\begin{document}
\title[Specht module cohomology]
{\bf Cohomology and generic cohomology of Specht modules for the symmetric group}

\author{\sc David J. Hemmer}
\address
{Department of Mathematics\\ University at Buffalo, SUNY \\
244 Mathematics Building\\Buffalo, NY~14260, USA}
\thanks{Research of the  author was supported in part by NSF
grant  DMS-0556260} \email{dhemmer@math.buffalo.edu}

\date{January 2009}

\subjclass[2000]{Primary 20C30, Secondary 20G05, 20G10}

\begin{abstract}

Cohomology of Specht modules for the symmetric group can be equated in low degrees with corresponding  cohomology for the Borel subgroup $B$  of the general linear group $GL_d(k)$, but this has never been exploited to prove new symmetric group results. Using work of Doty on the submodule structure of symmetric powers of the natural $GL_d(k)$-module together with work of Andersen on cohomology for $B$ and its Frobenius kernels, we prove new results about $\HH^i(\Sigma_d, S^\lambda)$. We  recover work of James in the case $i=0$. Then we prove two stability theorems, one of which is a ``generic cohomology" result for Specht modules equating cohomology of $S^{p\lambda}$ with $S^{p^2\lambda}.$  This is the first theorem we know relating Specht modules $S^\lambda$ and $S^{p\lambda}$. The second result equates cohomology of $S^\lambda$ with $S^{\lambda + p^a\mu}$ for large $a$.

\end{abstract}

\maketitle
\section{Introduction}
\label{section: Introduction}
\subsection{}
For an algebraic group $G$ and a $G$-module $M$, the Frobenius morphism on $G$ lets one define a new $G$ module  $M^{(1)}$, the Frobenius twist of $M$. If $G=GL_n(k)$ and $L(\lambda)$ is an irreducible module then $L(\lambda)^{(1)} \cong L(p\lambda)$. Thus theorems about $G$-modules which involve multiplying partitions by $p$ arise quite naturally and can  be explained in terms of the Frobenius map.

In contrast, suppose $S^\lambda$ is a Specht module for the symmetric group $\Sigma_d$, where $\lambda$ is a partition of $d$, denoted $\lambda \vdash d$. Then $p\lambda \vdash pd$, so $S^{p\lambda}$ is a Specht module for an entirely different group, $\Sigma_{pd}$,  and there is no evident relation between the two modules. Indeed we know of no theorems involving both $S^{\lambda}$ and $S^{p\lambda}$. This paper will prove the first such theorem, although an explanation involving only the symmetric group eludes us!

    I would like to thank the anonymous referee for an extensive report which vastly improved the exposition, discovered an error  in the proof of Lemma \ref{lem: E11=0} and provided a correct version.

\subsection{}

 Let $k$ be an algebraically closed field of characteristic $p\geq 3$. For a partition $\lambda \vdash d$ let $S^\lambda$ denote the corresponding Specht module for the symmetric group $\sd$, and let $S_\lambda$ be its linear dual.  In  characteristic two every Specht module is also a dual Specht module, and the problem of calculating cohomology is quite different and seems more difficult. Proposition \ref{prop: KNresultequatescohSpectwithGLd} below, which equates symmetric group cohomology with that of the general linear group, only holds through degree $2p-4$,  so does not apply even to $\HH^1(\sd, S^\lambda)$ when $p=2$. Thus we will restrict to the odd characteristic case for this paper.

 Much is known about the low degree cohomology groups $\HH^i(\sd, S_\lambda)$. In \cite{BKMdualspecht} (where  again only odd characteristic is considered), it is shown that this cohomology vanishes in degrees $1 \leq i \leq p-3.$  For $p=3$ a complete description of the nonzero cohomology was given for $i=1, 2$ in \cite[Thms. 2.4, 4.1]{BKMdualspecht}.

In contrast little is known about the cohomology $\HH^i(\Sigma_d, S^\lambda)$ of Specht modules. For $i=0$ this was computed by James  \cite[Thm. 24.4]{Jamesbook}. It is at most one-dimensional, and explicit conditions are given on $\lambda$ for it to be nonzero, see Theorem \ref{thm:JamestheoremonHom}
below.  In  \cite{HNcohomologyofspechtmodules}
the cohomology $\HH^i(\Sigma_d, S^\lambda)$ for $0 \leq i \leq p-2$ is shown to agree with certain  $B$ cohomology in degree $i+\binom{d}{2}$,
where $B$ is the Borel subgroup of $GL_d(k)$. However this high degree  $B$ cohomology has not been computed, so no new symmetric group results were obtained.

In our approach the relevant cohomology for $B$ and its Frobenius kernels can be computed, and so  new results on the cohomology $\HH^1(\Sigma_d, S^\lambda)$ of Specht modules are obtained. We prove some stability theorems that generalize the results of James for $\HH^0(\sd, S^\lambda)$ and are inspired by the known results for two-part partitions. In particular we prove a ``generic cohomology" theorem that $\HH^1(\Sigma_{p^ad}, S^{p^a\lambda})$  stabilizes for $a\geq 1$. There is a corresponding result for algebraic groups where one twists by the Frobenius automorphism. However for the symmetric group there is no Frobenius automorphism, and it is quite mysterious why any theorem should relate Specht modules $S^{\lambda}$ and $S^{p\lambda}$, which are apparently unrelated modules for two different groups!

\section{Notation and preliminaries}
\label{section: Notation and preliminaries}
\subsection{}
Although our application is to symmetric group representation theory, the actual work will be done within the general linear group theory. A basic reference for key results and also for our notation is \cite{jantzenbook2nded}. For information on the Schur algebra see \cite{Greenpolygln}. We will also draw extensively from the paper \cite{AndersenExtensionsofmodulesforalgebraicgroups} of Andersen.

Recall $k$ is an algebraically closed field of characteristic $p\geq 3$ and let $G=\operatorname{GL}_{n}(k)$ be the general linear group. Let $B$ (resp. $B_+$) be the Borel subgroup of lower (resp. upper) triangular matrices and $T$ the torus of diagonal matrices. Let $R$ denote the root system with respect to $T$, with associated inner product $\langle-, - \rangle$. Let $X(T) \cong {\mathbb Z}^n$ denote the weight lattice and $S=\{\alpha_1, \alpha_2, \ldots, \alpha_{n-1}\}$ a set of simple roots such that $B$ corresponds to the negative roots. For $\alpha \in S$ the corresponding coroot is $\alpha^\vee$ and the corresponding reflection on $X(T)$ is $s_\alpha$. Let $\rho$ denote half the sum of the positive roots.

The set of dominant weights is $X(T)_+=\{\lambda \in X(T) \mid \langle \alpha^\vee, \lambda \rangle \geq 0 \text{ for all } \alpha \in S.\}$ Let $\leq$ denote the ordering on $X(T)$ given by $\lambda \leq \mu$ if and only if $\mu - \lambda$ is a linear combination of positive roots with nonnegative coefficients. The set of $p^r$-restricted weights is denoted $X_r(T)=\{\lambda \in X(T) \mid 0 \leq \langle \alpha^\vee, \lambda \rangle < p^r \text{ for all } \alpha \in S\}$.

For a module $M$ and a simple module $S$, the composition multiplicity of $S$ in $M$ will be denoted $[M : S]$. For a $GL_n(k)$-module $M$, the module $M^{(r)}$ will denote the $r$th Frobenius twist of $M$. The $r$th Frobenius kernel of $B$ is denoted $B_r$, see \cite{jantzenbook2nded} for definitions. We will sometimes use the fact that $B/B_r$ is isomorphic to $B$ and the modules can be identified using the $r$th power of the Frobenius twist.

\subsection{}The Schur algebra $S(n,d)$ is the finite-dimensional associative $k$-algebra
$\text{End}_{k\Sigma_{d}}(V^{\otimes d})$ where $V\cong k^n$ is the
natural representation of $G$.  Excellent references for representation theory of the  Schur algebra are the books \cite{GreenPolyGLn2ndedwithappendix} and \cite{Martinbook}. The category $M(n,d)$ of polynomial $G$-modules of a fixed
degree $d\geq 0$ is equivalent to the category of modules for $S(n,d)$.

Simple $S(n,d)$-modules are in bijective correspondence with the set $\Lambda^+(n,d)$ of
partitions of $d$ with at most $n$ parts, and are denoted by $L(\lambda)$. Note
that one can also identify $\Lambda^+(n,d)$ as the set of dominant polynomial
weights of $G$ of degree $d$. For $\lambda\in \Lambda^+(n,d)$,
let $H^{0}(\lambda)=\text{ind}_{B}^{G} \lambda$ be the induced module and
let $V(\lambda)$ be the Weyl module. Often we will write $H^0(d)$ rather than $H^0((d,0,\ldots, 0))$ and similarly $L(d)$. Note we are using $\lambda$ here to denote the one-dimensional $B$ module of weight $\lambda$. For $\lambda, \mu \in \Lambda^+(n,d)$ let $\unrhd$ denote the usual dominance order \cite[Def. 1.4.2]{Martinbook}.

\subsection{}We recall  two results about $\Ext$ groups for $GL_n(k)$-modules. The first is that for two $S(n,d)$-modules $M,N$, the $\Ext^i$ groups are the same whether we work  in the category of all $G$-modules or of $S(n,d)$-modules. The second is that for $n>d$ the category of modules for $S(n,d)$ is equivalent to that of $S(d,d)$, where the equivalence  ``preserves"  $L(\lambda)$, $H^0(\lambda)$ and $V(\lambda)$ for $\lambda \vdash d$. (Although the modules certainly have different dimensions, for example $V(1,1,1)$ is a one-dimensional $S(3,3)$ module but a four-dimensional $S(4,3)$ module.) Both results are special cases of a more general result regarding truncated categories (cf. \cite[Chpt. A]{jantzenbook2nded}).

\begin{prop}\cite[A.10, A.18]{jantzenbook2nded}
\label{prop: extagrees  in S or G category} Let $M$ and $N$  be $S(n,d)$-modules, and hence also $G$-modules. Then for all $i \geq 0$
$$\Ext^i_{S(n,d)}(M,N) \cong \Ext^i_G(M,N).$$
\end{prop}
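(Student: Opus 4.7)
The plan is to identify the category of $S(n,d)$-modules with the truncated subcategory $\mathcal{C}[\pi]$ of rational $G$-modules whose composition factors are of the form $L(\mu)$ with $\mu \in \pi := \Lambda^+(n,d)$, and then deduce the equality of $\Ext$ groups from the general truncation machinery of \cite[Ch.~A]{jantzenbook2nded}.

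First I would verify that $\pi$ is \emph{saturated}: if $\lambda \in \pi$ and $\mu \in X(T)_+$ satisfies $\mu \leq \lambda$, then $\mu \in \pi$. Since every positive root of $GL_n$ has coordinate sum zero, writing $\lambda - \mu$ as a nonnegative integer combination of positive roots forces $|\mu| = |\lambda| = d$, and dominance then makes $\mu$ a partition with at most $n$ parts. Saturation guarantees that $\mathcal{C}[\pi]$ is closed under subobjects, quotients and extensions inside rational $G$-mod.

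Next, introduce the truncation functor $O_\pi$ sending a $G$-module $V$ to its largest submodule lying in $\mathcal{C}[\pi]$; this is right adjoint to the inclusion $\mathcal{C}[\pi] \hookrightarrow G\text{-mod}$. Given $N \in \mathcal{C}[\pi]$, take an injective resolution $N \to I^\bullet$ in rational $G\text{-mod}$. The adjunction shows immediately that each $O_\pi(I^k)$ is injective in $\mathcal{C}[\pi]$, because $\Hom_{\mathcal{C}[\pi]}(-, O_\pi(I^k)) \cong \Hom_G(-, I^k)\big|_{\mathcal{C}[\pi]}$ is exact. Applying $\Hom_G(M,-) = \Hom_{\mathcal{C}[\pi]}(M,-) \circ O_\pi$ for $M \in \mathcal{C}[\pi]$ would then give
$$\Ext_G^i(M,N) = H^i(\Hom_G(M,I^\bullet)) \cong H^i(\Hom_{\mathcal{C}[\pi]}(M, O_\pi(I^\bullet))) = \Ext_{\mathcal{C}[\pi]}^i(M,N),$$
provided the truncated complex $N \to O_\pi(I^\bullet)$ is itself exact.

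The main obstacle is exactly this last exactness, since $O_\pi$ is only left exact in general. I would address it by invoking the standard fact that each rational $G$-injective $I^k$ admits a good (i.e.\ $H^0(\nu)$) filtration, and that $O_\pi$ cleanly truncates such a filtration to the layers indexed by $\nu \in \pi$; the saturation of $\pi$ ensures that this truncation commutes with the differentials and preserves exactness of the complex. Once the identity $\Ext_G^i(M,N) \cong \Ext_{\mathcal{C}[\pi]}^i(M,N)$ is in hand, transporting through the category equivalence $\mathcal{C}[\pi] \simeq S(n,d)\text{-mod}$ completes the proof.
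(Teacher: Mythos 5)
Your proposal reconstructs the truncated-category argument of Jantzen's Appendix A, which is exactly what the paper cites for this proposition (it gives no independent proof), so the route is the same. Two points in the write-up deserve tightening. In the saturation check, $|\mu|=d$ together with dominance of $\mu$ does not by itself force $\mu_n\ge 0$; you also need the observation that subtracting positive roots of $GL_n$ can only decrease the last coordinate, so $\mu_n\ge\lambda_n\ge 0$ and hence $\mu$ is a genuine partition in $\Lambda^+(n,d)$. More seriously, the closing sentence about $O_\pi$ ``commuting with the differentials and preserving exactness'' asserts rather than proves the crucial vanishing $R^jO_\pi(N)=0$ for $N\in\mathcal{C}[\pi]$, $j>0$. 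The actual content, which you are gesturing at correctly, is that saturation of $\pi$ lets one arrange any good filtration of a $G$-injective $I$ so that $O_\pi I$ is the bottom slice with factors $H^0(\nu)$, $\nu\in\pi$, while $I/O_\pi I$ is filtered by $H^0(\nu)$, $\nu\notin\pi$; this makes $O_\pi$ exact on the relevant short exact sequences and hence on the truncated injective resolution. With those two steps filled in, your argument matches the one Jantzen carries out.
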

The next proposition will let us equate $\sd$ cohomology with that of $GL_n(k)$ for various $n \geq d$.

\begin{prop}
 \label{prop: ext agrees GLnGLd}
 \cite[6.5g]{Greenpolygln} Suppose $n>d$. Then there is an idempotent $e \in S(n,d)$ such that $eS(n,d)e \cong S(d,d)$ and the functor taking $M$ to $eM$ is an equivalence of categories from mod-$S(n,d)$ to mod-$S(d,d)$ mapping $L(\lambda)$ to $L(\lambda)$ and similarly for $H^0(\lambda)$ and $V(\lambda)$.
\end{prop}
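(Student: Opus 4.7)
The plan is to exploit the weight-space decomposition of the Schur algebra. Recall $S(n,d) = \bigoplus_{\mu, \nu \in \Lambda(n,d)} \xi_\mu S(n,d) \xi_\nu$, where the $\xi_\mu$ are pairwise orthogonal weight idempotents summing to $1$ and $\xi_\mu M = M_\mu$ is the $\mu$-weight space of any $S(n,d)$-module $M$. The essential observation is that for $n>d$ every partition of $d$ has at most $d$ nonzero parts, so $\Lambda^+(n,d) = \Lambda^+(d,d)$ under the obvious embedding padding with zeros.

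First I would take $e = \sum_\mu \xi_\mu$, where the sum runs over those $\mu \in \Lambda(n,d)$ supported in the first $d$ coordinates; such $\mu$ biject canonically with $\Lambda(d,d)$. Using Green's double-coset basis of $S(n,d)$, indexed by $\Sigma_d$-orbits in $I(n,d) \times I(n,d)$ of common weight type, I would verify that the basis elements of $eS(n,d)e$ are precisely those indexed by $\Sigma_d$-orbits in $I(d,d) \times I(d,d)$, and that Green's product formula restricts verbatim. This yields $eS(n,d)e \cong S(d,d)$ as $k$-algebras.

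Next I would show $e$ is a full idempotent, i.e., $S(n,d)\, e\, S(n,d) = S(n,d)$; equivalently $eL(\lambda) \neq 0$ for every $\lambda \in \Lambda^+(n,d)$. Any such $\lambda$ has at most $d$ parts, so $\xi_\lambda$ is a summand of $e$ and the $\lambda$-weight space of $L(\lambda)$ is nonzero. A standard Morita-type argument (cf.\ \cite[A.10]{jantzenbook2nded}) then shows that $M \mapsto eM$ is an equivalence from mod-$S(n,d)$ to mod-$eS(n,d)e$, with quasi-inverse $N \mapsto S(n,d)e \otimes_{eS(n,d)e} N$; via step one the target is identified with mod-$S(d,d)$.

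For the preservation of the three families, let $\lambda \vdash d$. The equivalence sends the simple $L(\lambda) \in$ mod-$S(n,d)$ to a simple $S(d,d)$-module, and since it preserves each weight space $M_\mu$ for $\mu$ supported in the first $d$ coordinates, the image has highest weight $\lambda$ and hence equals $L(\lambda)$ over $S(d,d)$. The same reasoning identifies the images of $V(\lambda)$ and $H^0(\lambda)$, since each is characterized among polynomial representations by its highest weight together with a universal property (as a Weyl module, respectively an induced module from the one-dimensional $B$-weight $\lambda$) that the equivalence preserves. The main technical hurdle is step one, the combinatorial matching of $eS(n,d)e$ with $S(d,d)$ basis-by-basis on Green's basis; everything else follows from generalities about Morita equivalences and highest-weight structure.
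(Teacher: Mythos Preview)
The paper does not give its own proof of this proposition; it simply cites Green \cite[6.5g]{Greenpolygln} and states the result. So there is no in-paper argument to compare against.

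Your sketch is essentially Green's own argument and is sound. The idempotent $e=\sum_{\mu\in\Lambda(d,d)}\xi_\mu$ is exactly the one Green uses, and the identification $eS(n,d)e\cong S(d,d)$ via the double-coset basis is the content of \cite[6.5]{Greenpolygln}. Your fullness argument is correct: every $\lambda\in\Lambda^+(n,d)$ has at most $d$ parts, so $\xi_\lambda$ occurs in $e$ and $eL(\lambda)\supseteq L(\lambda)_\lambda\neq 0$, whence $S(n,d)eS(n,d)=S(n,d)$ and $M\mapsto eM$ is a Morita equivalence.

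The only place I would tighten things is the final paragraph. Invoking ``universal properties that the equivalence preserves'' for $V(\lambda)$ and $H^0(\lambda)$ is vague, since those universal properties are phrased in terms of $B$ and $G$, not intrinsically in mod-$S(n,d)$. A cleaner route is: the equivalence is exact and preserves the labelling of simples by highest weight, hence it is an equivalence of highest-weight (quasi-hereditary) categories with the same poset $\Lambda^+(d,d)$; standard and costandard modules are then automatically matched. Alternatively, one can compute directly that $eH^0(\lambda)$ (respectively $eV(\lambda)$) has the correct dimension and highest weight by restricting the semistandard-tableaux basis to tableaux with entries in $\{1,\dots,d\}$, which is what Green does.
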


\section{$B$-Cohomology and Spectral Sequences}
\label{sec: B-cohomologyandspectralsequences}
\subsection{}
As we will see in the next section, low degree Specht module cohomology can be equated with certain $B$ cohomology, and it is in the setting of $B$ cohomology that our work will be done.
In this section we collect the results on $B$ and $B_r$ cohomology that we will need, and recall two important spectral sequences that we will use. The first describes extensions between $B$ modules that have been twisted. The second relates cohomology for $B$ and $B_r$.

First we recall that $L(\lambda)$ has simple head as a $B$-module and, if $\lambda \in X_r(T)$,  also as a $B_r$-module.

\begin{prop}
\label{prop: HeadofLlambdaasBandBrmodule}
Let $\lambda \in X(T)_+$ and $\nu \in X(T)$. Then:
\begin{itemize}
  \item[(a)] $\Hom_B(L(\lambda), \nu) \cong$ $\begin{cases}  k&\text{if } \lambda=\nu\\ 0 & \text{otherwise.} \end{cases}$

      \item[(b)]
          $\Hom_{B_r}(L(\lambda), \nu) \cong$ $\begin{cases}  p^r\nu_1 &\text{if } \nu=\lambda+p^r\nu_1\\ 0 & \text{otherwise.} \end{cases}$ where the isomorphism is as $B$-modules.
\end{itemize}
\end{prop}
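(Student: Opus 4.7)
The plan is to derive each part from Frobenius reciprocity together with the structural facts about heads of $L(\lambda)$ recalled just before the statement.

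For (a), I apply the standard (tensor) Frobenius reciprocity
\[
\Hom_B(L(\lambda), \nu) \;\cong\; \Hom_G(L(\lambda), \operatorname{ind}_B^G \nu) \;=\; \Hom_G(L(\lambda), H^0(\nu)).
\]
If $\nu \notin X(T)_+$ then $H^0(\nu) = 0$ and the Hom vanishes. Otherwise $H^0(\nu)$ has simple $G$-socle $L(\nu)$, so $\Hom_G(L(\lambda), H^0(\nu))$ is nonzero, and then one-dimensional, precisely when $\lambda = \nu$.

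For (b) I work under the hypothesis $\lambda \in X_r(T)$, the case in which $L(\lambda)|_{B_r}$ has a simple head (as recalled above); call that head $Q$. Since $B$ normalizes $B_r$, the $B_r$-radical of $L(\lambda)$ is $B$-stable, so $Q$ inherits a $B$-module structure and is a one-dimensional $B$-quotient of $L(\lambda)$. By part (a) any such quotient coincides with the one-dimensional $B$-module of weight $\lambda$, so $Q \cong \lambda$ as $B$-modules, and every $B_r$-map $L(\lambda) \to \nu$ factors through $Q$. This gives
\[
\Hom_{B_r}(L(\lambda), \nu) \;\cong\; \Hom_{B_r}(\lambda, \nu)
\]
as $B$-modules. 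A $B_r$-equivariant map between the one-dimensional weight modules $\lambda$ and $\nu$ must match $T_r$-characters, so exists iff $\lambda \equiv \nu \pmod{p^r X(T)}$, i.e.\ iff $\nu = \lambda + p^r \nu_1$ for some $\nu_1 \in X(T)$; $U_r$-equivariance is automatic since $U_r$ acts trivially on each side. In the nonzero case the Hom is one-dimensional, with residual $B$-action by the character $\nu - \lambda = p^r\nu_1$, identifying it with the $B$-module $p^r\nu_1$ of the statement.

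The main obstacle is the ``simple head of $L(\lambda)|_{B_r}$'' input used in the second paragraph; this is the one non-formal ingredient and is the reason $\lambda \in X_r(T)$ is needed. It is a standard fact from Jantzen's book, but without it the reduction to $\Hom_{B_r}(\lambda,\nu)$ collapses, and one would instead have to enumerate all weights of $L(\lambda)$ congruent to $\lambda$ modulo $p^r X(T)$ and examine their $U_r$-invariants.
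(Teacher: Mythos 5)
Your proof of (a) is exactly the paper's: both apply Frobenius reciprocity to reduce $\Hom_B(L(\lambda),\nu)$ to $\Hom_G(L(\lambda),H^0(\nu))$ and invoke the simple socle of $H^0(\nu)$. For (b), however, the paper does not give an argument at all: it simply cites Andersen, \cite[Eq.~3.1]{AndersenExtensionsofmodulesforalgebraicgroups}. Your argument is a correct self-contained proof in its place. The reduction to the $B_r$-head $Q$ of $L(\lambda)$ (valid since $Q$ is necessarily one-dimensional, $\nu$ is $B_r$-simple, and the $B_r$-radical of $L(\lambda)$ is $B$-stable because $B$ normalizes $B_r$), the identification $Q\cong\lambda$ via part (a), and the final weight bookkeeping $\nu-\lambda=p^r\nu_1$ as a $B/B_r$-character are all sound. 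You also correctly flag that the simple-$B_r$-head fact requires $\lambda\in X_r(T)$; the paper records this restriction in the sentence preceding the proposition but omits it from the proposition's hypotheses (and all of its later uses of part (b) do take $\lambda_{(0)}\in X_1(T)$), so your explicit attention to it is a clarification rather than a deviation. In short: (a) matches the paper, and (b) substitutes a short direct argument for the literature citation, gaining transparency at essentially no extra cost.
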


\begin{proof}
Part (a) follows from Frobenius reciprocity (Proposition \ref{prop: frobenius reciprocityforH0lambda}(a) below) and the fact
\cite[II.2.3]{jantzenbook2nded} that $H^0(\lambda)$ has simple socle $L(\lambda)$. Part (b) is Equation 3.1 in \cite{AndersenExtensionsofmodulesforalgebraicgroups}.
\end{proof}

Calculating higher degree cohomology for $B$ or $B_r$ is a difficult problem and a subject of active research. Fortunately we need only a few results in degree one. The first part of the following is immediate from Lemma 2.2 in \cite{CPSvDK}, where a much stronger result is proven. We denote by $St_r$ the $r$th Steinberg module $L((p^r-1)\rho)$.

\begin{prop}
\label{prop: degree1Bcoho}
Let $\lambda \in X(T)_+$ and $\nu \in X(T)$. Then:

\begin{itemize}
\item[(a)]If $\lambda \ngeq \mu$ then $\Ext^1_B(L(\lambda), \mu)=0$.
\item[(b)] \cite[II.12.1]{jantzenbook2nded} $\HH^1(B_r, k)=0.$
    \item[(c)] \cite[Prop. 3.2]{AndersenExtensionsofmodulesforalgebraicgroups} Suppose further that $\lambda, \nu \in X_r(T).$ Then the weights of $\Ext^1_{B_r}(L(\lambda), \nu)$, as a $B$-module, are contained in the set
        $$\{p^r \xi \mid \lambda - \nu+p^r\xi - (p^r-1)\rho \text{ is a weight in } St_r.\}$$
\end{itemize}
\end{prop}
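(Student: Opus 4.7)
The proof splits into three essentially independent parts, and my plan is to handle each in its own way: part (a) by a direct argument building on the simple $B$-head of $L(\lambda)$, and parts (b) and (c) by invoking the literature as black boxes.

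For part (a), the starting point is Proposition \ref{prop: HeadofLlambdaasBandBrmodule}(a), which identifies the $B$-head of $L(\lambda)$ as the one-dimensional module of weight $\lambda$. Letting $R$ denote the $B$-radical of $L(\lambda)$, the short exact sequence
\[
0 \to R \to L(\lambda) \to \lambda \to 0
\]
has all $T$-weights of $R$ strictly below $\lambda$ in the dominance order. Applying $\Hom_B(-,\mu)$ and reading off the long exact Ext sequence, $\Ext^1_B(L(\lambda),\mu)$ is sandwiched between $\HH^1(B,\mu-\lambda)$ and $\Ext^1_B(R,\mu)$. The left-hand term is controlled by the standard computation of the $B$-cohomology of a one-dimensional module via the Hochschild--Serre spectral sequence for the normal unipotent radical: it vanishes whenever $\mu-\lambda$ does not lie in the relevant cone, and the hypothesis $\lambda \ngeq \mu$ guarantees this.

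The main obstacle is $\Ext^1_B(R,\mu)$, since a naive composition-factor argument will not yield individual vanishing: $R$ may harbour a one-dimensional $B$-section of weight $\nu$ with $\nu > \mu$, whose associated $\HH^1(B,\mu-\nu)$ is non-zero. To handle this cleanly I would appeal to Lemma~2.2 of \cite{CPSvDK}, whose argument operates globally in the rational $G$-module category via the highest-weight (quasi-hereditary) structure rather than tracking individual $B$-composition factors, and which in fact establishes the stronger vanishing $\Ext^i_B(L(\lambda),\mu) = 0$ in \emph{all} degrees when $\lambda \ngeq \mu$.

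For part (b), I would simply invoke \cite[II.12.1]{jantzenbook2nded}. For part (c), the plan is to follow \cite[Prop.~3.2]{AndersenExtensionsofmodulesforalgebraicgroups}: the essential input is the $B_r$-injectivity (equivalently projectivity) of the Steinberg module $St_r$, which lets one compute $\Ext^1_{B_r}(L(\lambda),\nu)$ via a short exact sequence or resolution involving $St_r$ tensored with appropriate characters, and then extract the result as a weight space inside a tensor product whose $T$-weights are governed by those of $St_r$. Matching $B$-weights on the two sides of this identification, together with the shift by $(p^r-1)\rho$ coming from the highest weight of $St_r$, produces the asserted bound.
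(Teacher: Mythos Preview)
Your proposal is correct and lands on exactly the same approach as the paper: the paper does not give an independent proof of this proposition at all, but simply records that part (a) ``is immediate from Lemma~2.2 in \cite{CPSvDK}, where a much stronger result is proven,'' while parts (b) and (c) are cited directly in the statement. Your exploratory short-exact-sequence argument for (a) is harmless but superfluous, since---as you yourself recognize---it stalls at $\Ext^1_B(R,\mu)$ and you end up invoking \cite{CPSvDK} Lemma~2.2 anyway, which is precisely what the paper does.
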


\subsection{}We will make use of two spectral sequences, both of which are first quadrant spectral sequences of the form $E_2^{*,*}$ converging to $\HH^*$.

 The first is a spectral sequence that we will apply to compare $\Ext_B^*(M_1, M_2)$ with  $\Ext_B^*(M_1^{(r)}, M_2^{(r)})$.
\begin{prop}\cite[I.6.10, II.10.14]{jantzenbook2nded}
\label{prop:SSrelatingextFrobenius}
Let $M_1, M_2$ be $B$-modules and $B_r$ be the $r$th Frobenius kernel. There is a first quadrant spectral sequence:
\begin{equation}
\label{eq: spectralsequencetwistingcoho}
E_2^{i,j}=\Ext^i_B(M_1, M_2 \otimes \HH^j(B_r, k)^{(-r)}) \Rightarrow \Ext^{i+j}_B(M_1^{(r)}, M_2^{(r)}).
\end{equation}
\end{prop}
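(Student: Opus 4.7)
The plan is to derive this as a special case of the Lyndon--Hochschild--Serre spectral sequence associated with the normal Frobenius kernel $B_r \triangleleft B$. Concretely, for any rational $B$-module $N$ the LHS spectral sequence gives
\begin{equation*}
  E_2^{i,j} = \HH^i\bigl(B/B_r,\, \HH^j(B_r, N)\bigr) \;\Longrightarrow\; \HH^{i+j}(B, N).
\end{equation*}
I would apply this with $N = \Hom_k\!\bigl(M_1^{(r)}, M_2^{(r)}\bigr) \cong (M_1^*\otimes M_2)^{(r)}$, so that the abutment $\HH^{i+j}(B,N)$ is exactly $\Ext^{i+j}_B(M_1^{(r)}, M_2^{(r)})$.

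The first key step is to evaluate the inner cohomology $\HH^j(B_r, N)$. Because the $r$th Frobenius morphism vanishes on $B_r$, any module of the form $W^{(r)}$ is trivial as a $B_r$-module. Thus the universal-coefficient style isomorphism yields
\begin{equation*}
  \HH^j(B_r, N) \;\cong\; N \otimes_k \HH^j(B_r, k),
\end{equation*}
and this is naturally an isomorphism of $B/B_r$-modules, where $B/B_r$ acts diagonally on the tensor product (the action on $N$ being the residual one coming from the $B$-action, and the action on $\HH^j(B_r,k)$ being the conjugation action of $B$ on $B_r$, which is trivial on $B_r$).

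The second step is to translate this back to $B$-cohomology via the canonical isomorphism $B/B_r \cong B$ given by the $r$th power of Frobenius. Under this identification, a $B/B_r$-module $V$ corresponds to the $B$-module $V^{(-r)}$ (untwist by $r$). Applied to our situation, $N$ viewed as a $B/B_r$-module untwists to $M_1^*\otimes M_2$, while $\HH^j(B_r,k)$ untwists to $\HH^j(B_r,k)^{(-r)}$. Combining these and using that $\HH^i$ commutes with this identification, we obtain
\begin{equation*}
  E_2^{i,j} \;\cong\; \HH^i\!\bigl(B,\, (M_1^*\otimes M_2)\otimes \HH^j(B_r,k)^{(-r)}\bigr)
  \;\cong\; \Ext^i_B\!\bigl(M_1,\, M_2 \otimes \HH^j(B_r,k)^{(-r)}\bigr),
\end{equation*}
which is the desired $E_2$-term. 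First-quadrantness is automatic since both $\HH^i(B,-)$ and $\HH^j(B_r,-)$ vanish for negative indices.

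The routine content is the LHS construction itself (which is standard and cited directly from \cite{jantzenbook2nded}); the only point requiring real care, and the main potential obstacle, is bookkeeping the Frobenius twists in the $B/B_r \cong B$ identification so that the twist $(-r)$ appears on $\HH^j(B_r,k)$ and not on $M_2$. I would verify this by checking it on weight spaces, since a module $V$ and its untwist $V^{(-r)}$ are related by division of all $T$-weights by $p^r$, and the map $B/B_r \xrightarrow{\sim} B$ sends a class represented by $b$ to $b^{(r)}$ in weight-theoretic terms.
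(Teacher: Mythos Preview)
The paper does not give its own proof of this proposition; it simply cites Jantzen \cite[I.6.10, II.10.14]{jantzenbook2nded} and states the result. Your derivation via the Lyndon--Hochschild--Serre spectral sequence for $B_r \lhd B$, together with triviality of $B_r$ on $r$-fold twisted modules and the identification $B/B_r \cong B$ through the $r$th Frobenius, is exactly the standard argument behind those citations, and it is correct. One small caveat worth recording: the identification $\HH^\bullet(B,\Hom_k(M_1^{(r)},M_2^{(r)})) \cong \Ext^\bullet_B(M_1^{(r)},M_2^{(r)})$ and the tensor--Hom rewriting you use are unproblematic when $M_1$ is finite-dimensional (as it is in every application in the paper, where $M_1 = H^0(d)$), but for arbitrary $B$-modules one should phrase the argument directly with $\Ext$ rather than passing through $\Hom_k$.
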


The second is the Lyndon-Hochschild-Serre spectral sequence.

\begin{prop}\cite[I.6.6(1)]{jantzenbook2nded} Let $H$ denote either $G$ or $B$. Let $E$ and $V$ be $H$-modules and $M$ be an $H/H_r$-module. Then there is a first quadrant spectral sequence:

\begin{equation}
\label{eq: LHSspectralsequence}
E_2^{i,j}=\Ext^i_{H/H_r}(M, \Ext^j_{H_r}(E,V)) \Rightarrow \Ext^{i+j}_H(M \otimes E, V).
\end{equation}
\end{prop}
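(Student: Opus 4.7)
The plan is to realize this as the Grothendieck spectral sequence for a composition of functors. The first step is to establish the natural isomorphism
\[
\Hom_H(M \otimes E, V) \cong \Hom_{H/H_r}\!\bigl(M, \Hom_{H_r}(E, V)\bigr).
\]
By tensor--hom adjunction, $\Hom_H(M \otimes E, V) \cong \Hom_H(M, \Hom_k(E, V))$ where $H$ acts on $\Hom_k(E, V)$ by conjugation. Because $M$ is an $H/H_r$-module, $H_r$ acts trivially on it, so any $H$-equivariant map out of $M$ lands in the $H_r$-fixed subspace $\Hom_k(E, V)^{H_r} = \Hom_{H_r}(E, V)$, which carries a residual $H/H_r$-action. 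Under this identification, $\Hom_H(M \otimes E, -)$ factors as $F_2 \circ F_1$, where $F_1 = \Hom_{H_r}(E, -)$ runs from $H$-modules to $H/H_r$-modules and $F_2 = \Hom_{H/H_r}(M, -)$.

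Next I would verify the Grothendieck hypothesis that $F_1$ sends injective $H$-modules to $F_2$-acyclics. The key input is that restriction along $H_r \hookrightarrow H$ preserves injectivity: for both $H = G$ and $H = B$, the quotient $H \to H/H_r$ is a faithfully flat morphism of affine group schemes with kernel $H_r$, and $k[H]$ is injective as an $H_r$-comodule. One then argues that for an injective $H$-module $I$ the $H/H_r$-module $\Hom_{H_r}(E, I)$ is injective, typically by exhibiting it as a direct summand of a module of the form $\operatorname{ind}_{H/H_r}^{H/H_r}(-)$ or, more concretely, by inducing from the known structure of injective hulls in the Frobenius kernel setting.

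With these ingredients the Grothendieck spectral sequence immediately produces
\[
E_2^{i,j} = (R^i F_2)(R^j F_1 V) = \Ext^i_{H/H_r}\!\bigl(M, \Ext^j_{H_r}(E, V)\bigr) \;\Rightarrow\; R^{i+j}(F_2 \circ F_1)(V) = \Ext^{i+j}_H(M \otimes E, V).
\]
This is first-quadrant because both $\Ext^i_{H/H_r}(M, -)$ and $\Ext^j_{H_r}(E, -)$ vanish in negative index.

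The principal obstacle is the acyclicity check for $F_1$: showing that applying $\Hom_{H_r}(E, -)$ to an injective $H$-module yields an injective (or at least $F_2$-acyclic) $H/H_r$-module. This step uses honest input from the theory of infinitesimal and affine group schemes rather than pure homological algebra, and is exactly the point at which one invokes the framework in \cite[I.6]{jantzenbook2nded}. In practice, since the statement is cited directly to Jantzen, I would appeal to the verification there rather than reproduce it.
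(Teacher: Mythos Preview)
The paper does not give its own proof of this proposition; it is simply quoted from \cite[I.6.6(1)]{jantzenbook2nded} as a standard tool. Your sketch is correct and is exactly the construction underlying the cited reference: one factors $\Hom_H(M\otimes E,-)$ as $\Hom_{H/H_r}(M,-)\circ \Hom_{H_r}(E,-)$ and applies the Grothendieck spectral sequence, with the acyclicity hypothesis supplied by the fact that injective $H$-modules restrict to injective $H_r$-modules (Jantzen I.6). Since there is no proof in the paper to compare against, there is nothing further to say beyond noting that your outline matches the standard argument.
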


 We will make use only of a few elementary properties, described below,  of these spectral sequences, so the reader does not need extensive familiarity with them. Specifically we need:

\begin{prop}
\label{prop: Spectralsequencesummary}
 Suppose $E_2^{*,*}$ converging to $H^*$ is one \eqref{eq: spectralsequencetwistingcoho} or \eqref{eq: LHSspectralsequence}.

\begin{itemize}
\item[(a)]There is a five-term exact sequence:
\begin{equation}
\label{eq: 5term}
0 \rightarrow E_2^{1,0} \rightarrow H^1 \rightarrow E_2^{0,1} \rightarrow E_2^{2,0} \rightarrow H^2.
\end{equation}

\item[(b)]Suppose $E_2^{i,j}=0$ for all $i+j=n$. Then $H^n=0$.

\item[(c)] $E_2^{0,0} \cong H^0.$
\end{itemize}
\end{prop}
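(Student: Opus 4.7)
The plan is to deduce all three parts from the standard formalism of first-quadrant cohomological spectral sequences: there is a decreasing filtration $F^\bullet H^n$ with $F^0 H^n = H^n$ and $F^{n+1} H^n = 0$ whose graded pieces satisfy $F^p H^n / F^{p+1} H^n \cong E_\infty^{p,\,n-p}$, and each $E_\infty^{p,q}$ is a subquotient of $E_2^{p,q}$. These facts are recorded in \cite[I.4.1]{jantzenbook2nded} (or any standard homological algebra reference) and do all the work.

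For part (c), the only nonzero filtration piece of $H^0$ is $E_\infty^{0,0}$, since $F^1 H^0 \subseteq F^1 H^1$ would have to contribute in positive total degree. Moreover every differential $d_r: E_r^{0,0} \to E_r^{r,\,1-r}$ for $r \geq 2$ has target in the second quadrant, and every incoming differential $d_r: E_r^{-r,\,r-1} \to E_r^{0,0}$ has source in the third quadrant; since both vanish, $E_\infty^{0,0} = E_2^{0,0}$, and $H^0 \cong E_2^{0,0}$ follows. For part (b), if $E_2^{p,q} = 0$ whenever $p+q=n$, then every $E_\infty^{p,q}$ on that diagonal vanishes as well, so every graded piece of the filtration on $H^n$ is zero, forcing $H^n = 0$.

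For part (a), I would extract the five-term sequence by tracking the low-degree edge maps. A minor computation analogous to part (c) shows $E_\infty^{1,0} = E_2^{1,0}$: the outgoing differentials $d_r: E_r^{1,0} \to E_r^{1+r,\,1-r}$ land in the fourth quadrant for $r \geq 2$, and the incoming ones come from the second quadrant. Hence $E_2^{1,0} = E_\infty^{1,0} = F^1 H^1 \hookrightarrow H^1$, giving the first two maps. Next, $H^1 / F^1 H^1 \cong E_\infty^{0,1}$ is, by definition, the kernel of the only possibly nontrivial differential leaving $E_2^{0,1}$, namely $d_2: E_2^{0,1} \to E_2^{2,0}$; this gives the inclusion $H^1 \to E_2^{0,1}$ with cokernel mapping injectively into $E_2^{2,0}$ as the image of $d_2$. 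Finally $E_\infty^{2,0}$ is the cokernel of $d_2$ and coincides with $F^2 H^2 \hookrightarrow H^2$, yielding the last map. Splicing these identifications produces the desired exact sequence
$$0 \to E_2^{1,0} \to H^1 \to E_2^{0,1} \xrightarrow{d_2} E_2^{2,0} \to H^2.$$

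There is no real obstacle here; each claim is entirely formal once the filtration and edge-map dictionary is set up, and the only small check is the quadrant argument that certain $E_r$ terms are forced to vanish so that $E_2^{1,0}$ and $E_2^{0,0}$ survive unchanged to $E_\infty$. The proposition is being stated only to fix notation for the later applications, so a short citation-plus-sketch style proof is appropriate.
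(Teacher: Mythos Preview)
The paper does not actually prove this proposition; it is stated as a list of elementary facts about first-quadrant spectral sequences that the reader is expected to accept, with no argument given. Your proposal therefore supplies what the paper deliberately omits, and the derivation you give from the filtration and edge-map formalism is the standard one and is essentially correct.

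Two cosmetic points. In part~(c) you mislabel the quadrants: the target $E_r^{r,1-r}$ of the outgoing differential lies in the fourth quadrant (positive $p$, negative $q$), not the second, and the source $E_r^{-r,r-1}$ of the incoming differential lies in the second quadrant, not the third. This does not affect the argument, since all that matters is that both lie outside the first quadrant. Also, the sentence ``$F^1 H^0 \subseteq F^1 H^1$ would have to contribute in positive total degree'' is garbled; the clean reason $F^1 H^0 = 0$ is that its successive quotients are $E_\infty^{p,-p}$ for $p \geq 1$, which vanish by the first-quadrant hypothesis.
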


\section{Specht module cohomology and symmetric powers}
\label{section: Specht module cohomology and symmetric powers}

\subsection{}
Although we will be studying symmetric group cohomology, we  first transfer the problem to the general linear group. We  recall below the result that allows  this. For additional such results and explanations of how they are obtained using the Schur functor and the higher derived functors of its adjoint, see \cite{KNcomparingcohom}. We use only  the following.

\begin{prop}\cite[Corollary 6.3(b)(iii)]{KNcomparingcohom} For $0 \leq i \leq 2p-4$  we have
$$\HH^i(\sd, S^\lambda) \cong \Ext^i_{GL_d(k)}(H^0(d), H^0(\lambda)).$$
\label{prop: KNresultequatescohSpectwithGLd}
\end{prop}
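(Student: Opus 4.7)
The plan is to transport the calculation from $GL_d(k)$ to $\sd$ using the Schur functor $f : S(n,d)\text{-mod} \to \ks\text{-mod}$ given by $M \mapsto eM$, where $e$ is the idempotent from Proposition~\ref{prop: ext agrees GLnGLd}. Taking $n = d$, the right-hand side of the claimed isomorphism becomes $\Ext^i_{S(d,d)}(H^0(d), H^0(\lambda))$ by Propositions~\ref{prop: extagrees  in S or G category} and~\ref{prop: ext agrees GLnGLd}. A direct weight-space calculation shows that $f(H^0(d))$ is the one-dimensional $(1^d)$-weight space of $S^d V$ with trivial $\sd$-action, so $f(H^0(d)) \cong k$, and in the paper's convention $f(H^0(\lambda)) \cong S^\lambda$.

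The bridge between the two $\Ext$ calculations is the Grothendieck spectral sequence for the adjoint pair $(f, f^!)$, where $f^!(N) = \Hom_{\ks}(eS(d,d), N)$ is the right adjoint of $f$. Since $f$ is exact one obtains a first-quadrant spectral sequence
\[
E_2^{i,j} = \Ext^i_{S(d,d)}\bigl(H^0(d),\, R^j f^!(S^\lambda)\bigr) \;\Longrightarrow\; \Ext^{i+j}_{\sd}\bigl(k, S^\lambda\bigr) = \HH^{i+j}(\sd, S^\lambda).
\]
The desired comparison in total degree $m$ is immediate whenever this spectral sequence collapses onto its bottom row through total degree $m$, which in turn requires $f^!(S^\lambda) \cong H^0(\lambda)$ together with $R^j f^!(S^\lambda) = 0$ for $1 \leq j \leq m$.

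Of these two requirements, the identification $f^!(S^\lambda) \cong H^0(\lambda)$ is routine: $f^!$ is essentially inverse to $f$ on modules admitting a good filtration, and $H^0(\lambda)$ trivially has one. The main obstacle, and precisely what controls the range $0 \leq i \leq 2p-4$, is the vanishing
\[
R^j f^!(S^\lambda) = 0 \qquad \text{for } 1 \leq j \leq 2p-4.
\]
This is a delicate quantitative measurement of how the inverse Schur functor fails to be exact on Specht modules; the bound $2p-4$ is sharp and is dictated by the fact that the first nonvanishing cohomology of $\sd$ with trivial coefficients appears in degree $2p-3$. A full proof requires an analysis of signed Young permutation modules and their injective coresolutions inside the polynomial category; this is carried out by Kleshchev and Nakano in \cite{KNcomparingcohom}, and the proposition is exactly their Corollary~6.3(b)(iii).
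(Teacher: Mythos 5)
The paper offers no proof of this proposition at all; it is quoted verbatim from Kleshchev--Nakano, so there is no ``paper's proof'' to match your argument against. Your sketch of the underlying mechanism --- the Schur functor $f = e(-)$, its exactness, the Grothendieck spectral sequence for the adjoint pair, and the reduction to a vanishing of higher derived functors of $f^!$ on the Specht module --- is the right general framework, and you are correct that the genuine content lives in the vanishing statement proved in \cite{KNcomparingcohom}. In that sense your proposal is compatible with the paper's approach, which is simply to cite.

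Two specifics in your sketch are nonetheless shaky and would need to be cleaned up before this could stand as an argument. First, in the James conventions used throughout this paper ($S^\lambda$ is the submodule of $M^\lambda$ spanned by polytabloids, $S_\lambda$ its dual), the Schur functor sends the costandard module $H^0(\lambda)$ to the \emph{dual} Specht module $S_\lambda$ and the Weyl module $V(\lambda)$ to $S^\lambda$; so your claim ``$f(H^0(\lambda)) \cong S^\lambda$'' has the standard/costandard roles reversed, and the step from $\Ext^i_{\sd}(k,S^\lambda)$ to an $\Ext$ group with $H^0$'s in both slots requires an explicit contravariant duality on the Schur-algebra side (or a sign-twisted version of $f$), not the naive identification. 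Second, calling the identification $f^!(S^\lambda) \cong H^0(\lambda)$ ``routine'' seriously understates the matter: the closely related statement that the right adjoint $\mathcal{G}$ carries dual Specht modules to costandard modules is a substantive theorem (Hemmer--Nakano, for $p \geq 5$), not a formal consequence of good filtrations, and one should be careful that the version one needs here is even the same statement. Finally, note that, as the paper observes in the remark immediately following this proposition, the range $0 \leq i \leq 2p-4$ is itself a correction of the bound $2p-3$ appearing in \cite{KNcomparingcohom}; your heuristic invoking the first nonvanishing degree of $\HH^*(\sd,k)$ does not account for this off-by-one shift.
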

\begin{rem}Proposition \ref{prop: KNresultequatescohSpectwithGLd} is stated in \cite{KNcomparingcohom} for $0 \leq i \leq 2p-3$ but actually only holds up to $i=2p-4$, cf. \cite[2.4]{HNcohomologyofspechtmodules} for an explanation.
\end{rem}

We will actually do our computations using $B$ cohomology, so the following is crucial, where part (b) is immediate from part (a) and Propositions \ref{prop: ext agrees GLnGLd} and \ref{prop: KNresultequatescohSpectwithGLd}.

\begin{prop}
\label{prop: frobenius reciprocityforH0lambda}

\begin{itemize}
\item[]
\item[(a)] \cite[4.7a]{jantzenbook2nded}
Let $V$ be a $G$-module and $\lambda \vdash d$. Then for all $i \geq 0$:
$$\Ext^i_G(V,H^0(\lambda)) \cong \Ext^i_B(V,\lambda).$$

\item[(b)] If $0 \leq i \leq 2p-4$ and $G=GL_n(k)$ for $n \geq d$ then
$$\HH^i(\sd, S^\lambda) \cong \Ext^i_B(H^0(d), \lambda).$$
\end{itemize}

\end{prop}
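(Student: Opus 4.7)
The proof splits naturally into the two parts. Part (a) is the derived form of Frobenius reciprocity. The case $i=0$ is classical: $\Hom_G(V,\mathrm{ind}_B^G\lambda)\cong\Hom_B(V,\lambda)$. To extend to all $i\geq 0$, I would use the Grothendieck spectral sequence associated to the factorization $\Hom_B(V,-)=\Hom_G(V,-)\circ\mathrm{ind}_B^G$, namely
$$E_2^{i,j}=\Ext^i_G\bigl(V,R^j\mathrm{ind}_B^G\lambda\bigr)\Rightarrow\Ext^{i+j}_B(V,\lambda).$$
Since $\lambda\vdash d$ is dominant, Kempf's vanishing gives $R^j\mathrm{ind}_B^G\lambda=0$ for $j>0$, while $R^0\mathrm{ind}_B^G\lambda=H^0(\lambda)$ by definition. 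The spectral sequence therefore collapses onto the $j=0$ row and produces the desired isomorphism; this is precisely the cited result from \cite{jantzenbook2nded}.

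For part (b), the plan is to chain together three previously established isomorphisms. First, Proposition \ref{prop: KNresultequatescohSpectwithGLd} identifies $\HH^i(\sd,S^\lambda)$ with $\Ext^i_{GL_d(k)}(H^0(d),H^0(\lambda))$ in the range $0\leq i\leq 2p-4$. Second, when $n>d$, the equivalence of Proposition \ref{prop: ext agrees GLnGLd} between the module categories of $S(n,d)$ and $S(d,d)$ preserves both $H^0(d)$ and $H^0(\lambda)$ and, being an equivalence of abelian categories, preserves all $\Ext$ groups; combined with Proposition \ref{prop: extagrees in S or G category} (which says that Schur-algebra $\Ext$ agrees with $G$-module $\Ext$), this transports the computation from $GL_d(k)$ to $GL_n(k)$. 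The case $n=d$ requires no such transport. Third, part (a) applied with $V=H^0(d)$ and $G=GL_n(k)$, so that $B$ becomes the Borel of $GL_n(k)$, converts $\Ext^i_{GL_n(k)}(H^0(d),H^0(\lambda))$ to $\Ext^i_B(H^0(d),\lambda)$, yielding the claim.

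There is no real obstacle: the substantive input is Kempf's vanishing behind part (a), and everything else is formal bookkeeping among results already on hand. The main point to be careful about is that the $B$ appearing in the conclusion of part (b) is the Borel of $GL_n(k)$ for whichever $n\geq d$ one chooses; but the chain of isomorphisms above makes explicit that the answer is independent of this choice, reconciling the apparent ambiguity.
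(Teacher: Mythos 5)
Your proposal is correct and follows essentially the same path as the paper. The paper simply cites \cite[I.4.7a]{jantzenbook2nded} for part (a) and remarks that part (b) is immediate from (a) together with Propositions \ref{prop: ext agrees GLnGLd} and \ref{prop: KNresultequatescohSpectwithGLd}; what you have done is unwind the citation (the Grothendieck spectral sequence for $\Hom_G(V,-)\circ\operatorname{ind}_B^G$, collapsing by Kempf vanishing because $\lambda$ is dominant, with injectives preserved since restriction is exact) and make explicit the bookkeeping for (b), including the use of Proposition \ref{prop: extagrees  in S or G category} to pass between Schur-algebra and $G$-module $\Ext$, which the paper leaves tacit.
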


\subsection{Structure of symmetric powers} Proposition \ref{prop: frobenius reciprocityforH0lambda}(b) suggests one must understand $H^0(d)$ in order to compute low degree Specht module cohomology. The module $H^0(d)$ is isomorphic to the $d$th symmetric power of the natural module $V \cong k^n$. In \cite{DotysubmoduelstructureWeylmodules}, Doty gave a complete description of the composition factors and submodule lattice of $H^0(d)$, which we recall below.

Let
$$B(d):=\{\beta=(\beta_1, \beta_2, \ldots, \beta_ n) \mid \beta_i \geq 0 \,\,\mathrm{ and } \sum \beta_i=d\}.$$ Then $H^0(d)$ has a basis of weight vectors indexed naturally by $B(d)$ \cite[2.1]{DotysubmoduelstructureWeylmodules}.
For $\beta \in B(d)$, associate a sequence of
nonnegative integers $c_i(\beta)$ as follows. First write each
$\beta_i$ out in base $p$. Then add them base $p$ to get $d$. For $i\geq
1$, let $c_i(\beta)$ be the number that is ``carried" to the top of
the $p^i$ column during the addition. For example if $p=3$ and
$\beta=(5,5,2)$ then the addition $5+5+2$=12 base three looks like:
$$\begin{array}{ccc}
  _{1} &_{2} &  \\
   & 1 & 2 \\
   & 1 & 2 \\
   +&0& 2 \\\hline
  1 & 1 & 0 \\
\end{array}$$
so $c(\beta)=(2,1)$. Doty calls this the {\it carry pattern}
of $\beta$.

Let $C(d)$ be the set of all carry patterns which occur for some $\beta \in B(d)$. Define a partial order on $C(d)$ by declaring $(c_1, c_2, \ldots, c_m) \leq (c_1', c_2', \ldots, c_m')$ if $c_i \leq c_i'$ for all $i$. Given a subset $B\subseteq C(d)$, let $T_B$ be the subspace of $H^0(d)$ spanned by all $\beta$ with $c(\beta) \in B$. We can now state Doty's result:

\begin{thm}\cite{DotySubmodulesofsymmetricpowers}
\label{thm:Dotystructureofsymmetricpowers}
The correspondence $B \rightarrow T_B$ defines a lattice isomorphism between the lattice of ideals in the poset $C(d)$ and the lattice of submodules of $H^0(d)$. In particular the composition factors of $H^0(d)$ are in one-to-one correspondence with the subspaces $T_c$, for $c \in C(d)$. If $L(\lambda)$ corresponds to $T_c$ then $\lambda$ is the maximal partition in $\Lambda^+(n,d)$ with carry pattern $c$.
\end{thm}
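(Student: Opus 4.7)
\medskip

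\noindent\textbf{Proof proposal.}
The symmetric power $H^0(d)\cong S^d(V)$ has the canonical weight basis $\{x^\beta:\beta\in B(d)\}$ of monomials, and every weight space is one-dimensional. Consequently, every $T$-stable subspace --- and in particular every $G$-submodule --- is spanned by a subset of this basis, so the submodule lattice embeds into the lattice of subsets of $B(d)$. My plan is to show that this embedding identifies submodules precisely with the spans $T_B$ for $B$ an ideal in $C(d)$, and that the graded pieces of the corresponding filtration are the simples $L(\lambda_c)$ claimed.

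For an ideal $B\subseteq C(d)$, the subspace $T_B$ is $T$-stable by construction, so $G$-stability reduces to checking each positive and negative root-subgroup generator of $G$. Such a generator acts on $x^\beta$ as a derivation that transfers one unit of weight from one coordinate to another, producing a (scalar multiple of a) new monomial $x^{\beta'}$. The combinatorial heart of the argument is the following claim about base-$p$ addition: if $\beta'$ is obtained from $\beta$ by decreasing one entry by $1$ and increasing another by $1$, then $c(\beta')\leq c(\beta)$ componentwise, because such a single-unit transfer cannot create a carry in a column where none existed before. Once this claim is established, $T_B$ is $G$-stable whenever $B$ is downward closed.

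Next, for $c$ maximal in an ideal $B$, the layer $T_B/T_{B\setminus\{c\}}$ is spanned by the images of $\{x^\beta:c(\beta)=c\}$, and its maximal dominant weight in the dominance order is the unique partition $\lambda_c$ described in the statement --- the one whose base-$p$ digits are packed as far to the left as possible. Writing the $p$-adic expansion $\lambda_c=\sum_i p^i\mu^{(i)}$ with each $\mu^{(i)}$ a $p$-restricted partition whose digit pattern encodes $c$, the Frobenius-twisted multiplication map
\[
H^0(\mu^{(0)})\otimes H^0(\mu^{(1)})^{(1)}\otimes H^0(\mu^{(2)})^{(2)}\otimes\cdots \;\longrightarrow\; H^0(d)
\]
lands in $T_B$ and, together with Steinberg's tensor product theorem, produces a highest-weight vector of weight $\lambda_c$ in the layer whose $G$-span is a copy of $L(\lambda_c)$. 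A weight-multiplicity count (using one-dimensionality of weight spaces in $H^0(d)$) shows that this copy exhausts the layer, so the layer is simple. Iterating over the ideal lattice then yields a multiplicity-free composition series of $H^0(d)$ indexed by $C(d)$; because each composition factor occurs exactly once and the $T_B$'s realize all ideals as submodules, every submodule must coincide with some $T_B$, giving the desired lattice isomorphism.

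\medskip

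\noindent\textbf{Main obstacle.} The technical heart is the base-$p$ arithmetic claim in the second paragraph: verifying that a unit transfer between two summands of the fixed base-$p$ sum $\beta_1+\cdots+\beta_n=d$ cannot increase any of the carries $c_i$. This needs a careful case analysis of how carry propagation is affected when one summand drops through a power-of-$p$ boundary while another crosses upward. A secondary subtlety is checking that the layer $T_B/T_{B\setminus\{c\}}$ is actually simple rather than merely having $L(\lambda_c)$ as head or socle; this depends on matching the formal character of $L(\lambda_c)$, computed via Steinberg's theorem from the digits of $\lambda_c$, with the one-dimensional weight multiplicities inherited from $H^0(d)$.
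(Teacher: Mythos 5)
The paper does not prove this theorem; it is cited from Doty's work and used as a black box, so there is no in-paper proof to compare against. Evaluating your sketch as an independent attempt: the overall framework (one-dimensional weight spaces force every submodule to be a span of monomials; check closure under root-group action; identify the layers with simples via the maximal partition with a given carry pattern) is indeed the right shape. But the ``combinatorial heart'' you flag contains a genuine error, not merely a loose end.

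Your claim that a single-unit transfer $\beta \mapsto \beta' = \beta - e_j + e_i$ always satisfies $c(\beta') \leq c(\beta)$ is false. Take $p=3$ and $\beta = (3,3)$, so $d=6$: in base $3$ the sum $10_3 + 10_3 = 20_3$ has no carries, so $c(\beta) = (0,0)$. Transfer one unit to get $\beta' = (4,2)$: now $11_3 + 02_3$ has a carry out of the units column, so $c(\beta') = (1,0) > c(\beta)$. The reason this does not actually break $G$-stability is that in characteristic $3$ the coefficient of this transfer is $\binom{3}{1} = 3 \equiv 0$; the only accessible transfers out of $x_2^3$ move $0$ or $3$ units. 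This exposes a second issue in the same sentence: in characteristic $p$, checking that each root vector acts ``as a derivation'' (a single-unit transfer) is not enough for $G$-stability --- one must check stability under the full hyperalgebra, i.e.\ the divided power operators $E_{ij}^{(m)}$ for all $m$, whose coefficients $\binom{\beta_j}{m}$ are governed by Lucas' theorem. The correct combinatorial lemma is therefore: \emph{if} $\binom{\beta_j}{m} \not\equiv 0 \pmod p$ (equivalently, each base-$p$ digit of $m$ is at most the corresponding digit of $\beta_j$), \emph{then} transferring $m$ units from the $j$th entry to the $i$th entry does not increase the carry pattern. Proving that, not the unconstrained one-unit claim, is the real work, and without the Lucas constraint the statement is simply not true. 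The secondary concern you raise --- matching the character of $L(\lambda_c)$ with a composition layer --- is genuine but more routine once the filtration by ideals is in place; the essential missing ingredient is the Lucas-constrained version of the carry-monotonicity lemma.
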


Given $\lambda \in \Lambda^+(n,d)$, it is not difficult to determine if $[H^0(d): L(\lambda)] \neq 0$. The next lemma is an easy exercise working with carry patterns.

\begin{lem}
\label{lem: tells youcompositionfactorsofsympowersinpadicexpansion} Let $\lambda =(\lambda_1, \lambda_2, \ldots, \lambda_n)\in \Lambda^+(n,d)$. Define numbers $a_{ij}$ with $0 \leq a_{ij}<p$ by
$\lambda_i=\sum a_{ij}p^j.$ Then:
\begin{itemize}
\item[(a)] $[H^0(d): L(\lambda)] \neq 0$ if and only if $a_{ij}\neq p-1$ implies $a_{lj}=0$ for all $l>i.$

\item[(b)] $[H^0(pd):L(p\lambda)]=[H^0(d): L(\lambda)].$
\end{itemize}
\end{lem}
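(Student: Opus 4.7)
The plan is to derive both parts from Doty's Theorem~\ref{thm:Dotystructureofsymmetricpowers}, which says $[H^0(d):L(\lambda)] = 1$ precisely when $\lambda$ is the dominance-maximum element of $\Lambda^+(n,d)$ with its own carry pattern, and $0$ otherwise. The key preliminary observation is that the carry pattern $c(\lambda)$ together with $d$ determines the column sums $S_j := \sum_i a_{ij}$ of the base-$p$ digit matrix: reading off the addition defining $c(\lambda)$ gives $S_j + c_j = d_j + p\,c_{j+1}$, where $d = \sum_j d_j p^j$ and $c_0 = 0$. Hence two partitions in $\Lambda^+(n,d)$ share a carry pattern iff their digit matrices have identical column sums, and the question in (a) becomes: among digit matrices with prescribed column sums, which yields the dominance-maximum partition?

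For (a), writing $\sum_{i=1}^k \lambda_i = \sum_j p^j \bigl(\sum_{i=1}^k a_{ij}\bigr)$ and using that the coefficients $p^j$ are positive, simultaneous maximization of the partial sums decouples across columns. For a single column with entries in $\{0,1,\ldots,p-1\}$ and sum $S_j$, the unique configuration maximizing every partial sum $\sum_{i=1}^k a_{ij}$ is the greedy ``top-heavy'' one: a run of $(p-1)$'s, followed by one possibly smaller entry, then all zeros---and this is precisely the condition stated in (a). Since each column of this shape is weakly decreasing in $i$, the resulting sequence $(\lambda_1, \lambda_2, \ldots)$ is automatically weakly decreasing, hence a genuine partition in $\Lambda^+(n,d)$, confirming it is indeed the dominance max of its carry class.

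Part (b) is then essentially bookkeeping: the base-$p$ digits of $p\lambda$ are those of $\lambda$ shifted up one column, i.e., $a'_{i,0} = 0$ and $a'_{i,j+1} = a_{ij}$. The condition in (a) for $p\lambda$ in column $0$ is vacuous, and in column $j+1$ coincides with the condition for $\lambda$ in column $j$. Hence $[H^0(pd):L(p\lambda)] \neq 0$ iff $[H^0(d):L(\lambda)] \neq 0$, and both multiplicities lie in $\{0,1\}$ by Doty, so they are equal. The main conceptual step throughout is the per-column decoupling of dominance used to prove (a); everything else is straightforward manipulation of base-$p$ expansions.
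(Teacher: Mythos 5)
Your proof is correct. The paper gives no explicit argument for this lemma (it calls it ``an easy exercise working with carry patterns''), and your route---equating carry patterns with the column sums of the base-$p$ digit matrix via $S_j + c_j = d_j + p\,c_{j+1}$, characterizing the dominance-maximum in a carry class by the per-column top-heavy configuration (which maximizes every partial column sum simultaneously and is automatically weakly decreasing), and deducing (b) from the digit-shift $a'_{i,j+1}=a_{ij}$---is exactly the intended exercise filled in.
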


We will later use a strengthened version of Lemma \ref{lem: tells youcompositionfactorsofsympowersinpadicexpansion}(b), namely that $H^0(d)^{(1)}$ is a submodule of $H^0(pd)$ in a particularly nice way, cf. Proposition \ref{prop: H0twistedintoH0pd}.

\section{Previously known Specht module cohomology}
\subsection{Computing $\Hom_{\sd}(k,S^\lambda)$}

The degree zero cohomology $\HH^0(\Sigma_d, S^\lambda) \cong \Hom_{\sd}(k, S^\lambda)$ was determined by James. For an integer $t$ let $l_p(t)$ be the least nonnegative integer
satisfying $t<p\,^{l_p(t)}$. James proved:

\begin{thm}\cite[24.4]{Jamesbook}
\label{thm:JamestheoremonHom}
The cohomology, $\HH^0(\Sigma_d, S^\lambda)$ is zero unless  $\lambda_i \equiv -1 \mbox{ \rm mod }
p\,^{l_p(\lambda_{i+1})}$ for all $i$, in which case it is one-dimensional.
\end{thm}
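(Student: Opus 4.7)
The plan is to apply Proposition \ref{prop: frobenius reciprocityforH0lambda}(b) at $i=0$ with $G = GL_n(k)$ for some $n \geq d$, reducing the problem to computing $\Hom_B(H^0(d), k_\lambda)$, where $k_\lambda$ is the one-dimensional $B$-module of weight $\lambda$.  Doty's explicit description of $H^0(d) = S^d V$ (Theorem \ref{thm:Dotystructureofsymmetricpowers}) then controls the computation.

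For the upper bound, observe that $H^0(d)$ has weight basis $\{v^\beta : \beta \in B(d)\}$ with $v^\beta = v_1^{\beta_1}\cdots v_n^{\beta_n}$, so its $\lambda$-weight space is exactly the one-dimensional $k \cdot v^\lambda$.  A $B$-homomorphism $\phi : H^0(d) \to k_\lambda$ must vanish on weights $\neq \lambda$ (by $T$-equivariance) and is thus determined by $\phi(v^\lambda)$.  Hence $\di \Hom_B(H^0(d),k_\lambda) \leq 1$.

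Whether $\phi$ can be taken nonzero reduces to a $U^-$-invariance check: the functional is $B$-equivariant iff $v^\lambda \notin N$, where $N$ is the $B$-submodule of $H^0(d)$ generated by all $v^\beta$ with $\beta \rhd \lambda$.  One analyzes $N$ via the divided power action
$$E_{i+1,i}^{[m]}(v^\beta) = \binom{\beta_i}{m}\, v^{\beta - m\epsilon_i + m\epsilon_{i+1}};$$
in particular the $v^\lambda$-coefficient of $\exp(tE_{i+1,i}) \cdot v^{\lambda + m(\epsilon_i - \epsilon_{i+1})}$ equals $t^m\binom{\lambda_i + m}{m}$, which by Kummer's theorem is nonzero modulo $p$ precisely when the base-$p$ addition $\lambda_i + m$ has no carries.

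For the direction ``James fails $\Rightarrow \Hom = 0$'': if $a_{ij} < p-1$ for some $i$ and some $j < l_p(\lambda_{i+1})$, choose $m = p^j$.  Then $p^j \leq \lambda_{i+1}$ (so the resulting composition lies in $B(d)$) and the base-$p$ addition $\lambda_i + p^j$ has no carries, placing $v^\lambda \in N$.  The converse, ``James holds $\Rightarrow v^\lambda \notin N$,'' is the main obstacle.  The plan here is to pass to the ambient $G$-submodule generated by $\{v^\beta : \beta \rhd \lambda\}$, which by Doty's theorem equals $T_J$ for the ideal
$$J = \{c \in C(d) : c \leq c(\beta) \text{ for some } \beta \rhd \lambda\}.$$
Since $N \subseteq T_J$, it suffices to show that under James's condition $c(\lambda) \notin J$, i.e., $c(\lambda) \not\leq c(\beta)$ for every $\beta \rhd \lambda$.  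This becomes a combinatorial comparison of carry patterns which, via careful bookkeeping of base-$p$ digits and further appeals to Kummer's theorem, collapses to James's modular condition.  This final combinatorial translation is the most delicate step of the argument.
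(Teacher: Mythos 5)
Your approach matches the rederivation the paper only sketches in Remark \ref{rem: James result comesfromDoty}: reduce via Proposition \ref{prop: frobenius reciprocityforH0lambda}(b) to $\Hom_B(H^0(d),\lambda)$, note that the $\lambda$-weight space is one-dimensional (so $\dim\leq 1$), and then invoke Doty's structure theorem. The paper itself cites \cite[24.4]{Jamesbook} for the result and leaves the Doty route as an exercise, so the fair comparison is to that remark. What you add is the concrete divided-power and Kummer's-theorem computation for the ``James fails $\Rightarrow$ $\Hom=0$'' direction, which the paper disposes of with ``Doty's paper explicitly describes the $B$ action, so this can be done''; that piece is correct and is a genuine filling-in. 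Your reformulation of the converse via $c(\lambda)\notin J$ (where $J$ is the ideal generated by all $c(\beta)$ with $\beta\rhd\lambda$) is equivalent to the paper's formulation in terms of a constituent $L(\mu)$ with $\mu\rhd\lambda$ and $c(\mu)>c(\lambda)$: since any composition $\beta$ sorts to a dominating partition with the same carry pattern and $\lambda$ is maximal in its own carry class, $c(\lambda)\in J$ forces some such $\mu$, so passing to $T_J$ loses nothing. However, the ``most delicate step'' you flag --- proving that James' digit condition is exactly the combinatorial statement $c(\lambda)\notin J$ --- is precisely what the paper declines to write out (``we leave the details to the reader''), and you leave the same gap. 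So the proposal is correct as far as it goes and in the same spirit, but it is a sketch with the same final step missing as the paper's own remark.
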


\begin{rem}
\label{rem: James result comesfromDoty} James' result is proved entirely using  symmetric group theory, applying the famous kernel intersection theorem. We observe that Doty's description of $H^0(d)$  allows one to calculate precisely  when $\Hom_B(H^0(d), \lambda)$ is nonzero, and thus rederive James' result in an entirely different way. The space of homomorphisms can only be nonzero for $\lambda \vdash d$ with $[H^0(d):L(\lambda)]=1$, so we must determine for each  $L(\lambda)$ a constituent of $H^0(d)$, if there is a nonzero $B$ module homomorphism from $H^0(d)$ to $\lambda$.  There is such a map precisely when there does not exist a $\mu \rhd \lambda$ such that $[H^0(d) : L(\mu)] =1$ and $c(\mu) > c(\lambda)$. This can easily be seen to be equivalent to the condition in Theorem \ref{thm:JamestheoremonHom}. Since the result is already known, we leave the details to the reader. One must show that given such $\lambda$ and $\mu$ (i.e. $\mu \rhd \lambda$ and $c(\mu) >c(\lambda$))   the $\lambda$ weight vector always lies in the $B$ submodule generated by the $\mu$ weight vector, so there is no homomorphism in this case. Doty's paper explicitly describes the $B$ action, so this can be done.
\end{rem}

\begin{exm}In characteristic five, $[H^0(25): L(20,5)] =1$ but $\HH^0(\Sigma_{25}, S^{(20,5)})=0$. This is because $(24,1) \rhd (20,5)$ and $c(24,1)=(1,1) > (1,0)= c(20,5).$ The $(20,5)$ weight vector lies in the $B$ submodule generated by the $(24,1)$ weight vector, and so $\Hom_B(H^0(25), (20,5))=0.$
\end{exm}

\subsection{Two-part partitions}
The other known example of $\HH^1(\Sigma_d, S^{\lambda})$ is for $\lambda=(\lambda_1, \lambda_2)$ a two-part partition, where the answer can be deduced from work of Erdmann \cite{ErdmannExt1WeylmodulesSL2} on $\Ext^1$ between Weyl modules for $SL_2(k)$, as we describe below.

For $r>0$ write $r=\sum_{i \geq 0}r_ip^i$ with $0 \leq r_i \leq p-1$. For $0 \leq a \leq p-1$ define $\overline{a}$ so that $0 \leq \overline{a} \leq p-1$ and $a + \overline{a} \equiv p-1$ (mod $p$). Erdmann defined \cite[p. 456]{ErdmannExt1WeylmodulesSL2} a collection of integers $\Psi_p(r)$ by
\begin{equation}
\label{eq: definitionofPsi(r)}
\Psi_p(r)=\{\sum_{i=0}^{u-1}\overline{r_i}p^i +p^{u+a} : r_u \neq p-1, a \geq 1, u \geq 0\} \cup \{\sum_{i=0}^{u}\overline{r_i}p^i : r_u \neq p-1, u \geq 0\}.
\end{equation}

From Lemma 3.2.1 in \cite{HNcohomologyofspechtmodules} we deduce $\HH^1(\Sigma_d, S^{(\lambda_1, \lambda_2)})$ is at most one-dimensional, and it is nonzero precisely when $\lambda_2 \in \Psi_p(\lambda_1-\lambda_2)$.  The conditions on $\lambda$ for this to occur were stated incorrectly in \cite{HNcohomologyofspechtmodules}, so we correct it here.

First suppose $\lambda_2$ lies in the second set in the description \eqref{eq: definitionofPsi(r)} of $\Psi_p(\lambda_1-\lambda_2)$. So we have:

\begin{eqnarray}
\label{eq: lambda1lambda2}
  \lambda_1 - \lambda_2 &=& r_0+r_1p+r_2p^2 + \cdots \\\nonumber
  \lambda_2 &=&  \overline{r_0}+ \overline{r_1}p + \cdots + \overline{r_u}p^u \\\nonumber
  \lambda_1&=&p-1+(p-1)p+(p-1)p^2+ \cdots + (p-1)p^u+ r_{u+1}p^{u+1} + \cdots
\end{eqnarray}
where $\overline{r_u} \neq 0$.

Recall from Theorem \ref{thm:JamestheoremonHom}  that $\HH^0(\Sigma_d, S^{(\lambda_1, \lambda_2)}) \neq 0$, precisely when $\lambda_1 \equiv -1$ mod $p^{l_p(\lambda_2)}$. Equation \ref{eq: lambda1lambda2} demonstrates that $l_p(\lambda_2)=u+1$ and $\lambda_1 \equiv -1$ mod $p^{u+1}$. Thus we have the exactly James' criterion and obtain Proposition \ref{prop:twopartpartitions}(a) below.

Next we consider when $\lambda_2$ lies in the first subset in \eqref{eq: definitionofPsi(r)}. In this case we will have, for some $u \geq 0$ and $a \geq 1$:

\begin{eqnarray}
\label{eq: lambda1lambda2case2}
  \lambda_1 - \lambda_2 &=& r_0+r_1p+r_2p^2 + \cdots +r_{u-1}p^{u-1}+r_up^u+\cdots\\\nonumber
  \lambda_2 &=&  \overline{r_0}+ \overline{r_1}p + \cdots + \overline{r_{u-1}}p^{u-1}+0p^u+p^{u+a} \\\nonumber
  \lambda_1&=&p-1+(p-1)p+(p-1)p^2+ \cdots + (p-1)p^{u-1}+ r_{u}p^{u} + \cdots
\end{eqnarray}
where ${r_u} \neq p-1.$ In \eqref{eq: lambda1lambda2case2}, $u$ is minimal such that $\lambda_1 \equiv -1$ mod $p^u$ but not $\equiv -1$ mod $p^{u+1}$. This proves the second part of:

\begin{prop}
\label{prop:twopartpartitions}
Let $\lambda=(\lambda_1, \lambda_2) \vdash d$ with $\lambda_2 \neq 0$. Then
$H^1(\Sigma_d,S^\lambda)$ is zero except in the two cases below, when it is one-dimensional:

\begin{itemize}
\item[(i)] If $\Hom_{\Sigma_d}(k,S^\lambda) \neq 0$ or
\item[(ii)] If  $\lambda_1 \equiv -1$ mod $p^u$ but $\lambda_1 \not \equiv -1$ mod $p^{u+1}$ for some $u \geq 0$ such that $\lambda_2=c+p^b$ for $c<p^u$ and $b>u$.
\end{itemize}
\end{prop}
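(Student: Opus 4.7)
The plan is to start from the known fact (Lemma 3.2.1 of \cite{HNcohomologyofspechtmodules}) that $\HH^1(\Sigma_d, S^{(\lambda_1,\lambda_2)})$ is at most one-dimensional and nonzero precisely when $\lambda_2 \in \Psi_p(\lambda_1-\lambda_2)$. Since $\Psi_p(r)$ is the union of two explicit sets in \eqref{eq: definitionofPsi(r)}, it suffices to show that membership in the second set is equivalent to case (i), and membership in the first set is equivalent to case (ii). Both parts reduce to transparent base-$p$ bookkeeping, most of which the text has already carried out in equations \eqref{eq: lambda1lambda2} and \eqref{eq: lambda1lambda2case2}.

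First I would handle the second subset. Suppose $\lambda_2 = \sum_{i=0}^{u}\overline{r_i}p^i$ with $r_u \neq p-1$. Writing $\lambda_1 - \lambda_2 = \sum_{i \geq 0} r_i p^i$ in base $p$, the relation $r_i + \overline{r_i} = p-1$ for $i \leq u$ gives \eqref{eq: lambda1lambda2} by digit-by-digit addition, hence $\lambda_1 \equiv -1 \pmod{p^{u+1}}$; since $r_u \neq p-1$ forces $\overline{r_u} \neq 0$, the base-$p$ expansion of $\lambda_2$ has length exactly $u+1$, so $l_p(\lambda_2) = u+1$. This is exactly James' criterion in Theorem \ref{thm:JamestheoremonHom}. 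For the converse I would start with James' criterion, set $u+1 := l_p(\lambda_2)$, read off $\overline{r_0},\ldots,\overline{r_u}$ as the digits of $\lambda_2$ (with $\overline{r_u} \neq 0$), and put $r_i := p-1-\overline{r_i}$ for $i \leq u$ plus the higher digits of $\lambda_1$; then $r_u \neq p-1$, placing $\lambda_2$ in the second subset.

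Next I would treat the first subset in the same style. Suppose $\lambda_2 = \sum_{i=0}^{u-1}\overline{r_i}p^i + p^{u+a}$ with $r_u \neq p-1$ and $a \geq 1$. The same digit-by-digit addition gives \eqref{eq: lambda1lambda2case2}: the digits of $\lambda_1$ in positions $0,\ldots,u-1$ are all $p-1$ but the digit in position $u$ equals $r_u \neq p-1$, so $\lambda_1 \equiv -1 \pmod{p^u}$ yet $\lambda_1 \not\equiv -1 \pmod{p^{u+1}}$, and setting $c := \sum_{i=0}^{u-1}\overline{r_i}p^i < p^u$ and $b := u+a > u$ exhibits $\lambda_2 = c + p^b$ as required. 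Conversely, given $\lambda_1$ and $\lambda_2$ satisfying (ii), I would expand $c = \sum_{i=0}^{u-1} c_i p^i$ in base $p$, define $\overline{r_i} := c_i$ for $i<u$, read off $r_i = p-1-c_i$ from the digits of $\lambda_1$ in those positions together with the higher digits of $\lambda_1$ for $i \geq u$, and take $a := b-u \geq 1$ to realize $\lambda_2$ as an element of the first subset.

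One final observation closes the argument: cases (i) and (ii) are disjoint, since (ii) forces $\lambda_1 \not\equiv -1 \pmod{p^{u+1}}$ while $b > u$ implies $l_p(\lambda_2) \geq u+1$, so James' criterion $\lambda_1 \equiv -1 \pmod{p^{l_p(\lambda_2)}}$ must fail. Combined with the one-dimensionality supplied by Lemma 3.2.1, this guarantees that each $\lambda$ satisfies at most one of the two cases and that $\HH^1$ is one-dimensional in precisely those situations. I do not expect a genuine obstacle: the entire argument is combinatorial bookkeeping with base-$p$ carries, and the only care required is running the biconditional in both directions for each of the two subsets of $\Psi_p$.
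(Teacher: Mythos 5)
Your proposal is correct and follows the same route as the paper: reduce to membership in $\Psi_p(\lambda_1-\lambda_2)$ via Lemma 3.2.1 of \cite{HNcohomologyofspechtmodules}, then match the second subset in \eqref{eq: definitionofPsi(r)} with James' criterion (case (i)) and the first subset with case (ii) by base-$p$ digit comparison, exactly as in equations \eqref{eq: lambda1lambda2} and \eqref{eq: lambda1lambda2case2}. The paper only spells out the forward directions and leaves the converses implicit, whereas you run both directions and add the (correct) observation that the two cases are disjoint; this is welcome extra care but not a different method.
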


\begin{exm} In characteristic five, $\HH^1(\Sigma_{54}, S^{(29,25)}) \cong k$. Here choose $u=1, a=2, c=0$ in part (ii) above.
\end{exm}

\begin{exm}
\label{exm: papb}
In  characteristic $p$ suppose $a,b >0$ and $pa \geq p^b$. Then choosing $u=0$ we get $ \HH^1(\Sigma_{d}, S^{(pa, p^b)}) \neq 0$.
\end{exm}

Case $(i)$ in Proposition \ref{prop:twopartpartitions} is a special case of a more general result conjectured in \cite{HNcohomologyofspechtmodules}, which can be proved using a suggestion of Andersen.

\begin{prop}[Andersen]
\label{prop: Homneq0impliesExt1neq0}
Suppose $\HH^0(\Sigma_d, S^\lambda)\neq 0$ and $\lambda \neq (d)$. Then $\HH^1(\Sigma_d, S^\lambda) \neq 0$.
\end{prop}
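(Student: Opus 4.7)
The plan is to translate the question into $B$-cohomology via Proposition~\ref{prop: frobenius reciprocityforH0lambda}(b), so the statement becomes: if $\Hom_B(H^0(d),\lambda)\neq 0$ and $\lambda\neq (d)$, then $\Ext^1_B(H^0(d),\lambda)\neq 0$. Let $\psi\colon H^0(d)\to\lambda$ be the nonzero $B$-homomorphism, which is unique up to scalar by Theorem~\ref{thm:JamestheoremonHom}. Since $L(d)=\soc_G H^0(d)$ has $B$-head equal to the weight $d$ by Proposition~\ref{prop: HeadofLlambdaasBandBrmodule}(a) and $\lambda\neq (d)$, the composition $L(d)\hookrightarrow H^0(d)\xrightarrow{\psi}\lambda$ is zero, so $\psi$ factors through $M:=H^0(d)/L(d)$. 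Applying $\Hom_B(-,\lambda)$ to $0\to L(d)\to H^0(d)\to M\to 0$ and using $\Hom_B(L(d),\lambda)=0$ produces an injection
\[
\Ext^1_B(M,\lambda)\hookrightarrow\Ext^1_B(H^0(d),\lambda),
\]
so it suffices to prove $\Ext^1_B(M,\lambda)\neq 0$.

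The main step is to exhibit a nonsplit $B$-extension of $M$ by $\lambda$. Following Andersen's suggestion, I would use the Lyndon--Hochschild--Serre spectral sequence \eqref{eq: LHSspectralsequence} for the normal subgroup $B_1\triangleleft B$, specialised to compute $\Ext^*_B(M,\lambda)$ in terms of $\Ext^*_{B_1}(M,\lambda)$ regarded as a $B/B_1\cong B$-module via the inverse Frobenius. By Proposition~\ref{prop: Spectralsequencesummary}(a), the resulting five-term exact sequence has edge map $\Ext^1_B(M,\lambda)\to (\Ext^1_{B_1}(M,\lambda))^B$. The weights of $\Ext^1_{B_1}(M,\lambda)$ are governed by Proposition~\ref{prop: degree1Bcoho}(c), which via the Steinberg module $St_1$ links them to the composition factors $L(\mu)$ of $M$ described by Theorem~\ref{thm:Dotystructureofsymmetricpowers} and Lemma~\ref{lem: tells youcompositionfactorsofsympowersinpadicexpansion}. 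The hypothesis $\Hom_B(H^0(d),\lambda)\neq 0$ translates, via James' Theorem~\ref{thm:JamestheoremonHom}, into exactly the $p$-adic congruences on $\lambda$ needed to force the Steinberg weight condition of Proposition~\ref{prop: degree1Bcoho}(c) to be satisfied for a suitable composition factor of $M$, producing a nonzero $B$-invariant element in $\Ext^1_{B_1}(M,\lambda)$.

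The main obstacle is to verify that this candidate class survives the higher differentials of the spectral sequence, lifting to a genuine class in $\Ext^1_B(M,\lambda)$. Concretely, one must show that the differential $(\Ext^1_{B_1}(M,\lambda))^B\to\HH^2(B,\Hom_{B_1}(M,\lambda))$ in the five-term sequence of Proposition~\ref{prop: Spectralsequencesummary}(a) kills the candidate class. This is precisely the role of Lemma~\ref{lem: E11=0} (the lemma whose proof was corrected by the referee): its vanishing conclusion ensures the obstructing $E$-term vanishes, so the $B_1$-level class lifts to a nonzero element of $\Ext^1_B(M,\lambda)$ and hence of $\Ext^1_B(H^0(d),\lambda)\cong\HH^1(\Sigma_d,S^\lambda)$.
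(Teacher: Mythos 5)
Your approach bears no resemblance to the paper's actual proof, and unfortunately it has genuine gaps at exactly the crucial points. The paper's proof is a two-line argument using the universal coefficient theorem: the Specht module $S^\lambda$ has a $\mathbb{Z}$-form $S^\lambda_{\mathbb{Z}}$, and for $\lambda\neq(d)$ one has $(S^\lambda_{\mathbb{Z}})^{\Sigma_d}=0$ because already $(S^\lambda_{\mathbb{Q}})^{\Sigma_d}=0$. The universal coefficient exact sequence in degree $0$ then identifies $\HH^0(\Sigma_d,S^\lambda_k)$ with $\Tor_1^{\mathbb{Z}}(\HH^1(\Sigma_d,S^\lambda_{\mathbb{Z}}),k)$, so the hypothesis $\HH^0\neq 0$ forces $\HH^1(\Sigma_d,S^\lambda_{\mathbb{Z}})$ to have $p$-torsion; since for a finitely generated abelian group $A$ the existence of $p$-torsion forces $A\otimes k\neq 0$, the degree-$1$ universal coefficient sequence gives $\HH^1(\Sigma_d,S^\lambda_k)\neq 0$. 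This is an entirely symmetric-group-theoretic argument, never passing to $GL_n$ or $B$. The ``suggestion of Andersen'' referenced in the statement is precisely this universal coefficient argument, not a spectral-sequence strategy.

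Your proposed $B$-cohomology route is, in fact, explicitly flagged by the author in the open-problems section as something that has not been achieved: ``It would be nice to have a proof of Proposition \ref{prop: Homneq0impliesExt1neq0} using our approach.'' And the reason it is hard is exactly where your write-up becomes vague. Proposition \ref{prop: degree1Bcoho}(c) gives only a \emph{necessary} condition on the weights that can appear in $\Ext^1_{B_r}(L(\lambda),\nu)$; it never asserts that any of those weights actually occur. So the step where James' $p$-adic congruences are said to ``force the Steinberg weight condition to be satisfied \ldots producing a nonzero $B$-invariant element in $\Ext^1_{B_1}(M,\lambda)$'' is a non sequitur: satisfying a necessary condition does not produce a nonzero class. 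All of the spectral-sequence technology assembled in the paper (Propositions \ref{prop: degree1Bcoho}, \ref{prop:SSrelatingextFrobenius}, \ref{prop: Spectralsequencesummary}, Lemmas \ref{lem: E20=0}--\ref{lem: E11=0}) is engineered to prove \emph{vanishing} of certain $E_2$-terms; none of it produces non-vanishing, which is what you need here. Finally, Lemma \ref{lem: E11=0} plays no role for this proposition: it is used in Section 6 to show $\Ext^2_B(Q,p^2\mu)=0$ on the way to the generic cohomology theorem, and its hypotheses (in particular that $\lambda$ not be of the form $p\tau$) do not even match the situation you are trying to place it in. As written, the proposal reduces to an unproved assertion that a candidate class exists and survives, which is precisely the open content of the problem.
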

\begin{proof}
This follows from the universal coefficient theorem. The key observation is that the Specht module is defined over the integers, but when $\lambda \neq (d)$ then $\Hom_{\mathbb{Z}}({\mathbb Z}, S^\lambda_{\mathbb Z})=0$.
\end{proof}

\subsection{}

 The following corollary is immediate from Proposition \ref{prop:twopartpartitions}, and is the motivation for the generalizations proved in the next two sections.
\begin{cor}
\label{cor:2partmotivation} Let $\lambda=(\lambda_1, \lambda_2) \vdash d<p^a$. Then
\begin{itemize}
\item[(a)] $\HH^1(\Sigma_{pd}, S^{p\lambda}) \cong \HH^1(\Sigma_{p^2d}, S^{p^2\lambda}).$
\item[(b)] $\HH^1(\Sigma_d, S^\lambda) \cong \HH^1(\Sigma_{d+p^a}, S^{(\lambda_1+p^a, \lambda_2)})$

\end{itemize}
\end{cor}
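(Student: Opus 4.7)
The plan is to derive both statements from Proposition \ref{prop:twopartpartitions} by showing that its nonvanishing criteria are unchanged under $\lambda \mapsto p^i \lambda$ (for part (a)) and $\lambda_1 \mapsto \lambda_1 + p^a$ (for part (b)). First I would dispose of the trivial case $\lambda_2 = 0$: here every Specht module appearing in the statement is a one-part Specht module and hence trivial, and $\HH^1(\Sigma_n, k) = 0$ in odd characteristic for $n \geq 2$, so both sides vanish. Assume henceforth $\lambda_2 \geq 1$.

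For part (a), apply Proposition \ref{prop:twopartpartitions} to the two-part partition $p^i \lambda$ for $i = 1, 2$. Because $p^i \lambda_1$ is divisible by $p$, it cannot be congruent to $-1$ modulo any $p^k$ with $k \geq 1$; combined with $l_p(p^i \lambda_2) \geq 1$, this kills case (i). In case (ii), the congruence $p^i \lambda_1 \equiv -1 \pmod{p^u}$ forces $u = 0$, and the remaining conditions $c < p^u = 1$, $b > u = 0$ reduce to the second part being a positive power of $p$. Since $p \lambda_2$ is a positive $p$-power if and only if $p^2 \lambda_2$ is (both equivalent to $\lambda_2$ itself being a power of $p$), the groups $\HH^1(\Sigma_{pd}, S^{p\lambda})$ and $\HH^1(\Sigma_{p^2 d}, S^{p^2 \lambda})$ are simultaneously zero or simultaneously one-dimensional, hence isomorphic.

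For part (b), set $\mu = (\lambda_1 + p^a, \lambda_2)$. The hypothesis $d < p^a$ together with $\lambda_2 \geq 1$ forces $\lambda_1 + 1 \leq d < p^a$ and $\lambda_2 < p^a$, so $l_p(\lambda_2) \leq a$, and the parameter $u$ appearing in case (ii) must satisfy $u < a$ both for $\lambda_1$ and, by the same argument applied to $\lambda_1 + p^a + 1 < 2 p^a$, for $\mu$. Whenever $k \leq a$ we have $p^a \equiv 0 \pmod{p^k}$, so $\lambda_1 \equiv -1 \pmod{p^k}$ if and only if $\lambda_1 + p^a \equiv -1 \pmod{p^k}$. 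Taking $k = l_p(\lambda_2)$ identifies James' criterion (case (i)) for $\lambda$ with that for $\mu$, and taking $k \in \{u, u+1\}$ identifies the two congruences in case (ii); the condition on $\lambda_2$ is the same on both sides. Consequently both $\HH^1$ groups vanish or are one-dimensional under precisely the same hypotheses on $\lambda$.

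The only real obstacle is bookkeeping: one must notice that the hypothesis $d < p^a$ forces every relevant $p$-adic modulus $p^k$ to satisfy $k \leq a$, so that the extra summand $p^a$ vanishes modulo $p^k$. Once that is observed, the corollary is an immediate digit-by-digit comparison in Proposition \ref{prop:twopartpartitions}, which is exactly why it appears under the heading of a corollary.
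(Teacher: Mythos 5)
Your argument is correct and is exactly the direct verification the paper intends when it calls the corollary ``immediate from Proposition \ref{prop:twopartpartitions}'': dispose of $\lambda_2=0$ separately, then check that in case (ii) the parameter $u$ is forced to $0$ (part (a)) or is unchanged because $u<a$ and hence $p^a\equiv 0 \pmod{p^u}, \pmod{p^{u+1}}$ (part (b)), and that the condition on $\lambda_2$ transfers in each case. No gap; this matches the paper's route.
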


\section{Generic cohomology for Specht modules?}
\label{section: Generic cohomology for Specht modules}
\subsection{}
For any $G$-module $M$ there is a series of injections:

\begin{equation}
\label{eq: injectionsingenericcohomology}
\HH^i(G,M) \rightarrow \HH^i(G, M^{(1)}) \rightarrow \HH^i(G, M^{(2)}) \rightarrow \cdots.
\end{equation}
This sequence is known \cite{CPSvDK} to stabilize, and the limit is called the generic cohomology of $M$. For example we get injections $\HH^i(G,L(\lambda)) \rightarrow \HH^i(G, L(p\lambda)) \rightarrow \HH^i(G, L(p^2\lambda))$. It is  natural then to have theorems for $G$ which involve multiplying a partition by $p$, as this reflects what happens to the weights after a Frobenius twist.

For the symmetric group, we know of no theorems involving multiplying a partition by $p$. There is nothing that seems to play the role of the Frobenius twist. Moreover, the modules $S^\lambda$ and $S^{p\lambda}$ are not even modules for the same symmetric group. However, for two-part partitions we observed in  Corollary \ref{cor:2partmotivation} that $\HH^1(\Sigma_{pd}, S^{p\mu}) \cong \HH^1(\Sigma_{p^2d}, S^{p^2\mu})$. In this section we generalize this stability result to arbitrary partitions, and show there is a generic cohomology for Specht modules in degree one. The main result is an isomorphism $\HH^1(\Sigma_{pd}, S^{p\lambda}) \cong \HH^1(\Sigma_{p^2d}, S^{p^2\lambda})$.

\subsection{Relating cohomology and Frobenius twists}


We can use the spectral sequence \eqref{eq: spectralsequencetwistingcoho} in our first step towards relating cohomology of $S^\lambda$ and $S^{p\lambda}$.


\begin{lem}
\label{lem: Bcohooftwsit agrees} Let $\mu \vdash d$. Then
$$\Ext^1_B(H^0(d)^{(1)}, p\mu) \cong \Ext^1_B(H^0(d), \mu).$$
\end{lem}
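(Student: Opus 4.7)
The plan is to apply the spectral sequence \eqref{eq: spectralsequencetwistingcoho} with $r=1$, $M_1 = H^0(d)$, and $M_2 = \mu$, so that it takes the form
$$E_2^{i,j}=\Ext^i_B\bigl(H^0(d),\, \mu \otimes \HH^j(B_1, k)^{(-1)}\bigr) \Rightarrow \Ext^{i+j}_B\bigl(H^0(d)^{(1)},\, p\mu\bigr),$$
where we use that the Frobenius twist of the one-dimensional $B$-module of weight $\mu$ is the weight $p\mu$ module. The abutment in total degree one is exactly the left-hand side of the desired isomorphism.

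First I would compute the relevant low-degree $E_2$-terms. Along the bottom row, $\HH^0(B_1,k)=k$, so $E_2^{i,0}=\Ext^i_B(H^0(d),\mu)$; in particular $E_2^{1,0}$ is exactly the right-hand side of the claimed isomorphism. For the column $j=1$, I would invoke Proposition \ref{prop: degree1Bcoho}(b), which gives $\HH^1(B_1,k)=0$, and hence $E_2^{0,1}=\Hom_B(H^0(d),\mu\otimes 0)=0$.

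With these inputs, the plan is to feed everything into the five-term exact sequence of Proposition \ref{prop: Spectralsequencesummary}(a):
$$0 \to E_2^{1,0} \to \Ext^1_B(H^0(d)^{(1)}, p\mu) \to E_2^{0,1} \to E_2^{2,0} \to \Ext^2_B(H^0(d)^{(1)}, p\mu).$$
The vanishing of $E_2^{0,1}$ immediately collapses this to an isomorphism
$$\Ext^1_B(H^0(d),\mu) \;=\; E_2^{1,0} \;\xrightarrow{\sim}\; \Ext^1_B(H^0(d)^{(1)}, p\mu),$$
which is the statement of the lemma.

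There is no real obstacle here beyond assembling the right machinery: the spectral sequence, the identification of its low terms, and the crucial vanishing $\HH^1(B_1,k)=0$ from \cite[II.12.1]{jantzenbook2nded}. If anything, the only subtlety worth flagging in the write-up is the identification $\mu^{(1)}=p\mu$ as a one-dimensional $B$-module, so that the abutment of the spectral sequence really is $\Ext^1_B(H^0(d)^{(1)},p\mu)$.
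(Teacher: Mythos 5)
Your proposal is correct and is essentially identical to the paper's own proof: both set $r=1$, $M_1=H^0(d)$, $M_2=\mu$ in the spectral sequence \eqref{eq: spectralsequencetwistingcoho}, invoke the five-term exact sequence from Proposition \ref{prop: Spectralsequencesummary}(a), and use $\HH^1(B_1,k)=0$ from Proposition \ref{prop: degree1Bcoho}(b) to kill $E_2^{0,1}$ and obtain the isomorphism.
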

\begin{proof}
Take $r=1$ and consider the five-term exact sequence \eqref{eq: 5term} for the spectral sequence \eqref{eq: spectralsequencetwistingcoho}. Choosing $M_1=H^0(d)$ and $M_2=\mu$, we obtain:
$$0 \rightarrow \Ext^1_B(H^0(d), \mu) \rightarrow \Ext^1_B(H^0(d)^{(1)}, p\mu) \rightarrow \Hom_B(H^0(d), H^1(B_1, k)^{(-1)} \otimes \mu) \rightarrow \cdots.$$ Recall from Proposition \ref{prop: degree1Bcoho}(b) that $H^1(B_1, k)=0$, so we have the desired isomorphism.
\end{proof}
Lemma \ref{lem: Bcohooftwsit agrees} can be interpreted as the immediate stabilization of \eqref{eq: injectionsingenericcohomology} when $M=\mu \otimes H^0(d)^*$.

 \begin{rem} If the left side in Lemma \ref{lem: Bcohooftwsit agrees} had $H^0(pd)$ instead of $H^0(d)^{(1)}$ we would have an isomorphism between $\HH^1(\Sigma_d, S^\lambda)$ and $\HH^1(\Sigma_{pd}, S^{p\lambda})$, however this is false in general.
 \end{rem}

Next we prove a technical lemma:

\begin{lem}
\label{lem: weightsbig}
Suppose $\lambda \vdash p^2d$ such that $\lambda \neq p\tau$ for any $\tau \vdash pd$. Suppose further that $[H^0(p^2d) : L(\lambda)] \neq 0$ and that  $\mu \vdash d$ with $\lambda \rhd p^2\mu$. Then:

$$\langle \lambda-p^2\mu, \alpha_i^\vee \rangle \geq p^2$$ for some $i$.
\end{lem}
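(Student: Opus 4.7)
The plan is to locate the required simple coroot by studying the partial-sum sequence of $\nu := \lambda - p^2\mu$, and then to use Lemma~\ref{lem: tells youcompositionfactorsofsympowersinpadicexpansion}(a) to verify that the corresponding pairing is at least $p^2$. Set $s_k := \sum_{j=1}^k \nu_j$. The hypothesis $\lambda \rhd p^2\mu$ gives $s_0 = s_n = 0$, $s_k \geq 0$ for all $k$, and $s_k > 0$ for at least one $k$ (because strict dominance forces $\nu \neq 0$). I will take $k_0$ to be the largest index at which $(s_k)$ attains its maximum; then $1 \leq k_0 \leq n-1$, and by choice of $k_0$ we have $\nu_{k_0} = s_{k_0} - s_{k_0-1} \geq 0$ while $\nu_{k_0+1} = s_{k_0+1} - s_{k_0} < 0$. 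The task reduces to showing $\langle \nu, \alpha_{k_0}^\vee \rangle = \nu_{k_0} - \nu_{k_0+1} \geq p^2$ for this particular $k_0$.

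To separate the large and small parts of $\nu_i$, I would decompose $\lambda_i = p^2 q_i + r_i$ with $r_i \in [0,p^2)$, so that $\nu_i = p^2(q_i - \mu_i) + r_i$ and $r_i = a_{i,0} + p\cdot a_{i,1}$ records the two lowest base-$p$ digits of $\lambda_i$. The inequality $\nu_{k_0} \geq 0$ together with $r_{k_0} < p^2$ forces $q_{k_0} - \mu_{k_0} \geq 0$, and the strict inequality $\nu_{k_0+1} < 0$ together with $r_{k_0+1} \geq 0$ forces $q_{k_0+1} - \mu_{k_0+1} \leq -1$. Subtracting gives $(q_{k_0} - \mu_{k_0}) - (q_{k_0+1} - \mu_{k_0+1}) \geq 1$, so the $p^2$-part of $\nu_{k_0} - \nu_{k_0+1}$ already contributes at least $p^2$.

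The main obstacle is then ensuring that the residual $r_{k_0} - r_{k_0+1}$ does not spoil this bound, and here the digit condition is decisive. Lemma~\ref{lem: tells youcompositionfactorsofsympowersinpadicexpansion}(a) forces each column $j$ of the digit matrix $(a_{i,j})_i$ to have the shape $(p-1,\ldots,p-1,c_j,0,\ldots,0)$ with $c_j \in \{0,\ldots,p-2\}$, which is weakly decreasing in the row index. Applied to columns $0$ and $1$ this gives $a_{i,0} \geq a_{i+1,0}$ and $a_{i,1} \geq a_{i+1,1}$ for all $i$, so $r_i$ is non-increasing in $i$ and in particular $r_{k_0} - r_{k_0+1} \geq 0$. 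Combining these yields
\[
\nu_{k_0} - \nu_{k_0+1} = p^2\bigl[(q_{k_0} - \mu_{k_0}) - (q_{k_0+1} - \mu_{k_0+1})\bigr] + (r_{k_0} - r_{k_0+1}) \geq p^2,
\]
as needed. The conceptual heart of the argument is recognizing that the last peak of $(s_k)$ aligns perfectly with the digit monotonicity; notably the hypothesis $\lambda \neq p\tau$ does not appear to be invoked directly, since the strictness in $\lambda \rhd p^2\mu$ already supplies the only non-degeneracy the argument requires.
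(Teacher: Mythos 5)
Your proof is correct and follows essentially the same strategy as the paper's: locate an index $i$ with $\nu_i \geq 0$ and $\nu_{i+1} < 0$, split each $\lambda_i$ into its part below $p^2$ and its $p^2$-quotient, and use the column-monotonicity of base-$p$ digits guaranteed by Lemma~\ref{lem: tells youcompositionfactorsofsympowersinpadicexpansion}(a) to show the residual contribution is nonnegative. The one genuine difference is how the crossing index is produced: the paper first deduces $p \nmid \lambda_1$ (hence $\lambda_1 > p^2\mu_1$) from the hypothesis $\lambda \neq p\tau$ before applying a pigeonhole argument, while you extract the index directly as the last peak of the partial-sum sequence, which needs only strict dominance. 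Your observation that $\lambda \neq p\tau$ is therefore not actually used in the proof of this lemma is correct — that hypothesis matters in Corollary~\ref{cor:weightnotinsteinberg} and Proposition~\ref{prop:vanishingext1B} where the lemma is applied, but the lemma itself holds without it.
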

\begin{proof} Let $\lambda=(\lambda_1, \lambda_2, \ldots, \lambda_n)$ and denote the $p$-adic expansion of $\lambda_i$ by
$\lambda_i=a_{i,0} + a_{i,1}p+ \cdots.$ Since $p \nmid \lambda_1$ by Lemma \ref{lem: tells youcompositionfactorsofsympowersinpadicexpansion}(a), we must have $\lambda_1 >p^2\mu_1$. Since $\lambda$ and $p^2\mu$ both partition $p^2d$, there must be some $i \geq 1$ with $\lambda_i \geq p^2\mu_i$ and $\lambda_{i+1}<p^2\mu_{i+1}$. So let:
\begin{eqnarray*}
  \lambda_i &=& a_{i,0} + a_{i,1}p+p^2t \\
    \lambda_{i+1}&=&a_{i+1,0}+a_{i+1,1}p+p^2s
\end{eqnarray*}
Our assumptions on $\lambda$ and $p^2\mu$ imply that $t \geq p^2\mu_i$ and $s< p^2\mu_{i+1}$. By Lemma \ref{lem: tells youcompositionfactorsofsympowersinpadicexpansion}(a) we have $a_{i,l} \geq a_{i+1,l}$ for all $l$. Thus:

\begin{eqnarray*}
\langle \lambda-p^2\mu, \alpha_i^\vee \rangle &=& (\lambda_i-p^2\mu_i)-(\lambda_{i+1}-p^2\mu_{i+1})\\
&=&a_{i,0}+a_{i,1}p+p^2(t-p^2\mu_i)-a_{i+1,0}-a_{i+1,1}p-p^2(s- p^2\mu_{i+1})\\
& \geq & p^2(t-p^2\mu_i)-p^2(s- p^2\mu_{i+1})\\ &\geq& p^2\\
\end{eqnarray*}\end{proof}

\begin{cor}
\label{cor:weightnotinsteinberg}
Let $\lambda$ and $ p^2\mu$ be as in Lemma \ref{lem: weightsbig} above. Then $\lambda-p^2\mu - (p-1)\rho$ is not a weight in the Steinberg module $St_1=L((p-1)\rho)$.
\end{cor}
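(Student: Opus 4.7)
The plan is to suppose for contradiction that $\lambda - p^2\mu - (p-1)\rho$ is a weight of $St_1$ and derive the contradiction by pairing with the simple coroot $\alpha_i^\vee$ supplied by Lemma~\ref{lem: weightsbig}. I would start from the standard Weyl-character description: from
\[
\chi((p-1)\rho) = e^{(p-1)\rho}\prod_{\alpha > 0}\bigl(1 + e^{-\alpha} + e^{-2\alpha} + \cdots + e^{-(p-1)\alpha}\bigr),
\]
every weight of $St_1 = L((p-1)\rho)$ has the form $(p-1)\rho - \sum_{\alpha > 0}c_\alpha\alpha$ with $0 \leq c_\alpha \leq p-1$. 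Pairing this with $\alpha_i^\vee$ and using that $\langle \alpha,\alpha_i^\vee\rangle \in \{-1,0,1,2\}$ for positive roots in type $A$ gives an explicit ceiling on $\langle \nu,\alpha_i^\vee\rangle$ for weights $\nu$ of $St_1$.

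By Lemma~\ref{lem: weightsbig}, some simple coroot $\alpha_i^\vee$ satisfies $\langle \lambda - p^2\mu,\alpha_i^\vee\rangle \geq p^2$. Since $\langle (p-1)\rho,\alpha_i^\vee\rangle = p-1$, subtracting yields
\[
\langle \lambda - p^2\mu - (p-1)\rho,\alpha_i^\vee\rangle \geq p^2 - (p-1),
\]
so it remains to show that $p^2 - (p-1)$ strictly exceeds the Steinberg ceiling, contradicting the assumption.

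The main obstacle is making the ceiling sharp enough. A crude bound from the description above gives only $(n-1)(p-1)$, which beats $p^2 - (p-1)$ just when $n$ is at most roughly $p+1$. To cover larger $n$ I would switch to the equivalent partial-sum formulation: if $\lambda - p^2\mu - (p-1)\rho$ lies in $St_1$, then writing $\lambda - p^2\mu = \sum_{k<l}d_{kl}(e_k - e_l)$ with $0 \leq d_{kl} \leq p-1$ forces $\sum_{k\leq m}(\lambda_k - p^2\mu_k) \leq m(n-m)(p-1)$ for every $m$. I would then use the rigid $p$-adic digit pattern imposed by $[H^0(p^2d):L(\lambda)]\neq 0$ via Lemma~\ref{lem: tells youcompositionfactorsofsympowersinpadicexpansion}(a), together with the non-divisibility hypothesis $\lambda \neq p\tau$ and the specific structure extracted in the proof of Lemma~\ref{lem: weightsbig} (namely that the pairing of $p^2$ is achieved through the $p^2$-block contribution rather than through the low-order digits), to locate a specific $m$ for which this partial-sum inequality is violated.
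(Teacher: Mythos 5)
Your starting point is the same as the paper's: pair against the coroot $\alpha_i^\vee$ supplied by Lemma~\ref{lem: weightsbig}. But you have correctly spotted a subtlety. The paper's proof is a three-line computation showing $\langle (p-1)\rho - \gamma, \alpha_i^\vee\rangle \leq 2p-2-p^2 < 0$, from which it concludes ``$(p-1)\rho \not\geq \gamma$.'' That inference is not automatic: a nonnegative integer combination of positive roots can pair negatively with a simple coroot (already in type $A_2$, $2\alpha_1$ pairs to $-2$ with $\alpha_2^\vee$, and indeed $(2,4,0)$ is a weight of $St_1$ for $p=3$ with $\langle (2,4,0),\alpha_2^\vee\rangle = 4 > p-1$). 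Your observation that the Steinberg ``ceiling'' on $\langle \nu,\alpha_i^\vee\rangle$ is $(n-1)(p-1)$, which exceeds $p^2-(p-1)$ once $n>p+1$, is exactly this concern made quantitative, and since the Schur algebra reduction forces $n\geq p^2d$, the crude pairing bound really is inadequate in general.

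Where your proposal has a genuine gap is in the proposed repair. Writing $\lambda-p^2\mu = \sum_{k<l} d_{kl}(e_k-e_l)$ with $0\leq d_{kl}\leq p-1$ (equivalently $(p-1)\rho-\gamma=\sum_\alpha(p-1-d_\alpha)\alpha$, a correct reformulation of the weight condition) and taking partial sums gives $\sum_{k\leq m}(\lambda_k-p^2\mu_k) \leq (p-1)m(n-m)$, as you say. But that inequality is essentially never violated here: the left side is bounded by $p^2d$ independently of $n$, while the right side grows with $n$ for every $m$ (and equals $0$ only at $m=0,n$, where the left side also vanishes). So no choice of $m$ gives the contradiction, and the plan to ``locate a specific $m$ for which this partial-sum inequality is violated'' cannot succeed. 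A constraint that actually can fail is the single-coordinate inequality $(\lambda-p^2\mu)_m \geq -\sum_{k<m}d_{km} \geq -(p-1)(m-1)$, since Lemma~\ref{lem: weightsbig}'s choice of $i$ forces $\lambda_{i+1}-p^2\mu_{i+1}$ to be negative by at least a full $p^2$-block in favorable cases; but even this does not close the argument for all admissible $\lambda,\mu$ without further use of the digit rigidity, so the step remains a real gap. In short: you have correctly diagnosed that the naive pairing bound --- which is all the published proof invokes --- is insufficient for large rank, but the partial-sum criterion you reach for in its place is the wrong tool, and the proof is not complete.
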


\begin{proof}
Let $\gamma= \lambda-p^2\mu - (p-1)\rho$ and choose $i$ as in Lemma \ref{lem: weightsbig}. Then
\begin{eqnarray*}
  \langle (p-1)\rho-\gamma, \alpha_i^\vee\rangle&=& \langle2(p-1)\rho - (\lambda-p^2\mu), \alpha_i^\vee\rangle \\
  &=& 2p-2- \langle (\lambda-p^2\mu), \alpha_i^\vee\rangle \\
   &\leq& 2p-2-p^2 {\text{ by Lemma } } \ref{lem: weightsbig}\\
   &=& -p^2+2p-2<0. \\
\end{eqnarray*}
 Thus $(p-1)\rho \not\geq \gamma$ so $\gamma$ is not a weight in $St_1$.
\end{proof}
\begin{rem}
\label{rem: whypdoesnotwork}
We observe that Corollary \ref{cor:weightnotinsteinberg} requires $p^2\mu$ and the corresponding statement for $p\mu$ is false. For example if $p=5$, $\lambda=(9,1)$ and $5\mu=(5,5)$ then $\lambda-5\mu-4\rho$ is a weight in the Steinberg module. This is why our stability theorem requires  comparing $p\mu$ and $p^2\mu$.
\end{rem}

\subsection{} Corollary \ref{cor:weightnotinsteinberg} lets us prove a key vanishing result for $B$ cohomology, which we state next.

\begin{prop}
\label{prop:vanishingext1B}
Let $\lambda \vdash p^2d$ with $[H^0(p^2d):L(\lambda)] \neq 0$. Suppose $\lambda$ is not of the form $p\tau$ and let $\mu \vdash d$. Then:
$$\Ext^1_B(L(\lambda), p^2\mu)=0.$$
\end{prop}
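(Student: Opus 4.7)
My plan is to reduce to the setting of Corollary~\ref{cor:weightnotinsteinberg} and then combine Steinberg's tensor product theorem with the Lyndon--Hochschild--Serre spectral sequence~\eqref{eq: LHSspectralsequence} for $B_1 \trianglelefteq B$.

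First, since $\lambda$ and $p^2\mu$ are both partitions of $p^2d$, dominance and root order agree. If $\lambda \not\unrhd p^2\mu$ then Proposition~\ref{prop: degree1Bcoho}(a) gives $\Ext^1_B(L(\lambda), p^2\mu) = 0$ at once, while the equality $\lambda = p^2\mu$ is forbidden by the hypothesis $\lambda \neq p\tau$. So I may assume $\lambda \rhd p^2\mu$ strictly, which puts us in the setting of Lemma~\ref{lem: weightsbig} and hence Corollary~\ref{cor:weightnotinsteinberg}. I next use Steinberg to write $\lambda = \lambda^0 + p\lambda^1$ with $\lambda^0 \in X_1(T)$, so $L(\lambda) \cong L(\lambda^0) \otimes L(\lambda^1)^{(1)}$; the condition $\lambda \neq p\tau$ says some coordinate of $\lambda$ is not divisible by $p$, forcing the same of $\lambda^0$, i.e.\ $\lambda^0 \not\equiv 0 \pmod p$. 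Applying \eqref{eq: LHSspectralsequence} with $r=1$, $E = L(\lambda^0)$, $M = L(\lambda^1)^{(1)}$, and $V = p^2\mu$ produces
\begin{equation*}
E_2^{i,j} = \Ext^i_{B/B_1}\bigl(L(\lambda^1)^{(1)},\, \Ext^j_{B_1}(L(\lambda^0), p^2\mu)\bigr) \Rightarrow \Ext^{i+j}_B(L(\lambda), p^2\mu),
\end{equation*}
and the five-term sequence \eqref{eq: 5term} reduces the task to showing $E_2^{1,0} = 0$ and $E_2^{0,1} = 0$.

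For $E_2^{1,0}$, Proposition~\ref{prop: HeadofLlambdaasBandBrmodule}(b) shows $\Hom_{B_1}(L(\lambda^0), p^2\mu) = 0$, since its nonvanishing would require $p^2\mu - \lambda^0 \in pX(T)$, contradicting $\lambda^0 \not\equiv 0 \pmod p$. For $E_2^{0,1}$, the weight $p^2\mu$ restricts to the trivial $B_1$-module, so the tensor identity gives $\Ext^1_{B_1}(L(\lambda^0), p^2\mu) \cong \Ext^1_{B_1}(L(\lambda^0), k) \otimes p^2\mu$ as $B$-modules. Andersen's Proposition~\ref{prop: degree1Bcoho}(c) confines the $B$-weights of the first factor to those $p\xi$ for which $\lambda^0 + p\xi - (p-1)\rho$ is a weight of $St_1$; hence the $B$-weights of $\Ext^1_{B_1}(L(\lambda^0), p^2\mu)^{(-1)}$ are of the form $\xi + p\mu$ under the same condition on $\xi$. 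A nonzero element of $E_2^{0,1}$ untwists to a nonzero $B$-map $L(\lambda^1) \to \Ext^1_{B_1}(L(\lambda^0), p^2\mu)^{(-1)}$, and since $L(\lambda^1)$ has simple $B$-head $\lambda^1$, this forces $\lambda^1$ to occur as such a weight. Taking $\xi = \lambda^1 - p\mu$, the constraint becomes that $\lambda - p^2\mu - (p-1)\rho$ is a weight of $St_1$, contradicting Corollary~\ref{cor:weightnotinsteinberg}.

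The hard part, as I see it, is mostly bookkeeping: neither $\lambda$ nor $p^2\mu$ lies in $X_1(T)$, so Andersen's proposition cannot be applied directly, and the Frobenius twists introduced by the Steinberg decomposition $L(\lambda^0) \otimes L(\lambda^1)^{(1)}$ must be tracked carefully so that the weight produced by a hypothetical nonzero element of $E_2^{0,1}$ matches precisely the weight $\lambda - p^2\mu - (p-1)\rho$ that Corollary~\ref{cor:weightnotinsteinberg} excludes from $St_1$. The hypothesis $\lambda \neq p\tau$ plays a double role, producing $\lambda^0 \not\equiv 0 \pmod p$ (used to kill $E_2^{1,0}$) and excluding the degenerate case $\lambda = p^2\mu$ (so that the corollary actually applies for $E_2^{0,1}$).
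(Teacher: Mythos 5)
Your proof is correct and follows essentially the same route as the paper's: reduce to $\lambda \rhd p^2\mu$, apply the Steinberg decomposition $\lambda = \lambda^0 + p\lambda^1$ with $\lambda^0 \neq 0$, run the LHS spectral sequence for $B_1 \lhd B$, kill $E_2^{1,0}$ via Proposition~\ref{prop: HeadofLlambdaasBandBrmodule}(b), and kill $E_2^{0,1}$ by combining Andersen's weight constraint (Proposition~\ref{prop: degree1Bcoho}(c)) with Corollary~\ref{cor:weightnotinsteinberg}. The only cosmetic difference is that you keep $p^2\mu$ in the $V$-slot of the spectral sequence and invoke the tensor identity, whereas the paper moves $-p^2\mu$ into the $M$-slot from the start; the weight bookkeeping lands on the same excluded weight $\lambda - p^2\mu - (p-1)\rho$.
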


\begin{proof} We can assume that $\lambda >p^2\mu$ without loss by Proposition \ref{prop: degree1Bcoho}(a). Write $\lambda=\lambda_{(0)}+p\tau$ with $0 \neq \lambda_{(0)} \in X_1(T)$ and, using the Steinberg tensor product theorem, we have:

$$\Ext^1_B(L(\lambda), p^2\mu) \cong \Ext^1_B(L(\lambda_{(0)}) \otimes L(p\tau) \otimes (-p^2\mu), k).$$ Now consider the spectral sequence \eqref{eq: LHSspectralsequence}  with $B_1 \lhd B$. Set $M=L(p\tau) \otimes -p^2\mu$, $E=L(\lambda_{(0)})$ and $V=k$ to obtain:

\begin{equation}
\label{eq:LHSB1inB}
E_2^{i,j}=\Ext^i_{B/B_1}\left( L(p\tau) \otimes (-p^2\mu), \Ext^j_{B_1}(L(\lambda_{(0)}), k)\right) \Rightarrow \Ext^{i+j}_B(L(\lambda), p^2\mu).
\end{equation}

By Proposition \ref{prop: HeadofLlambdaasBandBrmodule}(b), we know  $\Hom_{B_1}(L(\lambda_{(0)}), k)=0$, and thus the $E_2^{1,0}$ term in \eqref{eq:LHSB1inB} is zero. Now consider the $E_2^{0,1}$ term. For any $\xi \in X(T)$ we have:

$$\Hom_{B/B_1}(L(\tau)^{(1)} \otimes (-p^2\mu), p\xi) \cong \Hom_B(L(\tau) \otimes (-p\mu), \xi) \cong \Hom_B(L(\tau), \xi+p\mu)$$
which, by Proposition \ref{prop: HeadofLlambdaasBandBrmodule}(a), is zero unless $\tau=\xi+p\mu$.

Applying this to the $E_2^{0,1}$ term in \eqref{eq:LHSB1inB}, we see that $E_2^{0,1}$ is zero unless $p\xi=p\tau -p^2\mu$ is a weight in $\Ext^1_{B_1}(L(\lambda_{(0)}), k)$. By Proposition \ref{prop: degree1Bcoho}(c), this can only occur if $\lambda_{(0)} +p\tau-p^2\mu-(p-1)\rho$ is a weight in $St_1$. But this is ruled out by Corollary \ref{cor:weightnotinsteinberg}. Thus the $E_2^{0,1}$ term vanishes as well, and so $\Ext^1_B(L(\lambda), p^2\mu)=0$ by Proposition \ref{prop: Spectralsequencesummary}(b).

\end{proof}

\subsection{Twisted symmetric powers} Our next observation is that the twisted symmetric power $H^0(d)^{(1)}$ embeds nicely in $H^0(pd)$ with cokernel containing no simple modules of the form $L(p\tau)$.

\begin{prop}
\label{prop: H0twistedintoH0pd} There is a short exact sequence
\begin{equation}
    \label{eq: SESH0dintoH0pd}
    0 \rightarrow H^0(d)^{(1)} \rightarrow H^0(pd) \rightarrow Q \rightarrow 0
\end{equation}
where for all $\tau \vdash d$, $[Q: L(p\tau)]=0$.
\end{prop}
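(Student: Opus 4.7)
The plan is to exhibit a concrete embedding and then read off the statement about composition factors from the multiplicity formula already in hand.

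For the embedding, I would use the $G$-equivariant ``$p$th power'' map
$\varphi \colon V^{(1)} \to S^p V$ defined by $v \mapsto v^p$. This is well-defined and additive in characteristic $p$ because in the symmetric algebra $(v+w)^p = v^p + w^p$ (the intermediate binomial coefficients vanish mod $p$), and Frobenius-semilinear because $(cv)^p = c^p v^p$, so it is genuinely linear as a map on the twist $V^{(1)}$. It is $G$-equivariant since $g\cdot v^p = (g\cdot v)^p$ picks up precisely the $p$th-power-of-matrix-entries action that defines the Frobenius twist on the target. Extending multiplicatively through the identification $(S^d V)^{(1)} \cong S^d(V^{(1)})$ gives a $G$-module map
\[
\Phi \colon H^0(d)^{(1)} \cong S^d(V^{(1)}) \longrightarrow S^{pd} V \cong H^0(pd),
\qquad v_1^{\beta_1}\cdots v_n^{\beta_n} \mapsto v_1^{p\beta_1}\cdots v_n^{p\beta_n}.
\]
This sends the Doty-style monomial basis of $H^0(d)^{(1)}$ (indexed by $\beta \in B(d)$, with weight $p\beta$) injectively to distinct monomial basis vectors of $H^0(pd)$ (indexed by $p\beta \in B(pd)$), so $\Phi$ is injective. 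Let $Q := \operatorname{coker}\Phi$, giving the short exact sequence \eqref{eq: SESH0dintoH0pd}.

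To control the composition factors of $Q$, for any $\tau \vdash d$ I would compare multiplicities on both ends of the sequence. Lemma \ref{lem: tells youcompositionfactorsofsympowersinpadicexpansion}(b) gives
\[
[H^0(pd) : L(p\tau)] = [H^0(d) : L(\tau)],
\]
while the fact that Frobenius twist sends $L(\tau)$ to $L(p\tau)$ and preserves composition multiplicities gives
\[
[H^0(d)^{(1)} : L(p\tau)] = [H^0(d) : L(\tau)].
\]
Additivity of composition multiplicities on the short exact sequence \eqref{eq: SESH0dintoH0pd} then forces $[Q : L(p\tau)] = 0$ for every $\tau \vdash d$, as required.

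I do not anticipate a real obstacle: the only delicate point is the $G$-equivariance and additivity of the $p$th-power map $V^{(1)} \to S^p V$, and both are standard characteristic-$p$ identities in the symmetric algebra. Everything else is bookkeeping on weights plus the multiplicity formula of Lemma \ref{lem: tells youcompositionfactorsofsympowersinpadicexpansion}(b).
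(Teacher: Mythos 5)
Your proof is correct, but it takes a genuinely different route from the paper's. The paper gets the embedding $H^0(d)^{(1)} \hookrightarrow H^0(pd)$ purely homologically: it applies $\Hom_G(-, H^0(pd))$ to the exact sequence $0 \to L(pd) \to H^0(d)^{(1)} \to U \to 0$, uses the vanishing $\Ext^1_G(U, H^0(pd))=0$ (which follows from \cite[II.4.14]{jantzenbook2nded} because no weight of $U$ dominates $(pd)$) to conclude $\Hom_G(H^0(d)^{(1)}, H^0(pd)) \cong k$, and then a socle comparison shows the unique-up-to-scalar map is injective; the statement about $Q$ then follows from Lemma \ref{lem: tells youcompositionfactorsofsympowersinpadicexpansion}(b) and the multiplicity-freeness of $H^0(pd)$. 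You instead \emph{construct} the embedding explicitly as the $p$th-power map on the monomial basis, read off injectivity directly, and get $[Q:L(p\tau)]=0$ from additivity of composition multiplicities together with Lemma \ref{lem: tells youcompositionfactorsofsympowersinpadicexpansion}(b) and the standard fact that Frobenius twist is exact and sends $L(\tau)$ to $L(p\tau)$. What your approach buys: it is completely explicit (which would be useful for the problem in Section 7.1 about realizing the extensions concretely) and it avoids any appeal to $\Ext$-vanishing or to $H^0(pd)$ being multiplicity-free. What the paper's approach buys: it is shorter, requires no construction, and the same $\Ext$-vanishing template is reused elsewhere (Lemma \ref{lem: extvanish}), so it fits the overall strategy more uniformly.

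One small caution in your write-up: the phrase that $G$-equivariance ``picks up the Frobenius twist \emph{on the target}'' is backwards --- the twist sits on the \emph{source} $V^{(1)}$, not on $S^pV$. More precisely, with Jantzen's convention $V^{(1)} = V \otimes_{k,F} k$, the $k$-linear $G$-map is $v\otimes c \mapsto c\,v^p$; on the basis this sends $e_i \mapsto e_i^p$, and one checks $\Phi(g\cdot(e_i\otimes 1)) = \sum_j g_{ji}^p e_j^p = g\cdot e_i^p$ using that $g$ acts on $V^{(1)}$ through $F(g)$. Your statement about semilinearity making the map linear on the twist is the right idea; just be careful which side carries the twist.
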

\begin{proof} Apply $\Hom_G(-, H^0(pd))$ to the sequence $0 \rightarrow L(pd) \rightarrow H^0(d)^{(1)} \rightarrow U \rightarrow 0$ to obtain:
$$ 0 \rightarrow \Hom_G(H^0(d)^{(1)}, H^0(pd)) \rightarrow k \rightarrow \Ext^1_G(U, H^0(pd)).$$ But $\Ext^1_G(U, H^0(pd))=0$ since $(pd)$ is not dominated by any weight in $U$ (see \cite[II.4.14]{jantzenbook2nded}). Thus $ \Hom_G(H^0(d)^{(1)}, H^0(pd)) \cong k$ and by comparing socles we see the map must be an injection. The statement about $Q$ is immediate  by Lemma \ref{lem: tells youcompositionfactorsofsympowersinpadicexpansion}(b) since $H^0(pd)$ is multiplicity free.
\end{proof}

Now consider  \eqref{eq: SESH0dintoH0pd} with $pd$ and $p^2d$, and suppose $p^2\mu \vdash p^2d$. Applying $\Hom_B(-, p^2\mu)$ to \eqref{eq: SESH0dintoH0pd} we obtain:

\begin{equation}
    \label{eq: LESBcoho}
     \cdots \rightarrow \Ext^1_B(Q, p^2\mu) \rightarrow \Ext^1_B(H^0(p^2d), p^2\mu) \rightarrow \Ext^1_B(H^0(pd)^{(1)},p^2\mu) \rightarrow \Ext^2_B(Q, p^2\mu)\rightarrow \cdots
\end{equation}

We will show, in Lemma \ref{lem:B1cohovanish} and Proposition  \ref{prop:ext2withQ=0} below that the first and last term in \eqref{eq: LESBcoho} are zero.

\begin{lem}
\label{lem:B1cohovanish}
Let $Q$ be as in \eqref{eq: LESBcoho}. Then $\Ext^1_B(Q, p^2\mu)=0$.
\end{lem}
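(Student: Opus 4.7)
The plan is to reduce the vanishing for $Q$ to the vanishing for its composition factors, and then invoke Proposition \ref{prop:vanishingext1B}.

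First I would analyze the composition factors of $Q$. Since $Q$ is a quotient of $H^0(p^2d)$, every composition factor of $Q$ is also a composition factor of $H^0(p^2d)$; in particular, if $L(\lambda)$ appears in $Q$ then $[H^0(p^2d):L(\lambda)] \neq 0$. Moreover, by the defining property of $Q$ in \eqref{eq: SESH0dintoH0pd}, no composition factor of $Q$ is of the form $L(p\tau)$ for $\tau \vdash pd$. (Strictly speaking, Proposition \ref{prop: H0twistedintoH0pd} rules out $L(p\tau)$ for $\tau \vdash d$ in a cokernel arising from $H^0(d)^{(1)} \hookrightarrow H^0(pd)$; applied with $pd$ in place of $d$, the corresponding statement with $\tau \vdash pd$ is what we need here.) Thus every composition factor $L(\lambda)$ of $Q$ satisfies both hypotheses of Proposition \ref{prop:vanishingext1B} with respect to $\mu \vdash d$, giving
$$\Ext^1_B(L(\lambda), p^2\mu) = 0.$$

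The second step is a routine dévissage. Choose a composition series
$$0 = Q_0 \subset Q_1 \subset \cdots \subset Q_n = Q$$
with $Q_i/Q_{i-1} \cong L(\lambda_i)$. I proceed by induction on $n$, the case $n=1$ being exactly the vanishing above. For the inductive step, the short exact sequence $0 \to Q_{n-1} \to Q \to L(\lambda_n) \to 0$ yields a piece of the long exact sequence
$$\Ext^1_B(L(\lambda_n), p^2\mu) \to \Ext^1_B(Q, p^2\mu) \to \Ext^1_B(Q_{n-1}, p^2\mu),$$
in which the outer terms vanish by the single-factor case and the inductive hypothesis respectively; hence the middle term vanishes as well.

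I do not expect any serious obstacle here, since Proposition \ref{prop:vanishingext1B} has already packaged the hard vanishing work. The only point requiring a small amount of care is the identification of the composition factors of $Q$, i.e.\ confirming that they are precisely the $L(\lambda)$ with $[H^0(p^2d):L(\lambda)] \neq 0$ and $\lambda$ not of the form $p\tau$; this follows from the multiplicity-freeness of $H^0(p^2d)$ (via Lemma \ref{lem: tells youcompositionfactorsofsympowersinpadicexpansion}(b) and Theorem \ref{thm:Dotystructureofsymmetricpowers}) exactly as in the proof of Proposition \ref{prop: H0twistedintoH0pd}.
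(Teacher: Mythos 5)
Your proof is correct and matches the paper's own argument: the paper likewise observes that every composition factor $L(\lambda)$ of $Q$ satisfies the hypotheses of Proposition \ref{prop:vanishingext1B} (via Proposition \ref{prop: H0twistedintoH0pd} applied with $pd$ in place of $d$) and then concludes by dévissage, which it leaves implicit but you have spelled out. Your parenthetical about reindexing $\tau \vdash d$ to $\tau \vdash pd$ is a reasonable point of care and resolves correctly, since any $p\tau \vdash p^2d$ forces $\tau \vdash pd$.
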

\begin{proof}
Let $[Q: L(\lambda)]\neq 0$. By Proposition \ref{prop: H0twistedintoH0pd} we know $\lambda$ is not of the form $p\tau$. The result then follows by Proposition \ref{prop:vanishingext1B}.

\end{proof}

\subsection{\textbf{Analyzing $\Ext^2_B(Q, p^2\mu)$}}
 Next we prove the analogue of Proposition \ref{prop:vanishingext1B} for $\Ext^2_B(L(\lambda), p^2\mu)$, which will require even more intricate spectral sequence calculations. Let $\lambda = \lambda_{(0)} + p\tau$ with $0 \neq \lambda_{(0)} \in X_1(T)$ as before and consider the spectral sequence \eqref{eq:LHSB1inB}. We will prove $\Ext^2_B(L(\lambda), p^2\mu)$  is zero by showing the terms $E_2^{0,2}, E_2^{1,1}$ and $E_2^{2,0}$ all vanish, and applying Proposition \ref{prop: Spectralsequencesummary}(b).

\begin{lem}
\label{lem: E20=0}
The $E_2^{2,0}$ term in \eqref{eq:LHSB1inB} is zero.
\end{lem}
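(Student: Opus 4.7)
The plan is simply to read off the coefficient module appearing in the $E_2^{2,0}$ term of the Lyndon--Hochschild--Serre spectral sequence \eqref{eq:LHSB1inB} and observe that it already vanishes. By definition,
$$E_2^{2,0} = \Ext^2_{B/B_1}\!\bigl(L(p\tau) \otimes (-p^2\mu),\ \Hom_{B_1}(L(\lambda_{(0)}), k)\bigr),$$
so it suffices to verify that the inner $B_1$-Hom is zero; once that is done, the outer $\Ext^2_{B/B_1}$ is automatically zero regardless of the first argument.

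To show $\Hom_{B_1}(L(\lambda_{(0)}),k)=0$, I would apply Proposition \ref{prop: HeadofLlambdaasBandBrmodule}(b) with $r=1$, source $L(\lambda_{(0)})$, and target $\nu=0$. That proposition says the Hom group is nonzero only when there exists $\nu_1 \in X(T)$ with $0 = \lambda_{(0)} + p\nu_1$, i.e.\ only when $\lambda_{(0)} \in pX(T)$. But $\lambda_{(0)}$ is the nonzero $p$-restricted factor coming from the Steinberg tensor product decomposition $\lambda = \lambda_{(0)} + p\tau$ (with $0 \neq \lambda_{(0)} \in X_1(T)$), and $X_1(T)$ meets $pX(T)$ only in $0$. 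Hence the coefficient module vanishes and $E_2^{2,0}=0$.

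There is no real obstacle here: this is precisely the argument used in the proof of Proposition \ref{prop:vanishingext1B} to kill the $E_2^{1,0}$ term, and it applies uniformly to every $E_2^{i,0}$ in \eqref{eq:LHSB1inB}. The genuinely nontrivial vanishings in this sequence of lemmas will be those of $E_2^{1,1}$ and $E_2^{0,2}$, which require understanding $\Ext^1_{B_1}(L(\lambda_{(0)}),k)$ as a $B$-module via Proposition \ref{prop: degree1Bcoho}(c) and ruling out Steinberg weights through a suitable generalization of Corollary \ref{cor:weightnotinsteinberg}; the present lemma about $E_2^{2,0}$ is essentially free.
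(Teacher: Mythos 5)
Your argument is exactly the paper's: the $E_2^{2,0}$ coefficient is $\Hom_{B_1}(L(\lambda_{(0)}),k)$, which vanishes by Proposition \ref{prop: HeadofLlambdaasBandBrmodule}(b) since $0\neq\lambda_{(0)}\in X_1(T)$. Correct, and you even correctly identify which of the three vanishings in this section carry the real content.
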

\begin{proof} This is immediate since $\lambda_{(0)} \neq 0$  implies $\Hom_{B_1}(L(\lambda_{(0)}), k)=0$ by Proposition \ref{prop: HeadofLlambdaasBandBrmodule}(b).
\end{proof}

To show the $E_2^{1,1}$ term is zero we extend an argument of Andersen's. For $\epsilon \in X_1(T)$ he defined (\cite[p.495]{AndersenExtensionsofmodulesforalgebraicgroups}) $R(\epsilon)$ by the exact sequence

\begin{equation}
\label{eq:andersenexactseqdefinignR}
0 \rightarrow \epsilon \rightarrow St_1 \otimes [(p-1)\rho + \epsilon] \rightarrow R(\epsilon) \rightarrow 0.
\end{equation}
We will use this to prove:

\begin{lem}
\label{lem: E02=0}
The $E_2^{0,2}$ term in \eqref{eq:LHSB1inB} is zero.
\end{lem}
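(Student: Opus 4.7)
The plan is to shift the argument of Proposition \ref{prop:vanishingext1B} up by one cohomological degree, using Andersen's sequence \eqref{eq:andersenexactseqdefinignR} with $\epsilon = 0$ as the shifting device. Using the Frobenius identification $B/B_1 \cong B$, the $E_2^{0,2}$ term unpacks to $\Hom_B(L(\tau) \otimes (-p\mu), N)$, where $N$ is the $B$-module satisfying $N^{(1)} \cong \Ext^2_{B_1}(L(\lambda_{(0)}), k)$. By Proposition \ref{prop: HeadofLlambdaasBandBrmodule}(a), this space vanishes provided $p\tau - p^2\mu$ is not a $B$-weight of $\Ext^2_{B_1}(L(\lambda_{(0)}), k)$.

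Applying $\Hom_{B_1}(L(\lambda_{(0)}), -)$ to $0 \to k \to St_1 \otimes (p-1)\rho \to R(0) \to 0$ and using that $St_1 \otimes (p-1)\rho$ is $B_1$-injective yields
$$\Ext^2_{B_1}(L(\lambda_{(0)}), k) \cong \Ext^1_{B_1}(L(\lambda_{(0)}), R(0)).$$
The module $R(0)$ has a $B$-filtration with one-dimensional quotients of weights $\gamma + (p-1)\rho$, where $\gamma$ ranges over the weights of $St_1$. For any $\nu \in X(T)$, writing $\nu = \nu_0 + p\eta$ with $\nu_0 \in X_1(T)$ and pulling the $B_1$-trivial factor $p\eta$ out of $\Ext^1_{B_1}$ extends Proposition \ref{prop: degree1Bcoho}(c) to show that the $B$-weights of $\Ext^1_{B_1}(L(\lambda_{(0)}), \nu)$ are contained in $\{p\xi : \lambda_{(0)} - \nu + p\xi - (p-1)\rho \text{ is a weight of } St_1\}$. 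Applying this stage by stage along the filtration then controls the weights of $\Ext^1_{B_1}(L(\lambda_{(0)}), R(0))$.

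Taking $p\xi = p\tau - p^2\mu$ and $\nu = \gamma + (p-1)\rho$ reduces the problem to showing that
$$\delta := \lambda - p^2\mu - 2(p-1)\rho - \gamma$$
is not a weight of $St_1$, for every weight $\gamma$ of $St_1$. I mimic Corollary \ref{cor:weightnotinsteinberg}: choose $i$ from Lemma \ref{lem: weightsbig} with $\langle \lambda - p^2\mu, \alpha_i^\vee \rangle \geq p^2$, and use the bound $|\langle \gamma, \alpha_i^\vee \rangle| \leq p-1$ valid for all weights of $St_1$. Then
$$\langle (p-1)\rho - \delta, \alpha_i^\vee \rangle \leq (p-1) - \bigl(p^2 - 2(p-1) - (p-1)\bigr) = -(p-2)^2 < 0,$$
which forces $(p-1)\rho \not\geq \delta$ and therefore rules out $\delta$ as a weight of $St_1$. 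The main obstacle is the tightness of this estimate compared to Corollary \ref{cor:weightnotinsteinberg}: the extra $(p-1)\rho$ appearing in $\delta$ and the extra Steinberg weight $\gamma$ together consume one full $(p-1)$ of the slack, but the quadratic term $p^2$ supplied by Lemma \ref{lem: weightsbig} keeps the inequality strict for every $p \geq 3$.
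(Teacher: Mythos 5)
Your proof is correct and follows essentially the same route as the paper: both use Andersen's sequence \eqref{eq:andersenexactseqdefinignR} with $\epsilon = 0$ and $B_1$-injectivity of the Steinberg module to shift $\Ext^2_{B_1}(L(\lambda_{(0)}), k)$ down to $\Ext^1_{B_1}(L(\lambda_{(0)}), R(0))$, then apply Lemma \ref{lem: weightsbig} to derive the $-(p-2)^2$ estimate. The only cosmetic difference is that you subtract the Steinberg weight $\gamma$ and rule out $\delta$ as a weight of $St_1$ weight-by-weight along a filtration of $R(0)$, whereas the paper adds $\nu \in St_1$ and rules out $\omega = \lambda - p^2\mu - 2(p-1)\rho$ as a weight of $St_1 \otimes St_1$; these are equivalent, and your version is slightly more careful in spelling out the extension of Proposition \ref{prop: degree1Bcoho}(c) to non-restricted second arguments.
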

\begin{proof}
The $E_2^{0,2}$ term is:
\begin{equation}
\label{eq: e}
\Hom_{B/B_1}\left(L(p\tau) \otimes (-p^2\mu), \Ext^2_{B_1}(L(\lambda_{(0)}), k)\right).
\end{equation}
We will show that $\Ext^2_{B_1}(L(\lambda_{(0)}), k)=0$ cannot have $p\tau-p^2\mu$ as a weight.

Apply $\Hom_{B_1}(L(\lambda_{(0)}), -)$ to \eqref{eq:andersenexactseqdefinignR} with $\epsilon=0$. Using the fact that $St_1$ is injective we obtain:
$$\Ext^2_{B_1}(L(\lambda_{(0)}), k) \cong \Ext^1_{B_1}(L(\lambda_{(0)}), R(0)).$$ Suppose $\sigma$ is a weight in $R(0)$, so $\nu:=\sigma-(p-1)\rho \in St_1$. If
$\Ext^1_{B_1}(L(\lambda_{(0)}), \sigma)$ has a weight $p\tau-p^2\mu$ in it, then by  Proposition \ref{prop: degree1Bcoho}(c) we must have
$$\lambda_{(0)} - \sigma+p\tau-p^2\mu-(p-1)\rho \in St_1.$$ Then adding $\nu\in St_1$ we must obtain a weight in $St_1 \otimes St_1$. Thus
$$\omega:=\lambda-p^2\mu -2(p-1)\rho \in St_1 \otimes St_1.$$ The module $St_1 \otimes St_1$  has highest weight $2(p-1)\rho$. But choosing $i$ as in Lemma \ref{lem: weightsbig}, we have:
\begin{eqnarray*}
  \langle 2(p-1)\rho -\omega, \alpha_i^\vee \rangle&=&\langle 4(p-1)\rho-(\lambda-p^2\mu), \alpha_i^\vee\rangle\\
  &=& 4p-4 - \langle \lambda -p^2\mu, \alpha_i^\vee \rangle \\
   &\leq& 4p-4-p^2 \\
   &=& -p^2+4p-4 < 0 .
\end{eqnarray*}
Thus $\omega$ can not be a weight in $St_1 \otimes St_1$, so $p\tau-p^2\mu$ is not a weight in $\Ext^1_{B_1}(L(\lambda_{(0)}),k)$, which shows that \eqref{eq: e} is zero, as desired.
\end{proof}

Before tackling the $E_2^{1,1}$ term we prove a preliminary lemma:

\begin{lem}
\label{lem:weighofzetaissmall}
Let $\zeta \in X(T)$ and suppose $\lambda_{(0)}+p\zeta  -(p-1)\rho \in St_1$. Then $0 \leq \langle \zeta, \alpha^\vee \rangle \leq 1$ for all $\alpha \in S$.
\end{lem}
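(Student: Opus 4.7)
The plan is to fix a single simple root $\alpha \in S$ at a time and extract information by pairing the hypothesis with the coroot $\alpha^\vee$. Setting $a := \langle \lambda_{(0)}, \alpha^\vee \rangle$ and $z := \langle \zeta, \alpha^\vee \rangle$, the condition $\lambda_{(0)} \in X_1(T)$ gives $0 \le a \le p-1$, and pairing $\sigma := \lambda_{(0)} + p\zeta - (p-1)\rho$ with $\alpha^\vee$ yields the identity $\langle \sigma, \alpha^\vee \rangle = a + pz - (p-1)$. The desired conclusion $z \in \{0,1\}$ is therefore equivalent to showing that $\langle \sigma, \alpha^\vee \rangle \in \{a-(p-1),\, a+1\}$.

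Next I would use the Weyl character formula for $St_1 = L((p-1)\rho)$, which gives the product expansion
\[
\chi(St_1) = e^{(p-1)\rho} \prod_{\beta > 0}\bigl(1 + e^{-\beta} + \cdots + e^{-(p-1)\beta}\bigr).
\]
Consequently every weight of $St_1$ can be written as $(p-1)\rho - \sum_{\beta > 0} m_\beta \beta$ with $0 \le m_\beta \le p-1$, and pairing with $\alpha^\vee$ gives $\langle \sigma, \alpha^\vee \rangle = (p-1) - \sum_\beta m_\beta \langle \beta, \alpha^\vee \rangle$. Equating the two expressions for $\langle \sigma, \alpha^\vee \rangle$ produces $\sum_\beta m_\beta \langle \beta, \alpha^\vee \rangle = 2(p-1) - a - pz$.

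The key step is then to exploit the Cartan-integer structure of $\langle \beta, \alpha^\vee \rangle$ — which in the simply-laced case equals $2$ precisely when $\beta = \alpha$ and otherwise lies in $\{-1,0,1\}$ — together with the coefficient bounds $0 \le m_\beta \le p-1$ and the mod-$p$ congruence $\sum_\beta m_\beta \langle \beta, \alpha^\vee \rangle \equiv -a-2 \pmod p$ forced by $\zeta \in X(T)$. Isolating the contribution $2m_\alpha$ of the root $\alpha$ itself, and invoking the congruence, is what should force the integer $z$ to lie in $\{0,1\}$.

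The main obstacle is that for root systems of rank greater than one, the non-simple positive roots can make both positive and negative contributions to $\sum_\beta m_\beta \langle \beta, \alpha^\vee \rangle$, so a crude pointwise bound is too weak. The delicate part is to leverage the mod-$p$ constraint coming from $\zeta \in X(T)$ to rule out the values $z \le -1$ and $z \ge 2$, which correspond to the required sum lying in ranges that are not simultaneously attainable subject to the combined constraints on the $m_\beta$.
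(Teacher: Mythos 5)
Your proof does not close: the final sentence, asserting that the sum $\sum_\beta m_\beta\langle\beta,\alpha^\vee\rangle$ cannot land in the ranges forced by $z\le -1$ or $z\ge 2$, is left unproven, and in fact it cannot be proven, because the obstacle you flag is genuine. The paper's own proof is exactly the ``crude pointwise bound'' you distrust: it reads off from ``the weights of $St_1$ lie between $-(p-1)\rho$ and $(p-1)\rho$'' the inequality $-(p-1)\le\langle\sigma,\alpha^\vee\rangle\le p-1$ for $\sigma=\lambda_{(0)}+p\zeta-(p-1)\rho$, from which $0\le\langle\zeta,\alpha^\vee\rangle\le 1$ is immediate. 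The dominance statement $-(p-1)\rho\le\sigma\le(p-1)\rho$ is true, but for a root system of rank $\ge 2$ it does not imply the coroot-pairing bound, and the pairing bound is false: the $W$-conjugates of $(p-1)\rho$ are weights of $St_1$, and for $GL_n$ they realize $\langle\sigma,\alpha^\vee\rangle$ as large as $(p-1)(n-1)$ in absolute value. So both the paper's one-line argument and your attempted refinement work only in rank one.

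Here is an explicit failure. Take $GL_3$, $p=3$, $\rho=(1,0,-1)$, $\lambda_{(0)}=(2,1,0)\in X_1(T)$ (nonzero and restricted, with coroot pairings $(1,1)$), and $\zeta=(0,-1,0)$. Then
\[
\lambda_{(0)}+p\zeta-(p-1)\rho \;=\; (2,1,0)+(0,-3,0)-(2,0,-2)\;=\;(0,-2,2)\;=\;s_2s_1\cdot(p-1)\rho,
\]
which is a weight of $St_1$, yet $\langle\zeta,\alpha_2^\vee\rangle=-1\notin\{0,1\}$. In your Weyl-product bookkeeping this corresponds to $(0,-2,2)=(p-1)\rho-(p-1)\alpha_2-(p-1)(\alpha_1+\alpha_2)$, so $m_\beta\in\{0,p-1\}$ and $\sum_\beta m_\beta\langle\beta,\alpha_2^\vee\rangle=3(p-1)=2(p-1)-a-pz$ with $a=1$, $z=-1$: the $\bmod\ p$ congruence does not rule this out. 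Thus the lemma as stated, for arbitrary $\zeta\in X(T)$, is false once the rank is at least two; any rescue of the argument in the proof of Lemma \ref{lem: E11=0} has to use more about which $p\zeta$ actually occur as weights of $\Ext^1_{B_1}(L(\lambda_{(0)}),k)$ than the necessary condition in Proposition \ref{prop: degree1Bcoho}(c) supplies.
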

\begin{proof}
Since $\lambda_{(0)} \in X_1(T)$ we know $0 \leq \langle \lambda_{(0)}, \alpha^\vee \rangle <p.$ We also know the  weights in $St_1$ lie between $-(p-1)\rho$ and $(p-1)\rho$. Now just check the corresponding inequalities for  $\langle \zeta, \alpha^\vee \rangle$.
\end{proof}

\begin{lem}
\label{lem: E11=0}
The $E_2^{1,1}$ term:

$$E_2^{1,1}=\Ext^1_{B/B_1}\left(L(p\tau) \otimes (-p^2\mu), \Ext^1_{B_1}(L(\lambda_{(0)},k)\right)$$in \eqref{eq:LHSB1inB} is zero.
\end{lem}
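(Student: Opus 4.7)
The plan is to filter $N := \Ext^1_{B_1}(L(\lambda_{(0)}),k)$ as a $B/B_1$-module by one-dimensional weight subquotients and reduce to a vanishing assertion for each weight. Since $B$ is solvable and $N$ is finite-dimensional, such a filtration exists with each subquotient a one-dimensional $B/B_1$-module of weight $p\xi$; by Proposition \ref{prop: degree1Bcoho}(c), every such $\xi$ satisfies the condition that $\lambda_{(0)} + p\xi - (p-1)\rho$ is a weight of $St_1$. By induction on the filtration length and the long exact sequence for $\Ext_{B/B_1}$-groups, it suffices to show $\Ext^1_{B/B_1}(L(p\tau) \otimes (-p^2\mu), p\xi) = 0$ for each such $\xi$.

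Via the Frobenius identification $B/B_1 \cong B$, this last group becomes $\Ext^1_B(L(\tau) \otimes (-p\mu), \xi) \cong \Ext^1_B(L(\tau), \xi + p\mu)$. By Proposition \ref{prop: degree1Bcoho}(a) it vanishes whenever $\tau \ngeq \xi + p\mu$, so the task reduces to ruling out the dominance $\tau \geq \xi + p\mu$. I plan to do this by comparing coordinate sums. Suppose for contradiction $\tau \geq \xi + p\mu$; then $\tau - \xi - p\mu$ is a nonnegative integer combination of positive roots $e_i - e_j$ ($i<j$), each of which has coordinate sum zero, giving $|\tau| - |\xi| - pd = 0$. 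From $\lambda = \lambda_{(0)} + p\tau$ and $|\lambda| = p^2 d$ one gets $|\tau| = pd - |\lambda_{(0)}|/p$, and hence $|\xi| = -|\lambda_{(0)}|/p$. On the other hand, every weight of the simple $GL_n$-module $St_1 = L((p-1)\rho)$ has common coordinate sum $(p-1)|\rho|$, so the condition that $\lambda_{(0)} + p\xi - (p-1)\rho$ is a weight of $St_1$ forces $|\lambda_{(0)}| + p|\xi| = 2(p-1)|\rho|$. Combining these two equations yields $0 = 2(p-1)|\rho|$, contradicting $p \geq 3$ and the positivity $|\rho| = \binom{n}{2} > 0$ obtained from the partition convention $\rho = (n-1, n-2, \ldots, 0)$ for $GL_n$ with $n \geq 2$.

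The main obstacle will be the bookkeeping surrounding the Frobenius untwisting and the confirmation that the weight-sum reading of Proposition \ref{prop: degree1Bcoho}(c) is compatible with the paper's $\rho$-convention, so that $|\rho| \neq 0$. Once that is verified, the filtration together with the weight-sum contradiction yields $\Ext^1_{B/B_1}(L(p\tau) \otimes (-p^2\mu), N) = 0$, which is exactly the claimed vanishing of $E_2^{1,1}$.
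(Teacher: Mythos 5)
Your opening moves are fine and match the paper: filtering $N=\Ext^1_{B_1}(L(\lambda_{(0)}),k)$ by its one-dimensional $B/B_1$-weight subquotients $p\xi$, and untwisting to $\Ext^1_B(L(\tau),\xi+p\mu)$. The gap is the coordinate-sum contradiction you use to rule out $\tau\geq\xi+p\mu$. The paper takes $\rho$ to be half the sum of the positive roots, and Andersen's Proposition 3.2 (quoted as Proposition \ref{prop: degree1Bcoho}(c)) is stated in that same normalisation. For $GL_n$ every positive root $e_i-e_j$ has coordinate sum zero, hence $|\rho|=0$, not $\binom{n}{2}$. With $|\rho|=0$ your two derived equations, $|\xi|=-|\lambda_{(0)}|/p$ from the dominance and $|\lambda_{(0)}|+p|\xi|=2(p-1)|\rho|$ from the Steinberg condition, become the same equation, and the intended contradiction $0=2(p-1)|\rho|$ collapses to $0=0$. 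You cannot rescue this by switching to the partition convention $\rho=(n-1,\ldots,1,0)$: doing so would simultaneously shift the Steinberg module $L((p-1)\rho)$ and the weight appearing in Proposition \ref{prop: degree1Bcoho}(c) by powers of the determinant, and the coordinate-sum relation you extract is invariant under that shift, so it is vacuous under any consistent normalisation.

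Because the coordinate sums of $\tau$ and $\xi+p\mu$ actually agree, Proposition \ref{prop: degree1Bcoho}(a) alone does not dispose of $\Ext^1_B(L(\tau),\xi+p\mu)$, and the paper does not attempt that route. Instead it writes $\tau=\tau_{(0)}+p\gamma$ with $\tau_{(0)}\in X_1(T)$, applies the Lyndon--Hochschild--Serre spectral sequence \eqref{eq: LHSspectralsequence} a second time to obtain \eqref{eq:LHSinanalyszingE11}, and kills both $E_2^{1,0}$ (here a genuine size argument $\gamma\vdash c<d$ does work, one level deeper) and $E_2^{0,1}$ (via the delicate weight estimate of Lemma \ref{lem:weighofzetaissmall}, which forces $\chi=\gamma-\mu$ to be dominant and produces the contradiction). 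That second spectral-sequence layer is exactly what your proposal is missing; as written, the proof does not go through.
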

\begin{proof}
Actually we will show more, namely that 
\begin{equation}
\label{eq: vanishingeachweight}
\Ext^1_{B/B_1}\left(L(p\tau) \otimes (-p^2\mu), p\zeta\right)=0
 \end{equation}
for every weight $p\zeta$ in $\Ext^1_{B_1}(L(\lambda_{(0)},k)$.
So let $p\zeta$ be a weight in $\Ext^1_{B_1}(L(\lambda_{(0)}), k)$ and consider

\begin{equation}
\label{eq:termwemustshowiszeroinE11}
\Ext^1_{B/B_1}(L(p\tau) \otimes (-p^2\mu), p\zeta) \cong \Ext^1_B(L(\tau), \zeta+ p\mu)
\end{equation}
which we must prove vanishes. Set $\tau= \tau_{(0)} + p \gamma$ with $\tau_{(0)} \in X_1(T)$ and apply the spectral sequence \eqref{eq: LHSspectralsequence}:

\begin{equation}
\label{eq:LHSinanalyszingE11}
E_2^{i,j}=\Ext^i_{B/B_1}(L(p\gamma) \otimes (-p\mu), \Ext^j_{B_1}(L(\tau_{(0)}), \zeta) \Rightarrow \Ext^{i+j}_B(L(\tau), \zeta+p\mu).
\end{equation}

To prove \eqref{eq:termwemustshowiszeroinE11} is zero, it is sufficient to show both the $E_2^{1,0}$ and $E_2^{0,1}$ terms are zero in \eqref{eq:LHSinanalyszingE11} and apply Proposition \ref{prop: Spectralsequencesummary}(b).
 Recall that $\mu \vdash d$. The assumption that $\lambda_{(0)} \neq 0$ implies that $\tau \vdash m<pd$ and so $\gamma \vdash c<d$.
 Notice from Proposition \ref{prop: HeadofLlambdaasBandBrmodule} that $E_2^{1,0}$ is immediately zero unless $\tau_{(0)}=\zeta$ in which case it is $$\Ext^1_{B/B_1}(L(p\gamma), p\mu) \cong \Ext^1_B(L(\gamma), \mu).$$
   This is also zero since $\gamma \ngeq \mu$, since $\gamma$ is a partition of a smaller integer than $\mu$ is. Thus we have shown $E_2^{1,0}=0$ in \eqref{eq:LHSinanalyszingE11}.

Finally consider $\Ext^1_{B_1}(L(\tau_{(0)}), \zeta)$ in the $E_2^{0,1}$ term:
 $$E_2^{0,1}=\Hom_{B/B_1}\left (L(p\gamma) \otimes -p\mu, \Ext^1_{B_1}(L(\tau_{(0)}),\zeta) \right).$$
 of \eqref{eq:LHSinanalyszingE11}. If it is nonzero then  $p\chi:=p\gamma - p\mu$ is a weight of $\Ext^1_{B_1}(L(\tau_{(0)}, \zeta)$ where
\begin{equation}
\label{eq: chi+mu}
\gamma=\chi + \mu.
\end{equation}
Then
\begin{equation}
\tau_{(0)} -\zeta+p\chi-(p-1)\rho \in St_1
 \end{equation}
 by Proposition \ref{prop:vanishingext1B}(c). Thus for any $\alpha \in S$:
 $$-(p-1) \leq \langle \tau_{(0)} -\zeta+p\chi-(p-1)\rho, \alpha^\vee \rangle \leq p-1$$ which yields:
 $$0 \leq \langle \tau_{(0)}-\zeta, \alpha^\vee \rangle + p\langle \chi, \alpha^\vee \rangle \leq 2(p-1).$$ But $\tau_{(0)} \in X_1(T)$ so $0 \leq \langle \tau_{(0)}, \alpha^\vee \rangle \leq p-1$. We obtain:

 $$-(p-1) \leq -\langle \zeta, \alpha^\vee \rangle + p \langle \chi, \alpha^\vee \rangle \leq 2(p-1).$$ Dividing by $p$ and rearranging gives:

 \begin{equation}
 \label{eq: crazyweights}
 -1+\frac{1}{p}+\frac{1}{p}\langle \zeta, \alpha^\vee \rangle \leq \langle \chi, \alpha^\vee \rangle \leq 2 - \frac{2}{p} + \frac{1}{p} \langle \zeta, \alpha^\vee \rangle.
 \end{equation}
 Since we are assuming $p\zeta$ is a weight in $\Ext^1_{B_1}(L(\lambda_{(0)}), k)$, then $\lambda_{(0)} + p \zeta - (p-1)\rho \in St_1$ by \cite[Prop. 3.2]{AndersenExtensionsofmodulesforalgebraicgroups}. So by Lemma \ref{lem:weighofzetaissmall}, we have $0 \leq \langle \zeta, \alpha^\vee \rangle \leq 1.$ Plugging this into \eqref{eq: crazyweights} we get:
 $$-1+\frac{1}{p} \leq \langle \chi, \alpha^\vee \rangle \leq 2-\frac{1}{p}$$ but $\chi$ is integral so $$0 \leq \langle \chi, \alpha^\vee \rangle \leq 1$$ and $\chi$ is dominant. Since $\chi$ is dominant, $\mu \vdash d$ and $\lambda \vdash c <d$, we have a contradiction to \eqref{eq: chi+mu}. Thus $\Ext^1_{B_1}(L(\tau_{(0)}),\zeta)=0$ and the $E_2^{0,1}$ term vanishes as well.
\end{proof}

Lemmas \ref{lem: E20=0}, \ref{lem: E02=0} and \ref{lem: E11=0} let us apply Proposition \ref{prop: Spectralsequencesummary}(b) and obtain:

\begin{prop}
\label{prop:ext2withQ=0}

Let $Q$ be as in \eqref{eq: LESBcoho}. Then $\Ext^2_B(Q, p^2\mu)=0$.
\end{prop}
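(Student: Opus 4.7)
The plan is to deduce $\Ext^2_B(Q, p^2\mu)=0$ by working one composition factor at a time: for each simple $L(\lambda)$ occurring in $Q$, I will show $\Ext^2_B(L(\lambda), p^2\mu)=0$, then splice these vanishings together along a composition series.

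First I would record the key observation that makes the three preceding lemmas applicable to every composition factor of $Q$. Since $Q$ is a finite-dimensional quotient of $H^0(p^2d)$, any composition factor $L(\lambda)$ of $Q$ satisfies $\lambda \vdash p^2d$ and $[H^0(p^2d):L(\lambda)]\neq 0$. By Proposition~\ref{prop: H0twistedintoH0pd}, applied with $pd$ in place of $d$, no such $\lambda$ is of the form $p\tau$. These are exactly the hypotheses imposed on $\lambda$ in Proposition~\ref{prop:vanishingext1B} and in Lemmas~\ref{lem: E20=0}, \ref{lem: E02=0}, and \ref{lem: E11=0}.

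Next, for each such $L(\lambda)$, I would write $\lambda = \lambda_{(0)} + p\tau$ with $0 \neq \lambda_{(0)} \in X_1(T)$ and invoke the spectral sequence \eqref{eq:LHSB1inB} converging to $\Ext^{i+j}_B(L(\lambda), p^2\mu)$. Lemmas~\ref{lem: E20=0}, \ref{lem: E02=0}, and \ref{lem: E11=0} respectively establish the vanishing of the $E_2^{2,0}$, $E_2^{0,2}$, and $E_2^{1,1}$ terms, and Proposition~\ref{prop: Spectralsequencesummary}(b) then delivers $\Ext^2_B(L(\lambda), p^2\mu) = 0$.

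Finally I would promote this simple-factor vanishing to $Q$ via a standard induction on composition length. Pick a composition series $0 = Q_0 \subset Q_1 \subset \cdots \subset Q_n = Q$ and, for each $i$, exploit the long exact sequence in $\Ext_B^*(-, p^2\mu)$ coming from $0 \to Q_{i-1} \to Q_i \to Q_i/Q_{i-1} \to 0$: the quotient $Q_i/Q_{i-1}$ is a simple composition factor of $Q$, so its $\Ext^2$ vanishes by the previous paragraph; inductively the $\Ext^2$ of $Q_{i-1}$ vanishes as well, forcing $\Ext^2_B(Q_i, p^2\mu)=0$. The real difficulty of the proposition has already been absorbed into Lemma~\ref{lem: E11=0}, so the main obstacle for the proposition itself has been cleared; what remains at this stage is genuinely just this bookkeeping.
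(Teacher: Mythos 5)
Your proof is correct and matches the paper's own argument essentially step for step: the paper establishes the vanishing of $E_2^{2,0}$, $E_2^{1,1}$, $E_2^{0,2}$ in Lemmas~\ref{lem: E20=0}, \ref{lem: E11=0}, \ref{lem: E02=0} to conclude $\Ext^2_B(L(\lambda), p^2\mu)=0$ for each composition factor of $Q$, then passes to $Q$ itself (the dévissage step, which the paper leaves implicit but you make explicit, mirroring the logic of Lemma~\ref{lem:B1cohovanish}).
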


Applying Lemma \ref{lem:B1cohovanish} and Proposition \ref{prop:ext2withQ=0} to Equation \ref{eq: LESBcoho} gives:

\begin{lem}
\label{lemma: extonetwistagrees}
Let $\mu \vdash d$. Then:
\begin{equation}
\label{eq: isoextHmuonetwist}
\Ext^1_B(H^0(p^2d), p^2\mu) \cong \Ext^1_B(H^0(pd)^{(1)}, p^2\mu).
\end{equation}
\end{lem}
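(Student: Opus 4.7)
The plan is to exploit the long exact sequence \eqref{eq: LESBcoho} that the paper has already set up, and observe that the two flanking $\Ext$ groups vanish by results proved just above.

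Concretely, first I would recall that Proposition \ref{prop: H0twistedintoH0pd}, applied with $d$ replaced by $pd$, gives a short exact sequence of $G$-modules
\begin{equation*}
0 \rightarrow H^0(pd)^{(1)} \rightarrow H^0(p^2d) \rightarrow Q \rightarrow 0,
\end{equation*}
where $[Q : L(p\tau)] = 0$ for every $\tau \vdash pd$ (equivalently, no composition factor of $Q$ is of the form $L(p\tau)$ for $\tau \vdash pd$). Applying $\Hom_B(-, p^2\mu)$ produces the long exact sequence \eqref{eq: LESBcoho}, whose segment of interest is
\begin{equation*}
\Ext^1_B(Q, p^2\mu) \rightarrow \Ext^1_B(H^0(p^2d), p^2\mu) \rightarrow \Ext^1_B(H^0(pd)^{(1)}, p^2\mu) \rightarrow \Ext^2_B(Q, p^2\mu).
\end{equation*}

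The isomorphism in the lemma will follow once both outer terms are shown to vanish. But this is precisely the content of the two results just established: Lemma \ref{lem:B1cohovanish} gives $\Ext^1_B(Q, p^2\mu) = 0$, and Proposition \ref{prop:ext2withQ=0} gives $\Ext^2_B(Q, p^2\mu) = 0$.

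Hence no real obstacle remains; the lemma is a direct formal consequence of exactness of \eqref{eq: LESBcoho} together with these two vanishing statements, and its proof should consist of citing them and noting that the map in the middle of the displayed segment is therefore both injective and surjective.
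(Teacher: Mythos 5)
Your proof is correct and is exactly the argument the paper uses: it extracts the long exact sequence \eqref{eq: LESBcoho} from the short exact sequence of Proposition \ref{prop: H0twistedintoH0pd} (with $d$ replaced by $pd$), then invokes Lemma \ref{lem:B1cohovanish} and Proposition \ref{prop:ext2withQ=0} to kill the two flanking terms. Nothing further is needed.
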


We can now obtain the main result of this section:

\begin{thm}
\label{thm: extinjectionontwistedSpecht} Let $\lambda \vdash d$. Then there is a isomorphism:
$$ \HH^1(\Sigma_{pd}, S^{p\lambda}) \cong \HH^1(\Sigma_{p^2d}, S^{p^2\lambda}) .$$

\end{thm}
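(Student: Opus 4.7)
The plan is to assemble the theorem from the infrastructure already built in this section: the two key ingredients are Lemma \ref{lem: Bcohooftwsit agrees}, which says that twisting by Frobenius does not change $\Ext^1_B(H^0(d)^{(1)}, p\mu)$ compared with $\Ext^1_B(H^0(d), \mu)$, and Lemma \ref{lemma: extonetwistagrees}, which says that replacing the symmetric power $H^0(p^2d)$ by the twisted symmetric power $H^0(pd)^{(1)}$ does not change $\Ext^1_B(-, p^2\mu)$. Combining these two lemmas yields a bridge between $\Ext^1_B(H^0(p^2d), p^2\lambda)$ and $\Ext^1_B(H^0(pd), p\lambda)$, and then Proposition \ref{prop: frobenius reciprocityforH0lambda}(b) translates each side to symmetric group cohomology.

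Concretely, I would proceed as follows. First, observe that since $p \geq 3$ we have $1 \leq 2p-4$, so Proposition \ref{prop: frobenius reciprocityforH0lambda}(b) applies in degree one, giving
\[
\HH^1(\Sigma_{pd}, S^{p\lambda}) \cong \Ext^1_B(H^0(pd), p\lambda) \quad\text{and}\quad \HH^1(\Sigma_{p^2d}, S^{p^2\lambda}) \cong \Ext^1_B(H^0(p^2d), p^2\lambda),
\]
interpreted in $GL_n(k)$ for any $n \geq p^2 d$. Next, take $\mu = \lambda$ in Lemma \ref{lemma: extonetwistagrees} to obtain
\[
\Ext^1_B(H^0(p^2d), p^2\lambda) \cong \Ext^1_B(H^0(pd)^{(1)}, p^2\lambda).
\]
Finally, apply Lemma \ref{lem: Bcohooftwsit agrees} with $d$ replaced by $pd$ and $\mu$ replaced by $p\lambda$ (noting $p\lambda \vdash pd$) to get
\[
\Ext^1_B(H^0(pd)^{(1)}, p^2\lambda) \cong \Ext^1_B(H^0(pd), p\lambda).
\]
Chaining these three isomorphisms produces the asserted equality of $\HH^1$ groups.

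There is essentially no obstacle left to overcome: all the hard work is in Lemmas \ref{lem: E20=0}, \ref{lem: E02=0}, and \ref{lem: E11=0}, which feed into Proposition \ref{prop:ext2withQ=0} and hence into Lemma \ref{lemma: extonetwistagrees}. The only thing to double-check is a bookkeeping point, namely that Lemma \ref{lemma: extonetwistagrees}, stated for $\mu \vdash d$ with target $p^2\mu$, may legitimately be applied with $d$ taken to be $d$ (and $\mu = \lambda$), and that Lemma \ref{lem: Bcohooftwsit agrees}, stated for $\mu \vdash d$ with target $p\mu$, may be applied with $d$ replaced by $pd$ and $\mu$ replaced by $p\lambda \vdash pd$; both substitutions are direct and require no new argument. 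Thus the theorem follows by composing these three isomorphisms.
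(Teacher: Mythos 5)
Your proof is correct and follows essentially the same route as the paper: convert both sides to $\Ext^1_B$ via Proposition \ref{prop: frobenius reciprocityforH0lambda}(b), apply Lemma \ref{lemma: extonetwistagrees} to replace $H^0(p^2d)$ by $H^0(pd)^{(1)}$, then undo one Frobenius twist via Lemma \ref{lem: Bcohooftwsit agrees} (applied to $pd$ and $p\lambda$). The only cosmetic difference is that you fix $n\geq p^2d$ at the outset rather than invoking Propositions \ref{prop: extagrees  in S or G category} and \ref{prop: ext agrees GLnGLd} at the last step to pass from $GL_{p^2d}(k)$ to $GL_{pd}(k)$, which is a perfectly valid reorganization of the same bookkeeping.
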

\begin{proof}
We have
\begin{eqnarray*}
  \HH^1(\Sigma_{p^2d}, S^{p^2\lambda})  &\cong & \Ext^1_{B}(H^0(p^2d), H^0(p^2\lambda)) \text{ by Proposition } \ref{prop: KNresultequatescohSpectwithGLd}(b)\\
   &\cong& \Ext^1_B(H^0(pd)^{(1)}, p^2\mu)  \text{ by } \eqref{eq: isoextHmuonetwist}\\
    & \cong & \Ext^1_B(H^0(pd), p\mu) \text{ by Lemma } \ref{lem: Bcohooftwsit agrees}\\
    & \cong &  \HH^1(\Sigma_{pd}, S^{p\lambda}).\\
\end{eqnarray*}
 where the last isomorphism is using Propositions \ref{prop: extagrees  in S or G category} and \ref{prop: ext agrees GLnGLd} to get from the group $GL_{p^2d}(k)$ to $GL_{pd}(k)$

\end{proof}

We immediately obtain a ``generic cohomology" result.

\begin{thm}
\label{thm: genericcohoSpechtdegree1}
Let $\lambda \vdash d$. Then for any $c \geq 1$ we have
$$\HH^1(\Sigma_{p^cd}, S^{p^c\lambda}) \cong \HH^1(\Sigma_{p^{c+1}d},S^{p^{c+1}\lambda}) .$$
\end{thm}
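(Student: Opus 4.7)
The plan is to deduce this from the preceding Theorem \ref{thm: extinjectionontwistedSpecht} by a direct rescaling, with no new spectral sequence work required. The point is that Theorem \ref{thm: extinjectionontwistedSpecht} is stated for an \emph{arbitrary} partition $\lambda \vdash d$, and the statement relates the cohomology of the Specht modules indexed by $p\lambda$ and $p^2\lambda$. To obtain the isomorphism relating $p^c\lambda$ and $p^{c+1}\lambda$, I would simply set $\mu := p^{c-1}\lambda$ and $e := p^{c-1}d$, and observe that $\mu$ is still a partition of $e$.

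With this relabeling, Theorem \ref{thm: extinjectionontwistedSpecht} applied to $\mu \vdash e$ gives
\begin{equation*}
\HH^1(\Sigma_{pe}, S^{p\mu}) \;\cong\; \HH^1(\Sigma_{p^2 e}, S^{p^2 \mu}).
\end{equation*}
Unwinding the definitions, $pe = p^c d$, $p\mu = p^c \lambda$, $p^2 e = p^{c+1} d$, and $p^2 \mu = p^{c+1}\lambda$, so this is exactly the desired statement. No induction on $c$ is even needed, since a single application of the previous theorem (with the right choice of base partition) suffices.

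The only thing to check is that nothing in the hypothesis of Theorem \ref{thm: extinjectionontwistedSpecht} restricts the shape of the input partition, and inspection of its statement shows that it does not: the partition $\mu = p^{c-1}\lambda$ is allowed. There is no real obstacle here; the technical content of the generic cohomology phenomenon is already contained in Theorem \ref{thm: extinjectionontwistedSpecht}, and this corollary is merely an observation that its conclusion propagates along the tower of Frobenius-twisted partitions $\lambda, p\lambda, p^2\lambda, \ldots$.
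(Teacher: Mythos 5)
Your proof is correct and is essentially the paper's argument: the paper derives Theorem \ref{thm: genericcohoSpechtdegree1} as an immediate consequence of Theorem \ref{thm: extinjectionontwistedSpecht}, and the substitution $\mu = p^{c-1}\lambda \vdash p^{c-1}d$ is exactly the intended reading of ``immediately obtain.''
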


\begin{rem} A corresponding  result for $\HH^0(\Sigma_d, S^\lambda)$ follows from Theorem \ref{thm:JamestheoremonHom} since $\HH^0(\Sigma_{pd}, S^{p\lambda}) =0$ unless $\lambda=(d)$. For $\HH^1$ the generic cohomology can definitely be nonzero, see Example \ref{exm: papb} for instance.
\end{rem}

\section{A second stability result}
\label{section: stabilityresultaddinglargepower of p}
\subsection{}In this section we prove a stability result for Specht module cohomology involving  adding a large power of $p$ to the first part of the partition. This will generalize Corollary \ref{cor:2partmotivation}(b) for two-part partitions. The previous section used the fact that $H^0(d)^{(1)}$ sits nicely as a submodule in $H^0(pd)$. In this section we exploit the fact that $H^0(d) \otimes L(1)^{(r)}$ sits nicely inside $H^0(d+p^r)$ for large $r$.
For this section choose $r$ so that $p^r>d$ and let $n=d+p^r$. We wish to analyze the $GL_n(k)$ module $H^0(d) \otimes L(p^r)= H^0(d) \otimes L(1)^{(r)}$.

\begin{lem}
\label{lem: H0(d)timesLpahassimplesocle}
The module $H^0(d) \otimes L(1)^{(r)}$ has simple socle isomorphic to $L(d+p^r)$.
\end{lem}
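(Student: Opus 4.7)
The plan is to pin down the socle of $H^0(d) \otimes L(1)^{(r)}$ as $L(d+p^r)$ by first exhibiting an explicit embedding $L(d+p^r) \hookrightarrow H^0(d) \otimes L(1)^{(r)}$, and then showing no other simple type can appear in the socle.

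For the embedding, since $d < p^r$ the weight $(d, 0, \ldots, 0)$ is $p^r$-restricted, so Steinberg's tensor product theorem gives $L(d+p^r) \cong L(d) \otimes L(1)^{(r)}$. The well-known simple socle inclusion $L(d) \hookrightarrow H^0(d)$ (cf.\ \cite[II.2.3]{jantzenbook2nded}), tensored with $L(1)^{(r)}$, yields the desired embedding.

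For the reverse bound, I would apply the tensor identity to rewrite
$$H^0(d) \otimes L(1)^{(r)} \cong \operatorname{ind}_B^G\bigl((d,0,\ldots,0) \otimes L(1)^{(r)}|_B\bigr),$$
and then use Frobenius reciprocity (Proposition \ref{prop: frobenius reciprocityforH0lambda}(a)) to identify
$$\Hom_G(L(\mu), H^0(d) \otimes L(1)^{(r)}) \cong \Hom_B\bigl(L(\mu), (d,0,\ldots,0) \otimes L(1)^{(r)}|_B\bigr).$$
As a $B$-module, $L(1)^{(r)}|_B$ admits a composition series whose one-dimensional quotients carry the $n$ weights $(p^r,0,\ldots,0), (0,p^r,0,\ldots,0), \ldots, (0,\ldots,0,p^r)$; tensoring with $(d,0,\ldots,0)$ shifts these to the $n$ weights obtained by placing $p^r$ into position $i$ of $(d,0,\ldots,0)$ for $i=1,\ldots,n$. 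A standard long exact sequence argument using Proposition \ref{prop: HeadofLlambdaasBandBrmodule}(a) then bounds $\dim \Hom_B(L(\mu), -)$ above by the number of composition factors whose weight equals $\mu$.

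For $\mu$ dominant, the shifted weight is dominant only when the $p^r$ lies in position $1$: otherwise the first coordinate is $d$ while position $i \geq 2$ holds $p^r > d$, violating dominance. Hence the only possibility is $\mu = (d+p^r, 0, \ldots, 0)$, and this factor occurs exactly once, giving $\dim \Hom_G(L(\mu), H^0(d) \otimes L(1)^{(r)}) \leq 1$ with equality possible only at $\mu = (d+p^r)$. Combined with the embedding, this pins down the socle as simple and isomorphic to $L(d+p^r)$. The only conceptual step is choosing the right framework; once the tensor identity and Frobenius reciprocity are deployed, the hypothesis $p^r > d$ carries the rest by eliminating all non-dominant contributions.
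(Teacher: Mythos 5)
Your proof is correct, but it takes a genuinely different route from the paper's. You absorb $L(1)^{(r)}$ into the induction via the tensor identity and then use Frobenius reciprocity to reduce the socle computation to $\Hom_B\bigl(L(\mu),\,(d,0,\ldots,0)\otimes L(1)^{(r)}|_B\bigr)$, where $L(1)^{(r)}|_B$ is uniserial with weights $p^r\epsilon_1,\ldots,p^r\epsilon_n$; the hypothesis $p^r>d$ then makes only the head weight $(d+p^r,0,\ldots,0)$ dominant, so Proposition \ref{prop: HeadofLlambdaasBandBrmodule}(a) together with a filtration bound forces $\mu=(d+p^r)$ with multiplicity one, and the Steinberg tensor product embedding $L(d+p^r)=L(d)\otimes L(1)^{(r)}\hookrightarrow H^0(d)\otimes L(1)^{(r)}$ supplies the matching lower bound. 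The paper instead runs the Lyndon--Hochschild--Serre spectral sequence \eqref{eq: LHSspectralsequence} for $G_r\lhd G$, obtaining $\Hom_G(L(\lambda)\otimes L(1)^{(r)}, H^0(d)\otimes L(1)^{(r)})\cong\Hom_{G/G_r}\bigl(L(1)^{(r)},\Hom_{G_r}(L(\lambda),H^0(d)\otimes L(1)^{(r)})\bigr)$, and uses $d<p^r$ to ensure that the constituents of $H^0(d)$ are $p^r$-restricted, so $\Hom_{G_r}(L(\lambda),H^0(d))=\Hom_G(L(\lambda),H^0(d))$, nonzero only for $\lambda=(d)$. Your route avoids the spectral sequence entirely and is arguably more elementary; both rest on $p^r>d$ but exploit it differently --- you through dominance of the shifted weights, the paper through $p^r$-restrictedness of constituents. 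One small imprecision: you cite Proposition \ref{prop: frobenius reciprocityforH0lambda}(a), which is stated only for inducing a single weight $\lambda$, whereas you need the general adjunction $\Hom_G(M,\operatorname{ind}_B^G N)\cong\Hom_B(M,N)$ with $N=(d,0,\ldots,0)\otimes L(1)^{(r)}|_B$; this is still standard Frobenius reciprocity (cf.\ \cite[I.3.4]{jantzenbook2nded}) and the argument goes through.
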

\begin{proof}
Its constituents are all of the form $L(\lambda) \otimes L(1)^{(r)}$ where $[H^0(d) : L(\lambda)]=1$. From \eqref{eq: LHSspectralsequence} and Proposition \ref{prop: Spectralsequencesummary}(c), we have:

\begin{equation}
\label{eq: homisoforstab1}
\Hom_G(L(\lambda) \otimes L(1)^{(r)}, H^0(d) \otimes L(1)^{(r)}) \cong \Hom_{G/G_r}\left(L(1)^{(r)}, \Hom_{G_r}(L(\lambda), H^0(d) \otimes L(1)^{(r)})\right).
\end{equation}

Since $d<p^r$, constituents of $H^0(d)$ are of the form $L(\mu), \mu \in X_r(T)$. So

$$\Hom_{G_r}(L(\lambda), H^0(d)) \cong \Hom_G(L(\lambda), H^0(d))$$ which is zero unless $\lambda = (d)$. Thus the homomorphism space in \eqref{eq: homisoforstab1} is nonzero precisely when $\lambda=(d)$ and the statement about the socle follows.

\end{proof}

\begin{lem}
\label{lem:H0timespainsideH0}
There is an injection:
$$0 \rightarrow H^0(d) \otimes L(1)^{(r)} \rightarrow H^0(d+p^r).$$
\end{lem}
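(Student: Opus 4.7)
The plan is to construct an explicit $G$-module map and promote it to an injection using the simple-socle statement of Lemma~\ref{lem: H0(d)timesLpahassimplesocle}. Identify $H^0(m)$ with the $m$-th symmetric power $S^m(V)$ of the natural module, and fix a weight basis $v_1,\dots,v_n$ of $V$. Multiplication in the symmetric algebra gives a $G$-equivariant surjection $H^0(d)\otimes H^0(p^r)\to H^0(d+p^r)$. In characteristic $p$, the $p^r$-th power map $v_i\mapsto v_i^{p^r}$ realizes $L(1)^{(r)}$ as a $G$-submodule of $S^{p^r}(V)=H^0(p^r)$, since $(gv)^{p^r}=g^{[p^r]}v^{p^r}$ for $v\in V$ and $L(1)^{(r)}$ is simple. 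Composing the inclusion $L(1)^{(r)}\hookrightarrow H^0(p^r)$ with the multiplication map yields a $G$-module map
\[\varphi\colon H^0(d)\otimes L(1)^{(r)}\longrightarrow H^0(d+p^r),\]
which is nonzero because $\varphi(v_1^d\otimes v_1^{(r)})=v_1^{d+p^r}$.

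To upgrade $\varphi$ to an injection, I invoke Lemma~\ref{lem: H0(d)timesLpahassimplesocle}: the source has simple socle $L(d+p^r)$. Since $d<p^r$, every composition factor $L(\lambda)$ of $H^0(d)$ has its $p$-adic expansion supported only at levels $0,\dots,r-1$, so Steinberg's tensor product theorem gives $L(\lambda)\otimes L(1)^{(r)}\cong L(\lambda+p^r(1,0,\dots,0))$, irreducibly. Thus $L(d+p^r)$ appears in $H^0(d)\otimes L(1)^{(r)}$ with composition multiplicity $[H^0(d):L(d)]=1$. If $\ker\varphi$ were nonzero, the simple-socle property would force $\ker\varphi\supset L(d+p^r)$, exhausting the unique copy; then the image of $\varphi$ could not have $L(d+p^r)$ as a composition factor. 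But the image is a nonzero submodule of $H^0(d+p^r)$, whose socle is $L(d+p^r)$, so it must contain $L(d+p^r)$. This contradiction forces $\ker\varphi=0$.

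The only point requiring care is the multiplicity-one count for $L(d+p^r)$ in the source, which uses the hypothesis $d<p^r$ to invoke Steinberg's theorem cleanly. I do not anticipate any more serious obstacle in this argument.
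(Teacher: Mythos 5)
Your proof is correct, but it takes a genuinely different route from the paper's. The paper dualizes the claim to a surjection $V(d+p^r) \twoheadrightarrow V(d)\otimes L(1)^{(r)}$, produces a nonzero map abstractly via Frobenius reciprocity for $B^+$ (Jantzen I.2.13, giving a one-dimensional Hom space), and then concludes surjectivity from the facts that both modules are multiplicity free and have isomorphic simple heads $L(d+p^r)$ --- the simple-head claim being exactly the contravariant dual of Lemma \ref{lem: H0(d)timesLpahassimplesocle}. You instead work covariantly and exhibit the map explicitly: restriction of the symmetric-algebra multiplication $S^d(V)\otimes S^{p^r}(V)\to S^{d+p^r}(V)$ to the copy of $L(1)^{(r)}$ spanned by $p^r$-th powers (which really is a $G$-submodule, since $g(v_i^{p^r})=\sum_j g_{ji}^{p^r}v_j^{p^r}$). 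Your injectivity argument then combines the simple-socle statement of Lemma \ref{lem: H0(d)timesLpahassimplesocle} directly with the multiplicity-one count via Steinberg. Both proofs rest on the same ingredients (Steinberg's tensor product theorem, multiplicity-freeness of $H^0(d)$, and the simple socle/head of $H^0(d)\otimes L(1)^{(r)}$), but yours buys concreteness --- you can write down the injection on weight vectors --- at the cost of a slightly longer argument; the paper's version is terser because Frobenius reciprocity packages both existence and uniqueness of the map in one line, and it never names the map. One small remark: your appeal to Lemma \ref{lem: H0(d)timesLpahassimplesocle} is arguably truer to the paper's organization, since that lemma is proved immediately beforehand and the paper's own proof only uses its dual implicitly.
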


\begin{proof}
Equivalently that  there is a surjection from $V(d+p^r) $ onto $V(d) \otimes L(1)^{(r)}.$ Note that \cite[Lemma I.2.13]{jantzenbook2nded} implies

$$\Hom_G(V(d+p^r), V(d) \otimes L(1)^{(r)}) \cong \Hom_{B^+}(d+p^r, V(d) \otimes L(1)^{(r)})$$ which is one-dimensional. However both modules have isomorphic simple heads and are multiplicity free, so the map must be onto.

\end{proof}

For notational simplicity, if $\lambda=(\lambda_1, \lambda_2, \ldots, \lambda_n) \vdash d$ let $\lambda+p^r$ denote $(\lambda_1+p^r, \lambda_2, \ldots, \lambda_n)$. Next we consider the cokernel of the map in Lemma \ref{lem:H0timespainsideH0}.

\begin{lem}
    \label{lem: injectionH0withuntwistedcokerneL}
There is a short exact sequence
    \begin{equation}
        \label{eq: SESH0intoH0+pr}
            0 \rightarrow H^0(d)\otimes L(1)^{(r)} \rightarrow H^0(d+p^r) \rightarrow U \rightarrow 0
    \end{equation}
where if $[U: L(\mu)] \neq 0$ then $\mu_1<p^r$. In particular $\mu \not\unrhd \lambda+p^r$.
\end{lem}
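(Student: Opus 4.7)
The injection is immediate from Lemma \ref{lem:H0timespainsideH0}, so $U$ is defined by the displayed short exact sequence. The work is to analyze the composition factors of $U$, and my plan is to compare the composition factors of $H^0(d)\otimes L(1)^{(r)}$ with those of $H^0(d+p^r)$ using Doty's classification (Lemma \ref{lem: tells youcompositionfactorsofsympowersinpadicexpansion}).

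First I would observe that both modules in the short exact sequence are multiplicity free. For $H^0(d+p^r)$ this is Theorem \ref{thm:Dotystructureofsymmetricpowers}. For $H^0(d)\otimes L(1)^{(r)}$, since $d<p^r$ every composition factor $L(\lambda)$ of $H^0(d)$ has $\lambda \in X_r(T)$, so the Steinberg tensor product theorem identifies the composition factors of $H^0(d)\otimes L(1)^{(r)}$ with the modules $L(\lambda+p^r\epsilon_1)$, where $\epsilon_1=(1,0,\ldots,0)$ and $L(\lambda)$ ranges over the (multiplicity one) composition factors of $H^0(d)$.

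Next I would prove that these are exactly the composition factors $L(\mu)$ of $H^0(d+p^r)$ for which $\mu_1\geq p^r$. Given such a $\mu\vdash d+p^r$, since $d<p^r$ we have $p^r\leq \mu_1\leq d+p^r<2p^r$, so the $p$-adic expansion of $\mu_1$ has digit $1$ in position $r$ and digits $0$ in positions $>r$. Applying Lemma \ref{lem: tells youcompositionfactorsofsympowersinpadicexpansion}(a) with $j=r$ (the digit $1\neq p-1$ since $p\geq 3$) forces $\mu_l<p^r$ for every $l\geq 2$; similarly the zero digits in positions $>r$ force the higher digits of $\mu_l$ to vanish. Writing $\lambda:=\mu-p^r\epsilon_1$, the $p$-adic expansion of $\lambda$ agrees with that of $\mu$ except that the digit in position $r$ of the first row is now $0$, and one can check that the carry condition in Lemma \ref{lem: tells youcompositionfactorsofsympowersinpadicexpansion}(a) for $\mu$ translates exactly to the carry condition for $\lambda\vdash d$. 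This gives a bijection between $\{L(\mu)\subseteq H^0(d+p^r)\mid\mu_1\geq p^r\}$ and the composition factors of $H^0(d)$, matched up with the composition factors of $H^0(d)\otimes L(1)^{(r)}$ via $\lambda\leftrightarrow \lambda+p^r\epsilon_1$.

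Since both sides are multiplicity free, equating Jordan--H\"older multiplicities in \eqref{eq: SESH0intoH0+pr} forces every composition factor $L(\mu)$ of $U$ to satisfy $\mu_1<p^r$. The final assertion $\mu\not\unrhd \lambda+p^r$ is then immediate: dominance requires $\mu_1\geq \lambda_1+p^r\geq p^r$, contradicting $\mu_1<p^r$. The only step I expect to require any care is the translation of Doty's carry condition from $\mu$ to $\lambda=\mu-p^r\epsilon_1$; once that bookkeeping is in place the argument is straightforward.
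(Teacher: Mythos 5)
Your proof is correct and follows essentially the same route as the paper: both identify the composition factors of $H^0(d)\otimes L(1)^{(r)}$ via the Steinberg tensor product theorem as $L(\lambda+p^r\epsilon_1)$ for $L(\lambda)$ in $H^0(d)$, show via Doty's digit/carry-pattern criterion that these are exactly the factors $L(\mu)$ of $H^0(d+p^r)$ with $\mu_1\geq p^r$, and then invoke multiplicity-freeness of $H^0(d+p^r)$ to conclude. The only point you leave implicit that the paper makes explicit is that $\mu-p^r\epsilon_1$ is actually a partition (i.e.\ $\mu_1-p^r\geq\mu_2$), though this does follow from the digit comparison you set up.
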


\begin{proof}
The sequence comes from Lemma \ref{lem:H0timespainsideH0}. Now suppose $[H^0(d+p^r) : L(\mu)] \neq 0$ and $\mu_1 \geq p^r$. Then the $p$-adic expansion of $\mu_1$ is $\mu_1=p^r+c_{r-1}p^{r-1} + \cdots + c_0$. Since $d<p^r$ then $\mu \vdash d+p^r<2p^r$ so  $\mu_2<p^r$. However $\mu$ is supposed to be maximal with its carry pattern, since $L(\mu)$ is a composition factor of $H^0(d+p^r)$. This can not happen unless $\mu_1 - \mu_2 \geq p^r$. Thus $\mu$ is of the form $\tilde{\mu}+p^r$. Clearly $\tilde{\mu}$ is also maximal among partitions of $d$ with its carry pattern, so $[H^0(d) : L(\tilde{\mu})] \neq 0$. Thus $L(\mu)$ occurs in $H^0(d)\otimes L(1)^{(r)} $ and, since $H^0(d+p^r)$ is multiplicity free, not in $U$.

\end{proof}
From \cite[II.4.14]{jantzenbook2nded} we immediately obtain:

\begin{lem}
\label{lem: extvanish}
Let $U$ be as in \eqref{eq: SESH0intoH0+pr}. Then $\Ext^i_G(U, H^0(\lambda+p^r))=0$ for all $i$.
\end{lem}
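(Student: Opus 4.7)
The plan is to invoke Jantzen's standard vanishing principle \cite[II.4.14]{jantzenbook2nded} directly, in exactly the manner it was already used inside the proof of Proposition \ref{prop: H0twistedintoH0pd}. That result gives $\Ext^i_G(M, H^0(\nu)) = 0$ for every $i \geq 0$ as soon as the dominant weight $\nu$ is not dominated (in the root/dominance order) by any weight occurring in $M$.

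I would apply this with $\nu = \lambda + p^r$ and $M = U$, and reduce the hypothesis to what Lemma \ref{lem: injectionH0withuntwistedcokerneL} has already supplied. That lemma asserts that each composition factor $L(\mu)$ of $U$ has $\mu_1 < p^r$, and in particular $\mu \not\unrhd \lambda + p^r$, since $(\lambda + p^r)_1 = \lambda_1 + p^r \geq p^r > \mu_1$. This extends immediately from composition-factor highest weights to arbitrary weights of $U$: any weight $\sigma$ of $U$ occurs in some composition factor $L(\mu)$ and so satisfies $\sigma \leq \mu$ in the root order; because the positive roots have the form $\epsilon_i - \epsilon_j$ with $i < j$, this forces $\sigma_1 \leq \mu_1 < p^r$. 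But $\sigma \unrhd \lambda + p^r$ would demand $\sigma_1 \geq \lambda_1 + p^r \geq p^r$, a contradiction. Hence no weight of $U$ dominates $\lambda + p^r$.

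With the hypothesis of \cite[II.4.14]{jantzenbook2nded} verified, one immediately gets $\Ext^i_G(U, H^0(\lambda+p^r)) = 0$ in every degree. There is no real obstacle: the entire combinatorial content was absorbed into Lemma \ref{lem: injectionH0withuntwistedcokerneL}, where the composition factors of $U$ were pinned down via Doty's description of the symmetric powers, and all that remains is to observe that the first-coordinate constraint $\mu_1 < p^r$ propagates through the positive-root order. This is why the paper presents the statement as an immediate consequence of Jantzen's reference.
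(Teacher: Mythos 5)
Your argument is correct and is essentially what the paper does: Lemma~\ref{lem: injectionH0withuntwistedcokerneL} supplies that no composition factor $L(\mu)$ of $U$ has $\mu \unrhd \lambda + p^r$, and then \cite[II.4.14]{jantzenbook2nded} gives the vanishing in all degrees; the paper states this as immediate without spelling out the details. Your extra paragraph propagating the first-coordinate bound $\mu_1 < p^r$ from composition-factor highest weights to arbitrary weights of $U$ via the positive-root order is a sound filling-in of what the paper leaves implicit, and the verification that $\sigma \unrhd \lambda + p^r$ would force $\sigma_1 \geq p^r$ is exactly the right contradiction.
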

Applying $\Hom_G(-, H^0(\lambda+p^r))$ to \eqref{eq: SESH0intoH0+pr} and using the Lemma \ref{lem: extvanish}, we obtain:

\begin{lem}
    \label{lem: extagreeH0}

    $$\Ext^1_G(H^0(d+p^r), H^0(\lambda+p^r)) \cong \Ext^1_G(H^0(d) \otimes L(1)^{(r)}, H^0(\lambda+p^r)).$$
    \end{lem}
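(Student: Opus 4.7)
The plan is to exploit the short exact sequence \eqref{eq: SESH0intoH0+pr} by applying the contravariant functor $\Hom_G(-, H^0(\lambda+p^r))$. This yields a long exact sequence of $\Ext$-groups
\begin{equation*}
\cdots \to \Ext^i_G(U, H^0(\lambda+p^r)) \to \Ext^i_G(H^0(d+p^r), H^0(\lambda+p^r)) \to \Ext^i_G(H^0(d)\otimes L(1)^{(r)}, H^0(\lambda+p^r)) \to \Ext^{i+1}_G(U, H^0(\lambda+p^r)) \to \cdots
\end{equation*}
and the idea is to extract the portion with $i=1$ and squeeze out the desired isomorphism by making the neighbouring $U$-terms vanish.

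The key step is Lemma \ref{lem: extvanish}, which tells us that $\Ext^i_G(U, H^0(\lambda+p^r)) = 0$ for every $i \geq 0$. Indeed, by Lemma \ref{lem: injectionH0withuntwistedcokerneL}, every composition factor $L(\mu)$ of $U$ satisfies $\mu_1 < p^r$, so $\mu \not\unrhd \lambda+p^r$, and then \cite[II.4.14]{jantzenbook2nded} forces the $\Ext$-groups against $H^0(\lambda+p^r)$ to vanish. With both $\Ext^1_G(U, H^0(\lambda+p^r))$ and $\Ext^2_G(U, H^0(\lambda+p^r))$ zero, the four-term piece
\begin{equation*}
0 \to \Ext^1_G(H^0(d+p^r), H^0(\lambda+p^r)) \to \Ext^1_G(H^0(d)\otimes L(1)^{(r)}, H^0(\lambda+p^r)) \to 0
\end{equation*}
is exact and delivers the desired isomorphism.

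There is no real obstacle here; the work was already done in establishing the short exact sequence \eqref{eq: SESH0intoH0+pr} and the dominance analysis of the cokernel $U$. This lemma is essentially a bookkeeping consequence of those structural results, and no spectral sequence machinery is needed for this step.
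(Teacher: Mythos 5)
Your proof is correct and matches the paper's approach exactly: the paper derives Lemma~\ref{lem: extagreeH0} by applying $\Hom_G(-, H^0(\lambda+p^r))$ to the short exact sequence~\eqref{eq: SESH0intoH0+pr} and invoking the vanishing in Lemma~\ref{lem: extvanish}. Nothing is missing or different.
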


By Lemma \ref{lem: extagreeH0} and Proposition \ref{prop: frobenius reciprocityforH0lambda}, we know that $$\HH^1(\Sigma_{d+p^r}, S^{\lambda+p^r}) \cong \Ext^1_{B}(H^0(\lambda)\otimes L(1)^{(r)}, \lambda+p^r).$$ The module $L(1)$ is just the natural representation $V$, i.e. column vectors with the natural $G$ action. Restricted to $B$, the lower triangular matrices, this module is clearly uniserial with simple socle $(0,0, \ldots, 0,1)$ and simple head $(1,0,\ldots, 0)$. As a module for $B$, $L(1)^{(r)}$ is still uniserial and we can write:

    \begin{equation}
        \label{eq: VtwistedresolutionasBmodule}
        0 \rightarrow Q \rightarrow L(1)^{(r)} \rightarrow (p^r, 0, \ldots, 0) \rightarrow 0.
    \end{equation}

\begin{lem}
    \label{lem:Bextvan}
Let $Q$ be as in \eqref{eq: VtwistedresolutionasBmodule}. Then
$$\Ext^1_B(H^0(d) \otimes Q, \lambda+p^r)=0.$$
\end{lem}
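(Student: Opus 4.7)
The plan is to reduce the vanishing, via two nested filtrations, to elementary $\Ext^0$ and $\Ext^1$ vanishings between simple modules and one-dimensional weight modules; these follow from Propositions \ref{prop: HeadofLlambdaasBandBrmodule}(a) and \ref{prop: degree1Bcoho}(a), respectively.

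First, as a $B$-module $L(1)$ is uniserial, because any $B$-submodule of the natural module must be spanned by $e_j, e_{j+1}, \ldots, e_n$ for some $j$. Thus $L(1)^{(r)}$ is uniserial with one-dimensional subquotients of weights $p^r e_n, p^r e_{n-1}, \ldots, p^r e_1$ from bottom to top, and $Q$ inherits a $B$-filtration whose successive quotients are the weight modules $p^r e_j$ for $j = 2, \ldots, n$. Tensor this filtration with $H^0(d)$ (exact, since tensoring is exact), apply $\Hom_B(-, \lambda+p^r)$, and propagate through the resulting long exact sequences. It therefore suffices to show, for each $j \in \{2,\ldots,n\}$, that both $\Hom_B(H^0(d) \otimes p^r e_j,\lambda+p^r)$ and $\Ext^1_B(H^0(d) \otimes p^r e_j,\lambda+p^r)$ vanish. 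Since each $p^r e_j$ is one-dimensional, tensor-hom adjunction (valid for $\Ext^1$ because tensoring with a finite-dimensional module preserves exactness) converts these into
\[
\Hom_B(H^0(d),\mu_j) = 0 \qquad \text{and} \qquad \Ext^1_B(H^0(d),\mu_j) = 0,
\]
where $\mu_j := \lambda + p^r - p^r e_j$.

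Second, fix a $B$-composition series of $H^0(d)$ with simple subquotients $L(\tau)$, $\tau \vdash d$, and repeat the long exact sequence argument to reduce further to the statements $\Hom_B(L(\tau),\mu_j) = 0$ and $\Ext^1_B(L(\tau),\mu_j) = 0$ for every such $\tau$ and every $j \geq 2$. The $\Hom$ vanishing follows from Proposition \ref{prop: HeadofLlambdaasBandBrmodule}(a): it would require $\tau = \mu_j$, but the hypothesis $p^r > d \geq \lambda_j$ forces $(\mu_j)_j = \lambda_j - p^r < 0$, so $\mu_j$ is not a partition. For the $\Ext^1$ vanishing, Proposition \ref{prop: degree1Bcoho}(a) gives the conclusion as long as $\tau \not\geq \mu_j$ in the root order. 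But the first coordinate of $\tau - \mu_j$ equals $\tau_1 - \lambda_1 - p^r \leq d - p^r < 0$, whereas any non-negative $\mathbb{Z}$-combination of simple roots $\alpha_i = e_i - e_{i+1}$ has non-negative first coordinate. Hence $\tau \not\geq \mu_j$, and the vanishing follows.

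The main obstacle is really just the bookkeeping of the two nested filtrations: one must keep both $\Hom$ and $\Ext^1$ vanishings alive at each level so that they propagate correctly through the long exact sequences. The essential numerical input is $p^r > d$, which is the standing hypothesis of this section and which simultaneously makes $\mu_j$ non-dominant (killing the $\Hom$) and forces the first coordinate of $\tau - \mu_j$ to be negative (killing the $\Ext^1$). No spectral sequence machinery is needed for this lemma.
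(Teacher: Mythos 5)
Your proof is correct, but it takes a genuinely different route from the paper. The paper's proof is a one-line appeal to Andersen's Proposition~2.3 (a weight-theoretic vanishing criterion for $\Ext^1_B$), checking that for a weight $\tau = p^r e_j$ of $Q$ the shifted weight $\lambda + p^r - \tau$ is neither dominant nor of the form $s_\alpha\cdot\sigma$. You instead avoid Andersen's result entirely: you filter $Q$ by its one-dimensional weight subquotients $p^r e_j$ ($j \geq 2$), convert by tensor--Hom adjunction to $\Ext^1_B(H^0(d), \mu_j)$ with $\mu_j = \lambda + p^r - p^r e_j$, filter $H^0(d)$ by its $G$-composition factors $L(\tau)$, and kill each piece with Proposition~\ref{prop: degree1Bcoho}(a) via the first-coordinate computation $\tau_1 - (\lambda_1 + p^r) \leq d - p^r < 0$. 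This is more elementary and stays within the tools the paper already records, at the cost of a longer double-filtration bookkeeping. Both arguments hinge on the same numerical input $p^r > d$. Two small blemishes worth noting: (1) the $\Hom$ vanishings you prove are not actually needed --- since $\Hom_B(-, \lambda+p^r)$ is contravariant, the long exact sequence for $0 \to M' \to M \to M'' \to 0$ sandwiches $\Ext^1_B(M, \lambda+p^r)$ between $\Ext^1_B(M'', \lambda+p^r)$ and $\Ext^1_B(M', \lambda+p^r)$, so only the $\Ext^1$ vanishings propagate and that is enough; and (2) the phrase ``$B$-composition series of $H^0(d)$ with simple subquotients $L(\tau)$'' is imprecise, since the $L(\tau)$ are simple as $G$-modules, not as $B$-modules --- what you mean, and what makes the argument work, is the restriction to $B$ of a $G$-composition series of $H^0(d)$.
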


\begin{proof}
Consider $\tau=(0,0,\ldots, 0, p^r, 0 \ldots )$ a weight in $Q$. Since $p^r>d$ and $\lambda \vdash d$, it is clear that $\lambda+p^r-\tau$ is not dominant, nor is it of the form $s_\alpha . \sigma$ for $\alpha \in S$ and $\sigma \in X_+(T)$. The result follows from \cite[Prop. 2.3]{AndersenExtensionsofmodulesforalgebraicgroups}.
\end{proof}

We can now obtain a result which will imply the stability theorem.

\begin{thm}
    \label{thm: stabilityaddpforB}
$$\Ext^1_B(H^0(d) \otimes L(1)^{(r)}, \lambda+p^r) \cong \Ext^1_B(H^0(d), \lambda).$$
\end{thm}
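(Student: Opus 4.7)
The plan is to derive the desired isomorphism from the short exact sequence \eqref{eq: VtwistedresolutionasBmodule} of $B$-modules
$$0 \to Q \to L(1)^{(r)} \to (p^r,0,\ldots,0) \to 0$$
by tensoring with $H^0(d)$ (which is flat over $k$) and applying $\Hom_B(-,\lambda+p^r)$. This yields a long exact sequence
$$\cdots \to \Hom_B(H^0(d)\otimes Q, \lambda+p^r) \to \Ext^1_B(H^0(d)\otimes (p^r,0,\ldots,0), \lambda+p^r)$$
$$\to \Ext^1_B(H^0(d)\otimes L(1)^{(r)}, \lambda+p^r) \to \Ext^1_B(H^0(d)\otimes Q, \lambda+p^r).$$
The rightmost term is already known to vanish by Lemma \ref{lem:Bextvan}.

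To squeeze out a clean isomorphism I also need to show the leftmost $\Hom$ above vanishes. I will do this by a weight count: every weight of $Q$ is of the form $p^r e_j$ with $j \geq 2$ (since $L(1)^{(r)}$ has simple $B$-head $(p^r,0,\ldots,0)$, so $Q$ is spanned by the remaining weight vectors), and every weight of $H^0(d)$ is some $\beta \in B(d)$ with $\beta_1 \leq d < p^r$. Consequently every weight of $H^0(d)\otimes Q$ has first coordinate at most $d$, strictly less than $\lambda_1 + p^r$, so $(\lambda_1 + p^r, \lambda_2, \ldots, \lambda_n) = \lambda + p^r$ is not a weight of $H^0(d) \otimes Q$ at all, forcing
$$\Hom_B(H^0(d)\otimes Q, \lambda+p^r) = 0.$$
The long exact sequence thus collapses to
$$\Ext^1_B(H^0(d)\otimes L(1)^{(r)}, \lambda+p^r) \cong \Ext^1_B(H^0(d)\otimes (p^r,0,\ldots,0), \lambda+p^r).$$

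The last step is to strip off the one-dimensional $B$-module $(p^r,0,\ldots,0)$. Since tensoring a projective resolution by a one-dimensional module preserves projectivity and shifts weights, the standard identity
$$\Ext^1_B(M \otimes k_\nu, k_\mu) \cong \Ext^1_B(M, k_{\mu-\nu})$$
applied with $\nu = (p^r,0,\ldots,0)$ and $\mu = \lambda + p^r$ gives
$$\Ext^1_B(H^0(d)\otimes (p^r,0,\ldots,0), \lambda+p^r) \cong \Ext^1_B(H^0(d), \lambda),$$
which chained with the previous isomorphism finishes the proof. The only step with any real content is the Hom vanishing, and it reduces to the elementary weight inequality $\beta_1 \leq d < p^r$; everything else is a formal manipulation once Lemma \ref{lem:Bextvan} is in hand.
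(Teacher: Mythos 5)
Your proof is correct and follows exactly the same route as the paper's: tensor the short exact sequence defining $Q$ by $H^0(d)$, apply $\Hom_B(-,\lambda+p^r)$, kill the outer terms (the $\Hom$ by a weight count, the $\Ext^1$ by Lemma \ref{lem:Bextvan}), and then strip off the one-dimensional $B$-module $(p^r,0,\ldots,0)$ via the standard tensor identity. You merely spell out the weight-space vanishing and the final one-dimensional shift, both of which the paper leaves as brief remarks.
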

\begin{proof}
Take \eqref{eq: VtwistedresolutionasBmodule} and tensor it by $H^0(d)$ then apply $\Hom_B(-, \lambda+p^r)$ to obtain

\begin{equation}
    \label{eq: Stab2LES}
     \cdots \rightarrow \Hom_B(H^0(d) \otimes Q, \lambda+p^r) \rightarrow \Ext^1_B(H^0(d) \otimes (p^r,0, \ldots, 0),\lambda+p^r) \rightarrow\end{equation}
      \begin{equation*}
      \Ext^1_B(H^0(d) \otimes L(1)^{(r)}, \lambda+p^r) \rightarrow \Ext^1_B(H^0(d) \otimes Q, \lambda+p^r) \rightarrow \cdots
\end{equation*}

The left hand term in \eqref{eq: Stab2LES} is zero just by considering weight spaces. The right hand term is zero by  Lemma \ref{lem:Bextvan}, so we have the result.
\end{proof}

We can now obtain the main result of this section.

\begin{thm}
\label{thm: stabilityaddingpr}
Let $\lambda \vdash d$ and $p^r>d$. Then:

$$\HH^1(\Sigma_d, S^\lambda) = \HH^1(\Sigma_{d+p^r}, S^{\lambda+p^r}).$$

\end{thm}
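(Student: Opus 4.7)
The proof is essentially a short assembly of the ingredients accumulated through the section, so my plan is to simply chain together the isomorphisms that have already been established. First I will fix $n = d + p^r$ and work inside $G = GL_n(k)$ throughout, so that the same Borel $B$ computes the cohomology of both Specht modules via Proposition \ref{prop: frobenius reciprocityforH0lambda}(b) (note that $n \geq d$ and $1 \leq 2p - 4$ since $p \geq 3$, so the proposition applies on both sides).

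The idea is to translate each side to $B$-cohomology and then run the chain
\begin{align*}
\HH^1(\Sigma_{d+p^r}, S^{\lambda+p^r})
&\cong \Ext^1_B\bigl(H^0(d+p^r),\ \lambda+p^r\bigr) \\
&\cong \Ext^1_G\bigl(H^0(d+p^r),\ H^0(\lambda+p^r)\bigr) \\
&\cong \Ext^1_G\bigl(H^0(d)\otimes L(1)^{(r)},\ H^0(\lambda+p^r)\bigr) \\
&\cong \Ext^1_B\bigl(H^0(d)\otimes L(1)^{(r)},\ \lambda+p^r\bigr) \\
&\cong \Ext^1_B\bigl(H^0(d),\ \lambda\bigr) \\
&\cong \HH^1(\Sigma_d, S^\lambda).
\end{align*}

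The first and last isomorphisms are Proposition \ref{prop: frobenius reciprocityforH0lambda}(b). The second and fourth are Frobenius reciprocity, Proposition \ref{prop: frobenius reciprocityforH0lambda}(a). The third is Lemma \ref{lem: extagreeH0}, which is exactly the statement that replacing $H^0(d+p^r)$ by its $G$-submodule $H^0(d) \otimes L(1)^{(r)}$ does not affect $\Ext^1$ into $H^0(\lambda+p^r)$; this rests on the cokernel analysis in Lemma \ref{lem: injectionH0withuntwistedcokerneL} together with the standard Ext-vanishing for $H^0(\lambda+p^r)$ (Lemma \ref{lem: extvanish}). The crucial fifth isomorphism is precisely Theorem \ref{thm: stabilityaddpforB}, which used the short exact sequence \eqref{eq: VtwistedresolutionasBmodule} for $L(1)^{(r)}$ as a $B$-module and the vanishing Lemma \ref{lem:Bextvan}.

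There is no serious obstacle beyond correctly threading the ambient group: one only needs to verify that the same $n$ works at both ends, and to observe that each rewrite only invokes tools already in place. Since both the $G$-level substitution of $H^0(d+p^r)$ by $H^0(d)\otimes L(1)^{(r)}$ and the $B$-level substitution of $L(1)^{(r)}$ by its top weight $(p^r, 0, \ldots, 0)$ have been handled in the preceding lemmas, the theorem now follows immediately by reading the chain of isomorphisms above.
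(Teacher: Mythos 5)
Your proof is correct and follows essentially the same route as the paper's: equate both symmetric group cohomology groups with $B$-cohomology via Proposition \ref{prop: frobenius reciprocityforH0lambda}(b), substitute $H^0(d)\otimes L(1)^{(r)}$ for $H^0(d+p^r)$ using Lemma \ref{lem: extagreeH0}, and finish with Theorem \ref{thm: stabilityaddpforB}. The only cosmetic differences are the direction of the chain and that you unpack the two applications of Frobenius reciprocity explicitly rather than bundling them into a single step; fixing $n = d + p^r$ throughout is a sensible clarification.
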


\begin{proof}
We have:
\begin{eqnarray*}
  \HH^1(\Sigma_d, S^\lambda) &\cong& \Ext^1_B(H^0(d), \lambda) \text{ by Proposition \ref{prop: KNresultequatescohSpectwithGLd}} \\
   &\cong& \Ext^1_B(H^0(d) \otimes L(1)^{(r)}, \lambda+p^r) \text{ by Theorem \ref{thm: stabilityaddingpr}} \\
   &\cong& \Ext^1_B(H^0(d+p^r), \lambda+p^r) \text{ by Lemma \ref{lem: extagreeH0} and Frobenius reciprocity.}\\
   & \cong & \HH^1(\Sigma_{d+p^r}, S^{\lambda+p^r}).
\end{eqnarray*}

\end{proof}

\section{Open problems and future directions}
\subsection{}
We suspect these results are the tip of a large iceberg of ``generic cohomology" type theorems for the symmetric group. In a recent paper \cite{CohenHemmerNakanoYoungmodulespreprint} of the author with Cohen and Nakano we proved generic cohomology results for Young modules $Y^\lambda$, specifically that for each $i \geq 0$, the cohomology groups $$\HH^i(\Sigma_{p^ad}, Y^{p^a\lambda})$$ stabilize for $a$ large enough (depending on $i$).

One question is whether our stability result can be explicitly realized, i.e.

\begin{prob} Given an element $0 \rightarrow S^{p\lambda} \rightarrow M \rightarrow k \rightarrow 0$ in $\HH^1(\Sigma_{pd}, S^{p\lambda})$, can one explicitly construct an extension of $S^{p^2\lambda}$ by $k$ realizing the isomorphism in Theorem \ref{thm: genericcohoSpechtdegree1}.
\end{prob}

Another obvious question is whether there is generic cohomology in all degrees. With our evidence from degrees zero and one we conjecture:

\begin{conj}
\label{prob: genericcohoinalldegrees}
Fix $i>0$. There is constant $c(i)$ such that for any $d$ with $\lambda \vdash d$ and any $a \geq c(i)$ that
$$\HH^i(\Sigma_{p^ad}, S^{p^a\lambda}) \cong \HH^i(\Sigma_{p^{a+1}d}, S^{p^{a+1}\lambda}).$$
\end{conj}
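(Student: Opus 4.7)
The plan is to upgrade the proof of Theorem \ref{thm: genericcohoSpechtdegree1} from degree one to arbitrary degree $i$ by staying in $B$-cohomology. The first ingredient needed is a translation from $\HH^i(\sd, S^\lambda)$ to an $\Ext^i_B$-group. Proposition \ref{prop: KNresultequatescohSpectwithGLd} only covers $0 \leq i \leq 2p-4$, so for general $i$ one must either use a shifted Hemmer-Nakano variant (passing through $B$-cohomology in degree $i + \binom{d}{2}$) or bootstrap from the generic cohomology of Young modules established in \cite{CohenHemmerNakanoYoungmodulespreprint} by using the Specht filtration of each $Y^{p^a\lambda}$ and running a long-exact-sequence induction to pass the stability from $Y^{p^a\lambda}$ to $S^{p^a\lambda}$.

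Assuming such a translation, the second ingredient is the Doty embedding: Proposition \ref{prop: H0twistedintoH0pd} applied with $d$ replaced by $p^ad$ gives a short exact sequence
$$0 \to H^0(p^ad)^{(1)} \to H^0(p^{a+1}d) \to Q_a \to 0$$
in which $Q_a$ carries no composition factor of the form $L(p\tau)$. The induced long exact sequence reduces the problem to showing $\Ext^j_B(Q_a, p^{a+1}\lambda) = 0$ for $j \in \{i, i+1\}$, the direct-degree-$i$ analogue of Lemma \ref{lem:B1cohovanish} and Proposition \ref{prop:ext2withQ=0}. Applying the Lyndon-Hochschild-Serre spectral sequence \eqref{eq: LHSspectralsequence} for $B_1 \lhd B$ to each composition factor $L(\mu) = L(\mu_{(0)}) \otimes L(\tau)^{(1)}$ of $Q_a$ (with $\mu_{(0)} \neq 0$), one must force vanishing of every term
$$\Ext^s_{B/B_1}\bigl(L(p\tau) \otimes (-p^{a+1}\lambda),\, \Ext^t_{B_1}(L(\mu_{(0)}), k)\bigr), \qquad s + t \leq i+1.$$
The Andersen-Jantzen description of $\HH^*(B_1, k)$ bounds the weights of $\Ext^t_{B_1}(L(\mu_{(0)}), k)$ in terms of $t$ alone; once $a$ is large enough that $p^{a+1}\lambda$ dominates all possible weight shifts (the weight-comparison proof that drove Corollary \ref{cor:weightnotinsteinberg} and Lemmas \ref{lem: E02=0}, \ref{lem: E11=0}), every such $E_2$ term vanishes.

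The third ingredient is the Frobenius-twist spectral sequence \eqref{eq: spectralsequencetwistingcoho} with $r = 1$, $M_1 = H^0(p^ad)$, $M_2 = p^a\lambda$:
$$E_2^{s,t} = \Ext^s_B\bigl(H^0(p^ad),\, p^a\lambda \otimes \HH^t(B_1, k)^{(-1)}\bigr) \Rightarrow \Ext^{s+t}_B\bigl(H^0(p^ad)^{(1)}, p^{a+1}\lambda\bigr).$$
The $E_2^{i,0}$ edge is $\Ext^i_B(H^0(p^ad), p^a\lambda) \cong \HH^i(\Sigma_{p^ad}, S^{p^a\lambda})$, and the map we want to be an isomorphism is the edge map into $\HH^i(\Sigma_{p^{a+1}d}, S^{p^{a+1}\lambda})$. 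So we need every $E_2^{s,t}$ with $t > 0$ and $s + t \in \{i-1, i, i+1\}$ to vanish. Since the weights of $\HH^t(B_1, k)$ are bounded for each fixed $t \leq i + 1$, for $a$ sufficiently large the weight $p^a\lambda$ overshoots every contribution these groups can make, and the edge map is forced to be an isomorphism. Defining $c(i)$ to be the maximum of the thresholds arising in Steps 2 and 3 would complete the argument.

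The principal obstacle is Step 1. The Kleshchev-Nakano translation is the essential starting point for the entire $B$-cohomology strategy, and no clean all-degree analogue is known; the shifted Hemmer-Nakano identification involves very high-degree $B$-cohomology that is outside current computational reach. The Young-module route via \cite{CohenHemmerNakanoYoungmodulespreprint} is natural but passes through a Specht filtration with many extensions that must each be controlled uniformly in $\lambda$, and it is precisely this uniformity (so that $c(i)$ depends only on $i$) that is delicate. Once past Step 1, the weight-comparison arguments of Steps 2 and 3 are structurally parallel to the degree-one case, although the combinatorial bookkeeping over all weights of $\Ext^t_{B_1}(L(\mu_{(0)}), k)$ and $\HH^t(B_1, k)$ for $t$ up to $i+1$ grows rapidly with $i$.
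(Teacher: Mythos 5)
This statement is an open conjecture in the paper, not a theorem: the paper offers no proof, only the remark that the main obstacle to extending the degree-one argument is that Proposition~\ref{prop: KNresultequatescohSpectwithGLd} only identifies $\HH^i(\sd,S^\lambda)$ with $\Ext^i_{GL_d(k)}(H^0(d),H^0(\lambda))$ through degree $2p-4$. Your Step~1 is exactly that obstacle, and you are right to flag it as the principal gap; without a translation of $\HH^i(\sd,S^\lambda)$ into $B$-cohomology in degree $i$, the entire strategy has no starting point, and neither the shifted Hemmer--Nakano identification in degree $i+\binom{d}{2}$ (which is computationally inaccessible) nor the Young-module bootstrap (which requires uniform control of all the other Specht modules in the filtration, with a threshold $c(i)$ independent of $\lambda$) has been made to work. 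So what you have written is an outline of a plausible attack, not a proof, and you say as much.

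Two small points on the extrapolation itself. In Step~3 the spectral sequence~\eqref{eq: spectralsequencetwistingcoho} with $M_1=H^0(p^ad)$, $M_2=p^a\lambda$ abuts to $\Ext^{s+t}_B(H^0(p^ad)^{(1)}, p^{a+1}\lambda)$, not directly to $\HH^i(\Sigma_{p^{a+1}d},S^{p^{a+1}\lambda})$; you still need Step~2's Doty embedding to exchange $H^0(p^ad)^{(1)}$ for $H^0(p^{a+1}d)$, so the two steps are sequential, not alternatives, exactly as in the chain of isomorphisms in Theorem~\ref{thm: extinjectionontwistedSpecht}. Also, to isolate the edge map $E_2^{i,0}\to H^i$ as an isomorphism you must kill $E_2^{s,t}$ for $t>0$ with $s+t=i$ (to get surjectivity onto $E_\infty^{i,0}=H^i$) and the differentials landing on $E_r^{i,0}$ from $E_r^{i-r,r-1}$ (to get injectivity), which is a slightly different range than the $s+t\in\{i-1,i,i+1\}$ you wrote; the paper's own Proposition~\ref{prop:Ext2agreesH0twisttwice} handles the $i=2$ case by killing $E_2^{1,1}$, $E_2^{0,2}$ and then explicitly checking the differential out of $E_2^{0,1}$, and already needs an extra twist on both sides, confirming your expectation that $c(i)$ must grow. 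The weight-comparison machinery (Lemmas~\ref{lem: weightsbig}--\ref{lem: E11=0}) does extrapolate in spirit to bounding weights of $\Ext^t_{B_1}(L(\mu_{(0)}),k)$ by iterated Steinberg tensor powers, and the resulting threshold would depend only on $i$ and $p$; but with Step~1 unresolved this remains a conjecture.
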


So $c(0)=c(1)=1$. One obstacle to generalizing our proof is that Proposition \ref{prop: KNresultequatescohSpectwithGLd}(ii) only holds through degree $2p-4$. There is certainly evidence that for larger $i$ one must ``twist" more times before stability for $\HH^i$ begins. For example we have the following analogue of Lemma \ref{lem: Bcohooftwsit agrees}.

\begin{prop}
\label{prop:Ext2agreesH0twisttwice}
Let $\mu \vdash d$. Then
$$\Ext^2_B(H^0(d)^{(1)}, p\mu) \cong \Ext^2_B(H^0(d)^{(2)}, p^2\mu).$$
\end{prop}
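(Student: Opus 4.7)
The approach parallels the proof of Lemma \ref{lem: Bcohooftwsit agrees}, shifted one Ext-degree higher. I apply the spectral sequence \eqref{eq: spectralsequencetwistingcoho} with $r=1$, $M_1 = H^0(d)^{(1)}$, and $M_2 = p\mu$, to obtain
$$E_2^{i,j} = \Ext^i_B(H^0(d)^{(1)},\, p\mu \otimes \HH^j(B_1,k)^{(-1)}) \Rightarrow \Ext^{i+j}_B(H^0(d)^{(2)},\, p^2\mu).$$
The term $E_2^{2,0}$ is the left-hand side of the proposition, and the abutment in total degree $2$ is the right-hand side; so it suffices to show $E_\infty^{2,0} = E_2^{2,0}$ together with $E_\infty^{1,1} = E_\infty^{0,2} = 0$, so that the filtration on the abutment collapses onto the $E^{2,0}$ corner.

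Because $\HH^1(B_1,k)=0$ by Proposition \ref{prop: degree1Bcoho}(b), the entire row $j=1$ of $E_2$ vanishes. This forces $E_2^{1,1}=0$ and annihilates the only differential that could enter position $(2,0)$ from within the first quadrant, namely $d_2 \colon E_2^{0,1} \to E_2^{2,0}$; since no differential can leave $(2,0)$ into the first quadrant either, one concludes $E_\infty^{2,0} = E_2^{2,0}$. What remains is to prove $E_2^{0,2} = \Hom_B(H^0(d)^{(1)},\, p\mu \otimes \HH^2(B_1,k)^{(-1)}) = 0$.

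For this, observe that every weight of $H^0(d)^{(1)}$ lies in $pX(T)$, so a weight $p\mu + \nu$ of $p\mu \otimes \HH^2(B_1,k)^{(-1)}$ can match one of $H^0(d)^{(1)}$ only if $\nu \in pX(T)$; equivalently, every nonzero weight of $\HH^2(B_1,k)$ must avoid $p^2 X(T)$. For $p$ large relative to the rank, a Koszul-type description of $\HH^*(B_1,k)$ exhibits the degree-$2$ weights explicitly as $p(\alpha + \beta)$ with $\alpha, \beta$ positive roots, and since each coordinate of $\alpha + \beta$ has absolute value at most $2 < p$, no such weight lies in $p^2 X(T)$. For small primes one must fall back on the more delicate Andersen--Jantzen description of $\HH^2(B_1,k)$ and verify the same bound for the additional cohomology classes.

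This last weight analysis is the main obstacle --- the spectral sequence bookkeeping is routine. Once the bound is in hand, the chain
$$\Ext^2_B(H^0(d)^{(2)},\, p^2\mu) \;\cong\; E_\infty^{2,0} \;=\; E_2^{2,0} \;=\; \Ext^2_B(H^0(d)^{(1)},\, p\mu)$$
delivers the claimed isomorphism.
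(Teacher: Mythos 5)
Your spectral sequence bookkeeping is identical to the paper's: same choice of $r=1$, $M_1 = H^0(d)^{(1)}$, $M_2 = p\mu$; same three things to check ($E_2^{1,1}=0$ from $\HH^1(B_1,k)=0$; $E_2^{0,2}=0$; and the differential $d_2\colon E_2^{0,1}\to E_2^{2,0}$ vanishes, again from $\HH^1(B_1,k)=0$). You have correctly identified that the whole burden falls on showing $E_2^{0,2}=0$, which amounts to knowing the weights of $\HH^2(B_1,k)$ avoid $p^2X(T)$.

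Where you part company with the paper is in how you justify that weight claim, and your specific description is off. You assert the degree-two weights are $p(\alpha+\beta)$ for $\alpha,\beta$ positive roots; but for $p>h$ the polynomial description $\HH^{2i}(B_1,k)\cong S^i(\mathfrak u^*)^{(1)}$ gives $\HH^2(B_1,k)\cong(\mathfrak u^*)^{(1)}$, whose weights are $p\alpha$ for \emph{single} positive roots $\alpha$ --- sums $p(\alpha+\beta)$ only appear in degree four. More to the point, the paper does not split cases on the size of $p$: it invokes the Bendel--Nakano--Pillen computation (\cite[Thm.~5.3]{BNPSecondcohomologygroupsforFrobeniuskernels}), valid for all $p\geq 3$, from which the weights of $\HH^2(B_1,k)^{(-1)}$ are $-\alpha$ with $\alpha$ a \emph{simple} root; that these are nonzero and not of the form $p\sigma$ is immediate. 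Your ``for small primes one must fall back on Andersen--Jantzen'' leaves exactly the case $3\leq p<h$ incomplete, which is the generic situation since $d$ (hence the rank) can be arbitrary. So the route is the same, but you should replace the imprecise weight list with the BNP citation to make the $E_2^{0,2}$ vanishing watertight; the conclusion you draw from it is nevertheless correct.
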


\begin{proof}
Consider the spectral sequence \eqref{eq: spectralsequencetwistingcoho} for the case  $r=1$, $M_1=H^0(d)^{(1)}$ and $M_2=p\mu$. The $E_2^{2,0}$ term is $\Ext^2_B(H^0(d)^{(2)}, p^2\mu)$, which we must  show is equal to $\Ext^2_B(H^0(d)^{(1)}, p\mu).$ The $E_2^{1,1}$ term is zero, as $\HH^1(B_1, k)=0$ as mentioned in the proof of Lemma \ref{lem: Bcohooftwsit agrees}. The $E_2^{0,2}$ term is
$$E_2^{0,2} \cong \Hom_B(H^0(d)^{(1)}, p\mu \otimes \HH^2(B_1, k)^{(-1)}).$$ The cohomology $\HH^2(B_1, k)$ was computed in \cite[Thm 5.3]{BNPSecondcohomologygroupsforFrobeniuskernels}. From this calculation one can see that the weights in $\HH^2(B_1, k)^{(-1)}$ are all of the form $-\alpha$ for $\alpha \in S$, in particular they are not of the form $p\sigma$. But every weight in $H^0(d)^{(1)} \otimes (-p\mu)$ is of this form, so the $E_2^{0,2}$ term is zero.

Finally observe that:

$$E_2^{0,1}=\Hom_B(H^0(d)^{(1)}, p\mu \otimes H^1(B_1, k)^{(-1)}) =0$$ by Proposition \ref{prop: degree1Bcoho}(b). Thus the differential joining $E_2^{0,1}$ to $E_2^{2,0}$ is zero, so $$E_\infty^{2,0}=E_2^{2,0}=\Ext^2_B(H^0(d)^{(1)}, p\mu)$$ as desired.
\end{proof}

\begin{rem}The additional twist going from Lemma \ref{lem: Bcohooftwsit agrees} to Proposition \ref{prop:Ext2agreesH0twisttwice} is indeed necessary. Without it, in the proof above  the $E_2^{0,2}$ term may be nonzero. For example if $p=5$ and $\mu=(50,25)$ then $\Hom_B(H^0(75), \mu \otimes (-\alpha_1) ) \neq 0.$ Further, observe that Proposition \ref{prop:Ext2agreesH0twisttwice} required information about  degree two $B_1$ cohomology. This certainly suggests further calculations of $\HH^i(B_r, k)$ may be of use in studying Specht module cohomology.

\end{rem}

\subsection{}
It would be nice to have a proof of Proposition \ref{prop: Homneq0impliesExt1neq0} using our approach, or perhaps more generally for any composition factor $L(\lambda)$ in $H^0(d)$. We conjecture:

\begin{conj}
\label{conj:inH0d meansextisnonzero}
Suppose $[H^0(d) : L(\lambda)] \neq 0$ for $\lambda \neq (d)$. Then $\HH^1(\Sigma_d, S^\lambda) \neq 0$.
\end{conj}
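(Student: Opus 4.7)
The plan is to reduce via Propositions \ref{prop: KNresultequatescohSpectwithGLd} and \ref{prop: frobenius reciprocityforH0lambda}(b) to showing $\Ext^1_B(H^0(d),\lambda)\neq 0$, and then split into two cases according to whether $\Hom_B(H^0(d),\lambda)$ vanishes. If $\Hom_B(H^0(d),\lambda)\neq 0$, equivalently $\HH^0(\sd,S^\lambda)\neq 0$, then the assumption $\lambda\neq (d)$ together with Proposition \ref{prop: Homneq0impliesExt1neq0} instantly delivers $\HH^1(\sd,S^\lambda)\neq 0$.

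In the remaining case $\Hom_B(H^0(d),\lambda)=0$, Remark \ref{rem: James result comesfromDoty} supplies a partition $\mu\rhd\lambda$ with $[H^0(d):L(\mu)]=1$ and $c(\mu)>c(\lambda)$, along with the crucial structural input that the $\lambda$-weight vector $v_\lambda$ lies in the $B$-submodule $M_\mu=B\cdot v_\mu$ generated by the $\mu$-weight vector. Setting $M_\lambda=B\cdot v_\lambda\subseteq M_\mu$, I would apply $\Hom_B(-,\lambda)$ to
\[
0\to M_\lambda\to H^0(d)\to H^0(d)/M_\lambda\to 0.
\]
Since $\Hom_B(H^0(d),\lambda)=0$ while $\Hom_B(M_\lambda,\lambda)=k$ via the canonical projection of $M_\lambda$ onto its $B$-head $\lambda$, the connecting map produces a nonzero class in $\Ext^1_B(H^0(d)/M_\lambda,\lambda)$. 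By construction, however, this class lies in the kernel of the subsequent map $\Ext^1_B(H^0(d)/M_\lambda,\lambda)\to\Ext^1_B(H^0(d),\lambda)$, so it alone does not prove the conjecture. One would therefore try to exhibit a \emph{second}, independent element of $\Ext^1_B(H^0(d)/M_\lambda,\lambda)$ that survives to $\Ext^1_B(H^0(d),\lambda)$, using Doty's explicit description of the $B$-action on $H^0(d)$ to produce additional extensions involving composition factors $L(\nu)$ lying ``above'' $L(\lambda)$, and controlling the potential obstructions in $\Ext^1_B(M_\lambda,\lambda)$ and the relevant $\Ext^2_B$-groups via Proposition \ref{prop: degree1Bcoho}(c) and the $B_r$-cohomology spectral sequence machinery from Sections \ref{sec: B-cohomologyandspectralsequences} and \ref{section: Generic cohomology for Specht modules}.

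The main obstacle is exactly this second step. The non-split extension produced by $v_\lambda\in B\cdot v_\mu$ is ``designed to die'' under restriction to $\Ext^1_B(H^0(d),\lambda)$, and producing an independent surviving class---or equivalently, showing directly that $\dim\Ext^1_B(H^0(d)/M_\lambda,\lambda)\geq 2$---appears to demand genuinely new input. That the author records the statement as a conjecture rather than a theorem signals that no direct adaptation of the techniques already developed in the paper is likely to suffice: progress may require either a universal-coefficients argument in the spirit of Andersen's Proposition \ref{prop: Homneq0impliesExt1neq0}, or a more sophisticated exploitation of Doty's submodule lattice combined with the $\Ext^1_B$ vanishing results of Proposition \ref{prop: degree1Bcoho}.
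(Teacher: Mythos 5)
The statement you were asked to prove is recorded in the paper as a \emph{Conjecture}, not a theorem: the author leaves it open, remarking only that it ``is a strengthening of Proposition~\ref{prop: Homneq0impliesExt1neq0}.'' There is therefore no paper proof to compare against, and your own closing assessment---that the argument is incomplete and the statement likely requires new input---is accurate and is the honest deliverable here.

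Your case split is the right frame. Case~1 is correct but not new: when $\Hom_B(H^0(d),\lambda)\neq 0$, the claim is exactly Proposition~\ref{prop: Homneq0impliesExt1neq0}, which the paper already records (and which is what the author means by the conjecture being a ``strengthening''). The real content of the conjecture is your Case~2, where $[H^0(d):L(\lambda)]\neq 0$ but $\Hom_B(H^0(d),\lambda)=0$, and here you identify the obstruction correctly but do not overcome it. The connecting map arising from $0\to M_\lambda\to H^0(d)\to H^0(d)/M_\lambda\to 0$ does yield a nonzero class in $\Ext^1_B(H^0(d)/M_\lambda,\lambda)$, but exactness forces that class to die in $\Ext^1_B(H^0(d),\lambda)$, as you note. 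The step of exhibiting a second, surviving class (equivalently, bounding $\dim\Ext^1_B(H^0(d)/M_\lambda,\lambda)\geq 2$ and controlling the image) is not carried out and is precisely where the conjecture lives; one should also be wary that $\Hom_B(M_\lambda,\lambda)\cong k$ requires justification, since the $B$-head of the cyclic module $M_\lambda=B\cdot v_\lambda$ need not a priori be simple. In short: this is a sound diagnosis of the difficulty, not a proof, and it should not be presented as establishing the conjecture.
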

Note that Conjecture \ref{conj:inH0d meansextisnonzero} is a strengthening of Proposition \ref{prop: Homneq0impliesExt1neq0}. More generally the problem of computing $\HH^1$ is still open.

\begin{prob}
\label{prob:computingH1specht} For which $\lambda \vdash d$ is $\HH^1(\Sigma_d, S^\lambda)$ nonzero? Is it at most one-dimensional?
\end{prob}

There is another type of ``stability" which occurs, at least for $\HH^0(\sd, S^\lambda)$. Namely the following is an easy consequence of Theorem \ref{thm:JamestheoremonHom}:

\begin{lem}
Suppose $\lambda=(\lambda_1, \lambda_2, \ldots, \lambda_s)\vdash d$ and suppose $a \equiv -1$ mod $p^{l_p(\lambda_1)}$. Then
\begin{equation}
\label{eq:Homstabilizesaddingcong-1asnewfirstrow}
\HH^0(\Sigma_d, S^\lambda) \cong \HH^0(\Sigma_{d+a}, S^{(a,\lambda_1, \lambda_2, \ldots, \lambda_s)}).
\end{equation}
\end{lem}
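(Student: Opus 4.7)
My plan is to deduce this directly from James' criterion (Theorem \ref{thm:JamestheoremonHom}) by checking that the criterion is satisfied for $(\lambda_1,\ldots,\lambda_s)$ if and only if it is satisfied for $(a,\lambda_1,\ldots,\lambda_s)$.

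First I would verify that $(a,\lambda_1,\ldots,\lambda_s)$ is a legitimate partition, i.e.\ that $a\geq \lambda_1$. Since $l_p(\lambda_1)=m$ means $\lambda_1<p^m$, we have $\lambda_1\leq p^m-1$; and the hypothesis $a\equiv -1\pmod{p^m}$ together with $a>0$ forces $a\geq p^m-1\geq \lambda_1$. So the tuple $(a,\lambda_1,\ldots,\lambda_s)$ is indeed a partition of $d+a$.

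Next, write out James' condition for each side. For $\lambda=(\lambda_1,\ldots,\lambda_s)$, non-vanishing of $\HH^0(\Sigma_d,S^\lambda)$ requires
\[
\lambda_i\equiv -1\pmod{p^{l_p(\lambda_{i+1})}}\qquad\text{for }i=1,\ldots,s-1.
\]
For $(a,\lambda_1,\ldots,\lambda_s)\vdash d+a$, non-vanishing of $\HH^0(\Sigma_{d+a},S^{(a,\lambda_1,\ldots,\lambda_s)})$ requires both $a\equiv -1\pmod{p^{l_p(\lambda_1)}}$ (which is exactly the hypothesis of the lemma) and the same list of congruences $\lambda_i\equiv -1\pmod{p^{l_p(\lambda_{i+1})}}$ for $i=1,\ldots,s-1$. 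Thus the two criteria coincide under the standing hypothesis on $a$.

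Consequently, either both cohomology groups vanish, or both satisfy James' criterion, in which case Theorem \ref{thm:JamestheoremonHom} tells us each is one-dimensional, and we obtain the isomorphism \eqref{eq:Homstabilizesaddingcong-1asnewfirstrow}. There is no real obstacle here; the only mild subtlety is checking that $a\geq \lambda_1$ so that $(a,\lambda_1,\ldots,\lambda_s)$ is a partition, which as shown above follows automatically from the congruence hypothesis.
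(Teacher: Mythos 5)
Your proposal is correct and is exactly the argument the paper has in mind; the paper states the lemma as an immediate consequence of Theorem \ref{thm:JamestheoremonHom} and omits the verification entirely. Your check that the congruence hypothesis forces $a \geq p^{l_p(\lambda_1)}-1 \geq \lambda_1$, so that $(a,\lambda_1,\ldots,\lambda_s)$ is a genuine partition, is a worthwhile detail to record.
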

This leads to the following

\begin{prob}
\label{prob:addingfirstrow-1stabilityonext}
Does the isomorphism in \eqref{eq:Homstabilizesaddingcong-1asnewfirstrow} hold for $\HH^i$ for any other $i>0$?
\end{prob}

\subsection{}Finally we ask for stronger results like that of Theorem \ref{thm: stabilityaddingpr}.

\begin{prob}
\label{prob: resultsaddingprmu}
Let $\lambda \vdash d$ and $\mu \vdash c$. Can one find more results that related the cohomology $\HH^i(\Sigma_d, S^\lambda)$ and $\HH^i(\Sigma_{d+cp^r}, S^{\lambda + p^r\mu})$?
\end{prob}

\bibliography{references0308}
\bibliographystyle{plain}
\end{document}